\colorlet{darkred}{red!85!black}
\colorlet{darkblue}{blue!70!black}
\theoremstyle{plain}
\newtheorem{thm}{Theorem}[section]
\newtheorem{thm-int}{Theorem}
\newtheorem{lem}[thm]{Lemma}
\newtheorem{cor}[thm]{Corollary}
\newtheorem{prop}[thm]{Proposition}
\theoremstyle{definition}
\newtheorem{defn}[thm]{Definition}	
\newtheorem{example}[thm]{Example}
\newtheorem{rmk}[thm]{Remark}
\def\nat{\mathbb N}
\def\integ{\mathbb Z}
\def\real{\mathbb R}
\def\eps{\varepsilon}
\DeclareMathOperator\xLone{\text{L}^1}
\def\lcont{\xLone([0,T],\real^m)}
\def\lcontdue{\xLone([0,T],\real^{m+1})}
\DeclareMathOperator\xCinfty{\text{C}^\infty}
\DeclareMathOperator{\Span}{span}
\def\metr{\bf{g}}
\DeclareMathOperator{\ord}{ord}
\def\bigo{\mathcal O}
\def\drift{f_0}
\def\cost{\text{J}}
\DeclareMathOperator\val{V}
\DeclareMathOperator\tube{Tube}
\DeclareMathOperator\length{length}
\DeclareMathOperator\bsr{B_{SR}}
\DeclareMathOperator\dsrs{d_{SR}}
\newcommand{\bdt}[2]{\mathcal R^{\drift}_T(#1,#2)}
\DeclareMathOperator\distr{\Delta}
\newcommand{\bbox}[1]{\mathrm{Box}\left(#1\right)}
\DeclareMathOperator\maxtime{\mathcal T}
\DeclareMathOperator\maxtimectime{\delta_0}
\def\contset{{\mathcal U_{\maxtime}}}
\newcommand\contsett[1]{{\mathcal U_{#1}}}
\def\costuno{{\mathcal J}}
\def\costdue{{\mathcal I}}
\DeclareMathOperator\ccost{\Sigma_{int}}
\DeclareMathOperator\ccostcostuno{\Sigma_{int}^{\costuno}}
\DeclareMathOperator\ccostcostdue{\Sigma_{int}^{\costdue}}
\DeclareMathOperator\ctime{\sigma_{int}}
\DeclareMathOperator\ctimecostuno{\sigma_{int}^{\costuno}}
\DeclareMathOperator\ctimecostdue{\sigma_{int}^{\costdue}}
\DeclareMathOperator\ctimeaux{\omega}
\DeclareMathOperator\ctimeauxcostuno{\omega^{\costuno}}
\DeclareMathOperator\ctimeauxcostdue{\omega^{\costdue}}
\DeclareMathOperator\cneig{\sigma_{app}}
\DeclareMathOperator\cneigcostuno{\sigma_{app}^{\costuno}}
\DeclareMathOperator\cneigcostdue{\sigma_{app}^{\costdue}}
\DeclareMathOperator\capp{\Sigma_{app}}
\DeclareMathOperator\cappcostuno{\Sigma_{app}^{\costuno}}
\DeclareMathOperator\cappcostdue{\Sigma_{app}^{\costdue}}
\DeclareMathOperator\ltlctime{LTLC_{time}}
\DeclareMathOperator\ltlcneig{LTLC_{cost}}
\DeclareMathOperator\srsccost{\Sigma_{int}^{\text {SR-s}}}
\DeclareMathOperator\srsctime{\sigma_{int}^{\text {SR-s}}}
\DeclareMathOperator\srscapp{\Sigma_{app}^{\text {SR-s}}}
\DeclareMathOperator\srbccost{\Sigma_{int}^{\text {SR-b}}}
\DeclareMathOperator\srbctime{\sigma_{int}^{\text {SR-b}}}
\DeclareMathOperator\srbcneig{\sigma_{app}^{\text {SR-b}}}
\DeclareMathOperator\srbcapp{\Sigma_{app}^{\text {SR-b}}}
\def\crex{\overrightarrow{\text{exp}}\int}
\def\vett{\{f_1,\ldots,f_m\}}
\def\Lie{\text{Lie}}
\title[Complexity of control-affine motion planning]{Complexity of control-affine motion planning}
	\thanks{This work was  supported by the European Research Council, ERC StG 2009 ``GeCoMethods'', contract number 239748; by the ANR project {\it GCM}, program ``Blanche'', project number NT09\_504490; and by the Commission of the European Communities under the 7th Framework Programme Marie Curie Initial Training Network (FP7-PEOPLE-2010-ITN), project SADCO, contract number 264735}
	\author{F. Jean$^*$}%
		\address{$^*$ ENSTA ParisTech, 828 Boulevard des Mar\'echaux
		91762 Palaiseau, France,  and Team  GECO, INRIA Saclay -- \^Ile-de-France,}
       \email{frederic.jean@ensta-paristech.fr}%
    \author{D. Prandi$^\dagger$$^\ddagger$}
		\address{$^\dagger$ Laboratoire LSIS, Université de Toulon, France, and Team GECO, INRIA-Centre de Recherche Saclay,}
  		\email{prandi@univ-tln.fr}%
    	\address{$^\ddagger$SISSA, Trieste, Italy}
\begin{document}

\maketitle

\begin{abstract}
In this paper we study the complexity of the motion planning problem for control-affine systems. Such complexities are already defined and rather well-understood in the particular case of nonholonomic (or sub-Riemannian) systems. Our aim is to generalize these notions and results to systems with a drift. Accordingly, we present various definitions of complexity, as functions of the curve that is approximated, and of the precision of the approximation. Due to the lack of time-rescaling invariance of these systems, we consider geometric and parametrized curves separately. Then, we give some asymptotic estimates for these quantities.
\end{abstract}

\noindent
\textbf{Key words:} control-affine systems, sub-Riemannian geometry, motion planning, complexity.

\smallskip
\noindent
\textbf{2010 AMS subject classifications:} 53C17, 93C15.


\section{Introduction}

The concept of complexity was first developed for the non-holonomic motion planning problem in robotics.
Given a control system on a manifold $M$, the motion planning problem consists in finding an admissible trajectory connecting two points, usually under further requirements, such as obstacle avoidance.
If a cost function is given, it makes sense to try to find the trajectory costing the least.

Different approaches are possible to solve this problem (see \cite{laumond98}). 
Here we focus on those based on the following scheme:
\begin{enumerate}
	\label{scheme}
	\item find any (usually non-admissible) curve or path solving the problem,
	\item approximate it with admissible trajectories. 
\end{enumerate}
The first step is independent of the control system, since it depends only on the topology of the manifold and of the obstacles, and it is already well understood (see \cite{Schwartz1983}).
Here, we are interested in the second step, which depends only on the local nature of the control system near the path.
The goal of the paper is to understand how to measure the complexity of the approximation task.
By complexity we mean a function of the non-admissible curve $\Gamma\subset M$ (or path $\gamma:[0,T]\to M$), and of the precision of the approximation, quantifying the difficulty of the latter by means of the cost function.

\subsection{Control theoretical setting} 
\label{sub:control_theoretical_setting}


In this paper, we consider a control-affine system on a smooth manifold $M$ is a control system in the form
\begin{equation}\label{int:d}
	\dot q(t) = f_0(q(t))+ \sum_{i=1}^m u_i(t) \,f_i(q(t)),\qquad \text{a.e. }t\in[0,T],
\end{equation}
where $u:[0,T]\rightarrow\real^m$ is an integrable control function and $\drift,f_1,\ldots, f_m$ are (not necessarily linearly independent) smooth vector fields.
The uncontrolled vector field $\drift$ is called the \em drift\em.
These kind of systems appear in plenty of applications. 
As examples we cite: mechanical systems with controls on the acceleration (see e.g., \cite{Bullo2004,Sigalotti2010}), where the drift is the velocity, quantum control (see e.g., \cite{D'Alessandro2008,Boscain2006}), where the drift is the free Hamiltonian, or the swimming of microscopic organisms (see, e.g., \cite{Tucsnak2007}).
We always assume the strong H\"ormander condition, i.e., that the iterated Lie brackets of the controlled vector fields $f_1,\ldots,f_m$ generate the whole tangent space at any point.
This guarantees the small time local controllability of system \eqref{int:d} and allows us to associate to \eqref{int:d} a sub-Riemannian control system.
Such assumption is generically satisfied, e.g., by finite-dimensional quantum control systems with two controls, as the ones studied in \cite{Boscain2004,Boscain2002,D'alessandro2001}.
Except when explicitly stated, we do not make any assumption on the dimension of $\Span\{f_1(q),\ldots,f_m(q)\}$ which, in particular, can depend on the point $q\in M$.

When posing $\drift=0$ in \eqref{int:d} we obtain the (small) sub-Riemannian control system associated with \eqref{int:d}, i.e., the driftless control system in the form
\begin{equation}\label{int:sr}
	\dot q(t) = \sum_{i=1}^m u_i(t) \,f_i(q(t)),\qquad \text{a.e. }t\in[0,T],
\end{equation}
This system satisfies the H\"ormander condition is satisfied, i.e., the iterated Lie brackets of the vector fields $f_1,\ldots,f_m$ generate the whole tangent space at any point.
Moreover, we will always assume the sub-Riemannian structure to be equiregular (see Section~\ref{sec:sr}).
Given a sub-Riemannian control system, a natural choice for the cost is the $\xLone$-norm of the controls. 
Due to the linearity and the reversibility in time of such a system, the associated value function is in fact a distance, called Carnot-Carath\'eodory distance, that endows $M$ with a metric space structure.


Our work will focus on the following cost functions, 
\begin{equation}
	\label{int:costfunctions}
	\costuno(u,T) = \int_0^T \sqrt{\sum_{j=1}^m u_j(t)^2}\,dt \quad\text{ and }\quad
	\costdue (u,T)= \int_0^T \sqrt{1+\sum_{j=1}^m u_j(t)^2}\,dt.
\end{equation}
Namely, $\costuno(u,T)=\|u\|_{\lcont}$ and $\costdue(u,T)=\|(1,u)\|_{\lcontdue}$.
Let $q\in M$ and define $q_u:[0,T]\to M$ as the trajectory associated with a control $u\in\lcont$ such that $q_u(0)=q$.
The cost $\costuno$, measuring the $\xLone$-norm of the control, quantifies the cost spent by the controller to steer the system \eqref{int:d} along $q_u$.
On the other hand, $\costdue$ measures the Riemannian length of $q_u$ with respect to a Riemannian metric\footnote{Whenever it exists, which is not always the case since the vector fields $\drift,f_1,\ldots,f_m$ could be not linearly independent.} such that $\drift,f_1,\ldots,f_m$ are orthonormal.

Fix a time $\maxtime>0$ and consider the two value functions $\val^\costuno(q,q')$ and $\val^\costdue(q,q')$ as the infima of the costs $\costuno$ and $\costdue$, respectively, over all controls $u\in\contset=\bigcup_{0<T\le\maxtime} \xLone([0,T],\real^m)$ steering the system from $q$ to $q'$.
Contrarily to what happens in sub-Riemmanian geometry with the Carnot-Carath\'eodory distance, these value functions are not symmetric, and hence do not induce a metric space structure on $M$.
In fact, system \eqref{int:d} is not reversible -- i.e., changing orientation to an admissible trajectory does not yield an admissible trajectory.

We consider controls defined on $T\le\maxtime$ since we are interested in the local behavior of system \eqref{int:d}.
Indeed, without an upper-bound for the time of definition of the controls, the reacheable sets $\mathcal R^{\drift}(q,\eps)=\{q'\in 
M \mid \val^\costuno(q,q')\le\eps \}$ are in general non-compact for any $q\in M$ and $\eps>0$.
As a byproduct of this choice, by taking $\maxtime$ sufficiently small, it is then possible to prevent any exploitation of the geometry of the orbits of the drift (that could be, for example, closed).
Let us also remark that, since the controls can be defined on arbitrarily small times, it is possible to approximate admissible trajectories for system \eqref{int:d} via trajectories for the sub-Riemannian associated system (i.e., the one obtained by posing $\drift\equiv0$) rescaled on small intervals.


\subsection{Complexities} 
\label{sub:complexities}

Heuristically, the complexity of a curve $\Gamma$ (or path $\gamma:[0,T]\to M$) at precision $\eps$ is defined as the ratio
\begin{equation}
	\label{eq:euristic}
	\frac {\text{``cost'' to track $\Gamma$ at precision $\eps$}} {\text{``cost'' of an elementary $\eps$-piece}}.
\end{equation}
In order to obtain a precise definition of complexity, we need to give a meaning to the notions appearing above.
Namely, we have to specify what do we mean by ``cost''\footnote{The cost appearing in \eqref{eq:euristic} is not necessarily related with the cost function ($\costuno$ or $\costdue$) taken into account. This is the reason for the quotation marks.}, tracking at precision $\eps$, and elementary $\eps$-piece.
Indeed, these choices will depend on the type of motion planning problem at hand.

First of all, we classify motion planning problems as \emph{time-critical} or \emph{static}, depending on wether the constraints depend on time or not.
The typical example of static motion planning problem is the obstacle avoidance problem with fixed obstacles.
On the other hand, the same problem where the position of the obstacles depends on time, or the rendez-vous problems, are examples of time-critical motion planning problems.

For static motion planning problems, the solution of the first step of the motion planning scheme (introduced at the beginning of the paper) is usually given as a curve, i.e., a dimension $1$ connected submanifolds of $\Gamma\subset M$ diffeomorphic to a closed interval.
On the other hand, in time-critical problems we have to keep track of the time.
Thus, for this type of problems, the solution of the first step is a path, i.e., a smooth injective function $\gamma:[0,T]\to M$.
As a consequence, when computing the complexity of paths we will require the approximating trajectories to respect also the parametrization, and not only the geometry, of the path.
While in the sub-Riemannian case, due to the time rescaling properties of the control system, these concepts coincide, this is not the case for control-affine systems.

In this work, we consider four distinct notions of complexity, two for curves (static problems) and two for paths (time-critical problems). 
In both cases, one of the two will be based on the interpolation of the given curve or path, while the other will consider trajectories that stays near the curve or path.
Thus, for this complexity, we will need to fix a metric. 
In this work we will consider only the sub-Riemannian metric of the associated sub-Riemannian control system (\ref{int:sr})).

We remark that the two complexity for curves are the same as the sub-Riemannian ones already introduced in \cite{Gromov1996,Jean2001}.
This is true also for what we call the neighboring approximation complexity of a path, since in the sub-Riemannian case it coincides with the tubular approximation complexity.
On the other hand, what we call the interpolation by time complexity never appeared in the literature, to our knowledge.
Here we give the definitions for a generic cost $J:\contset\to [0,+\infty)$.

Fix a curve $\Gamma$ and,
for any $\eps>0$, define the following complexities for $\Gamma$. 
\begin{itemize}
	\item \emph{Interpolation by cost complexity:} (see Figure~\ref{fig:ccost})
		\begin{figure}[tb]
			\begin{center}
				\begin{minipage}[t]{0.45\textwidth}
					\centering

\usetikzlibrary{calc}

\begin{tikzpicture}
[rotate=-30,scale=.75]
				      \draw (0,0) node[below] {$x$}
				        to[in=245,out=45] node[midway,above left] {$\Gamma$} (.5,1.5)
				        to[out=65,in=200] (1,2.5)
				        to[in=240,out=20]  (3.5,3)
				        to[out=60,in=250] (4.5,4)
				        to[out=70,in=215] (5,5.5)
				        to[out=35,in=190]  (6,6)
				        node[right] {$y$};

					\node (qu1) at (.33,.82) {};
				    \filldraw[red] (qu1) circle (1pt);
					\node (qu2) at (1,2.5) {};
					\node[above right] at (qu2) {$q_u(t_{i-1})$};
					\filldraw[red] (qu2) circle (1pt);
					\node (qu3) at (3.5,3)  {};
					\filldraw[red] (qu3) circle (1pt);
					\node[below right,yshift=2pt] at (qu3) {$q_u(t_i)$};
					\node (qu4) at (4.41,3.83)  {};
					\filldraw[red] (qu4) circle (1pt);
					\node (qu5) at (4.6,4.36) {};
					\filldraw[red] (qu5) circle (1pt);
					\node (qu6) at (5.15,5.6)  {};
					\filldraw[red] (qu6) circle (1pt);
					
					\draw (qu2) -- ($(qu2)+2.6*(.3,-.5)$);
					\draw ($(qu2)+2.6*(.3,-.5)$) -- node[midway,below] {$\le\varepsilon$}
					($(qu3)+1.2*(.3,-.5)$);
					\draw (qu3) -- ($(qu3)+1.2*(.3,-.5)$);

				      \draw[red] plot[smooth] coordinates {(0,0) (.5,.5) (.3,1) (.5,2.5) (1,2.5) (1.5,2.3) (2,2.5) (3.5,2.5) (3.5,3.5) (4.7,4) (4.3,5) (5,5.7) (5.4,5.3) (6,6)};
\end{tikzpicture}
				\caption{Interpolation by cost complexity}
				\label{fig:ccost}
				\end{minipage}	
				\hfill
				\begin{minipage}[t]{0.45\textwidth}
					\centering
\begin{tikzpicture}
[rotate=-40,scale=.75]
				      \draw (0,0) node[below] {$x$}
				        to[in=245,out=45] (.5,1.5)
				        to[out=65,in=200] (1,2.5)
				        to[in=240,out=20] (3.5,3)
				        to[out=60,in=250] (4.5,4)
				        to[out=70,in=215] (5,5.5)
				        to[out=35,in=190] node[above] {$\Gamma$} (6,6)
				        node[right] {$y$};

				      \foreach \x in {1,-1}
				      {
				      \draw[dashed] ($(0,0)+(\x*.5,-\x*.5)$)
				        to[in=245,out=45] ($(.5,1.5)+(\x*.5,-\x*.5)$)
				        to[out=65,in=200] ($(1,2.5)+(\x*.5,-\x*.5)$)
				        to[in=240,out=20] ($(3.5,3)+(\x*.5,-\x*.5)$)
				        to[out=60,in=250] ($(4.5,4)+(\x*.5,-\x*.5)$)
				        to[out=70,in=215] ($(5,5.5)+(\x*.5,-\x*.5)$)
				        to[out=35,in=190] ($(6,6)+(\x*.5,-\x*.5)$) ;
				      }
				      \draw[dashed] (-.5,.5) arc (125:317:.71);
				      \draw[dashed] (6.5,5.5) .. controls (7.3,6) and (6.5,6.7) .. (5.5,6.5);
				      \node[below] at (2.5,1) {Tube$(\Gamma,\varepsilon)$};
				      \draw[->] (2.5,1) -- (3,1.95);

				      \draw[red] plot[smooth] coordinates {(0,0) (.5,.5) (.3,1) (.5,2.5) (1,2.5) (1.5,2.3) (2,2.5) (3.5,2.5) (3.5,3.5) (4.7,4) (4.3,5) (5,5.7) (5.4,5.3) (6,6)};
				      \node at (0,3.5) {$q_u(\cdot)$};
				      \draw[->] (.2,3.2) -- (.5,2.7);
\end{tikzpicture}
				\caption{Tubular approximation complexity}
				\label{fig:capp}
				\end{minipage}

				\vspace{1cm}

				\begin{minipage}{0.45\textwidth}
					\centering

\usetikzlibrary{calc}

\begin{tikzpicture}
[rotate=-30,scale=.75]
				      \draw (0,0) node[below] {$x$}
				        to[in=245,out=45] 
				        (.5,1.5)
				        to[out=65,in=200] (1,2.5)
				        to[in=240,out=20]  (3.5,3)
				        to[out=60,in=250] (4.5,4)
				        to[out=70,in=215] (5,5.5)
				        to[out=35,in=190]  (6,6)
				        node[right] {$y$};

					\node (qu1) at (.33,.82) {};
				    \filldraw[red] (qu1) circle (1pt);
					\node (qu2) at (1,2.5) {};
					\node[above] at ($(qu2)+(0,1)$) {$q_u(t_{i-1})=\gamma(t_{i-1})$};
					\filldraw[red] (qu2) circle (1pt);
					\draw[->] ($(qu2)+(0,1)$) -- (qu2);
					\node (qu3) at (3.5,3)  {};
					\filldraw[red] (qu3) circle (1pt);
					\node[below] at ($(qu3)+(2,.7)$) {$q_u(t_i)=\gamma(t_i)$};
					\draw[->] ($(qu3)+(1,0)$)  -- (qu3);
					\node (qu4) at (4.41,3.83)  {};
					\filldraw[red] (qu4) circle (1pt);
					\node (qu5) at (4.6,4.36) {};
					\filldraw[red] (qu5) circle (1pt);
					\node (qu6) at (5.15,5.6)  {};
					\filldraw[red] (qu6) circle (1pt);
					
					\draw (qu2) -- ($(qu2)+2.6*(.3,-.5)$);
					\draw ($(qu2)+2.6*(.3,-.5)$) -- node[midway,below] {on avg. $\le\varepsilon$}
					($(qu3)+1.2*(.3,-.5)$);
					\draw (qu3) -- ($(qu3)+1.2*(.3,-.5)$);

				      \draw[red] plot[smooth] coordinates {(0,0) (.5,.5) (.3,1) (.5,2.5) (1,2.5) (1.5,2.3) (2,2.5) (3.5,2.5) (3.5,3.5) (4.7,4) (4.3,5) (5,5.7) (5.4,5.3) (6,6)};
\end{tikzpicture}
					\caption{Time interpolation complexity}
					\label{fig:ctime}
				\end{minipage}
                \hfill
				\begin{minipage}{0.45\textwidth}
					\centering
\begin{tikzpicture}[rotate=-30,scale=.75]
				      \draw (0,0) node[below] {$x$}
				        to[in=245,out=45] (.5,1.5)
				        to[out=65,in=200] (1,2.5)
				        to[in=240,out=20] (3.5,3)
				        to[out=60,in=250]  (4.5,4)
				        to[out=70,in=215] (5,5.5)
				        to[out=35,in=190]  (6,6) node[right] {$y$};
				      \node (v1) at (4.5,4) {};
				      \draw[scale=2.5,dashed] ($(v1)+(-0.5,0)$)
				            to[out=90,in=180] ($(v1) + (0,0.5)$)
				            to[out=0,in=90] ($(v1) + (.5,0)$)
				            to[out=270,in=0] ($(v1) + (.2,-0.4)$)
				            to[out=180,in=270] ($(v1)+(-0.5,0)$);
				      \draw[red] plot[smooth] coordinates {(0,0) (.5,.5) (.3,1) (.5,2.5) (1,2.5) (1.5,2.3) (2,2.5) (3.5,2.5) (3.5,3.5) (4.2,4.3) };
                      \draw[red,dashed] plot[smooth] coordinates {(4.2,4.3) (5,5) (5.2,5.6) (5.6,6.05) (6,6)};
				      \node (qu) at (4.2,4.3) {};
				      \filldraw[red] (qu) circle (1pt);
				      \filldraw (v1) circle (1pt);
				      \node[above] (qu) at (4.2,4.3) {$q_u(t)$};
				      \node[right] at ($(v1)+(0,-.2)$) {$\gamma(t)$};
				      \node at (4.6,6) {$\bsr(\gamma(t),\varepsilon)$};
\end{tikzpicture}
                    \caption{Neighboring approximation complexity}
					\label{fig:cneig}
				\end{minipage}
		  	\end{center}
		\end{figure}
		For $\eps>0$, let an \em $\eps$-cost interpolation \em\/ of $\Gamma$ to be any control $u\in\contset$ such that there exist $0=t_0<t_1<\ldots<t_N=T\le\maxtime$ for which the trajectory $q_u$ with initial condition $q_u(0)=x$ satisfies $q_u(T)= y$, $q_u(t_i)\in\Gamma$ and $J(u|_{[t_{i-1},t_{i})},t_{i}-t_{i-1})\le\eps$, for any $i=1,\ldots, N$.
		Then, let
		\[
			\ccost(\Gamma,\eps) = \frac 1 \eps\inf \big\{ J(u,T)\mid u\text{ is an $\eps$-cost interpolation of }\Gamma \big\}.
		\]
		This function measures the number of pieces of cost $\eps$ necessary to interpolate $\Gamma$.
		Namely, following a trajectory given by a control admissible for $\ccost(\Gamma,\eps)$, at any given moment it is possible to go back to $\Gamma$ with a cost less than $\eps$. 
	\item \emph{Tubular approximation complexity:} (see Figure~\ref{fig:capp})
		Let $\tube(\Gamma,\eps)$ to be the tubular neighborhood of radius $\eps$ around the curve $\Gamma$ w.r.t.\ the small sub-Riemannian system associated with \eqref{int:d} (obtained by putting $\drift\equiv0$, see Section~\ref{sub:control-affine_systems}), and define
		\[
			\capp(\Gamma,\eps) = \frac 1 \eps \inf \left\{ J(u,T) \left|\: 
			\begin{array}{l}
				0<T\le\maxtime,\\
				q_u(0)=x,\, q_u(T)=y, \\
				q_u\big([0,T] \big)\subset \tube(\Gamma,\eps) \\
			\end{array}
			\right.
		\right\}
		\]
		This complexity measures the number of pieces of cost $\eps$ necessary to go from $x$ to $y$ staying inside the sub-Riemannian tube $\tube(\Gamma,\eps)$.
		Such property is especially useful for motion planning with obstacle avoidance.
		In fact, if the sub-Riemannian distance of $\Gamma$ from the obstacles is at least $\eps_0>0$, then trajectories obtained from controls admissible for $\capp(\Gamma,\eps)$, $\eps<\eps_0$, will avoid such obstacles.
\end{itemize}

We then define the following complexities for a path $\gamma:[0,T]\to M$ at precision $\eps>0$.
\begin{itemize}
	\item \emph{Interpolation by time complexity:} (see Figure~\ref{fig:ctime})
		Let a $\delta$-time interpolation of $\gamma$ to be any control $u\in\lcont$ such that its trajectory $q_u:[0,T]\to M$ in \eqref{int:d} with $q_u(0)=\gamma(0)$ is such that $q_u(T)=\gamma(T)$ and that, for any interval $[t_0,t_1]\subset[0,T]$ of length $t_1-t_0\le \delta$, there exists $t\in[t_0,t_1]$ with $q_u(t)=\gamma(t)$.
		Then, fix a $\maxtimectime>0$ and let
		\[
			\ctime(\gamma,\eps) = \inf \left\{ \frac T \delta \, \bigg|\: 
				\begin{array}{c}
					\delta\in(0,\maxtimectime) \text{ and exists $u\in\lcont$,} \\
					\delta\text{-time interpolation of $\gamma$, s.t. } \delta\,J(u,T)\le\eps
				\end{array}
			\right\}.
		\]
		Controls admissible for this complexity will define trajectories such that the minimal average cost between any two consecutive times such that $\gamma(t)=q_u(t)$ is less than $\eps$.
		It is thus well suited for time-critical applications where one is interested in minimizing the time between the interpolation points - e.g.\ motion planning in rendez-vous problem.
	\item \emph{Neighboring approximation complexity:} (see Figure~\ref{fig:cneig})
		Let $\bsr(p,\eps)$ denote the ball of radius $\eps$ centered at $p\in M$ w.r.t.\ the small sub-Riemannian system associated with \eqref{int:d} (obtained by putting $\drift\equiv0$, see Section~\ref{sub:control-affine_systems}), and define
		\[
			\cneig(\gamma,\eps) = \frac 1 \eps \inf \left\{ J(u,T) \bigg|\: 
			\begin{array}{l}
				q_u(0)=x,\, q_u(T)= y, \\
				q_u(t) \in \bsr(\gamma(t),\eps),\,\forall t\in[0,T]\\
			\end{array}
			\right\}.
		\]
		This complexity measures the number of pieces of cost $\eps$ necessary to go from $x$ to $y$ following a trajectory that at each instant $t\in[0,T]$ remains inside the sub-Riemannian ball $\bsr(\gamma(t),\eps)$.
		Such complexity can be applied to motion planning in rendez-vous problems where it is sufficient to attain the rendez-vous only approximately, or to motion planning with obstable avoidance where the obstacles are moving.
\end{itemize}

Whenever we will need to specify with respect to which cost a complexity is measured, we will write the cost function as apex -- e.g., to we will denote the interpolation by cost complexity w.r.t.\ $\costuno$ as $\ccost^\costuno$.

We remark that for the interpolation by time complexity the ``cost'' in \eqref{eq:euristic} is the time, while for all the other complexities it is the cost function associated with the system.
For the motivation of the bound on $\delta$ in the definition of the interpolation by time complexity, see Remark~\ref{rmk:timeboundmot}.
Finally, whenever a metric is required, we use the sub-Riemannian one.
Although such metric is natural for control-affine systems satisfying the H\"ormander condition, nothing prevents from defining complexities based on different metrics.

Two functions $f(\eps)$ and $g(\eps)$, tending to $\infty$ when $\eps\downarrow0$ are \emph{weakly equivalent} (denoted by $f(\eps)\asymp g(\eps)$) if both $f(\eps)/g(\eps)$ and $g(\eps)/f(\eps)$, are bounded when $\eps\downarrow0$.
When $f(\eps)/g(\eps)$ (resp. $g(\eps)/f(\eps)$) is bounded, we will write $f(\eps)\preccurlyeq g(\eps)$ (resp. $f(\eps)\succcurlyeq g(\eps)$).
In the sub-Riemannian context, the complexities are always measured with respect to the $\xLone$cost of the control, $\costuno$.
Then, for any curve $\Gamma\subset M$ and path $\gamma:[0,T]\to M$ such that $\gamma([0,T])=\Gamma$ it holds $\ccostcostuno(\Gamma,\eps) \asymp \cappcostuno(\Gamma,\eps) \asymp \cneigcostuno(\gamma,\eps)$.

A complete characterization of weak asymptotic equivalence of sub-Riemannian complexities is obtained in \cite{Jean2003a}.
We state here such characterization in the special case where $\vett$ defines an equiregular structure.

\begin{thm}
	\label{thm:srcomplexity}
    Assume that $\vett$ defines an equiregular sub-Riemannian structure. 
    Let $\Gamma\subset M$ be a curve and $\gamma:[0,T]\to M$ be a path such that $\gamma([0,T])=\Gamma$.
    Then, if there exists $k\in\nat$ such that $T_q\Gamma\subset \distr^k(q)\setminus\distr^{k-1}(q)$ for any $q\in \Gamma$, it holds
    \[
    	\ccost(\Gamma,\eps)\asymp \capp(\Gamma,\eps) \asymp \cneig(\gamma,\eps)\asymp \frac 1 {\eps^k}.
    \] 
    Here the complexities are measured w.r.t.\ the cost $\costuno(u,T)=\|u\|_{\lcont}$.
\end{thm}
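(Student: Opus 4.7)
The plan is to reduce everything to the Ball--Box theorem for equiregular sub-Riemannian structures, together with elementary comparisons between the three complexities. The key point is that the hypothesis $T_q\Gamma\subset\distr^k(q)\setminus\distr^{k-1}(q)$, combined with equiregularity, pins down the anisotropic scale at which the sub-Riemannian metric sees the curve.

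\textbf{Step 1 (Ball--Box setup).} On an equiregular structure there exist constant weights $w_1\leq\cdots\leq w_n$ adapted to the flag $\distr^1\subset\cdots\subset\distr^r=TM$. Around each $q_0\in\Gamma$ I would choose privileged coordinates $(x_1,\ldots,x_n)$ in which $\bsr(q,\eps)$ is comparable, uniformly for $q$ in a neighbourhood of $\Gamma$, to the weighted box $\{|x_i|\leq\eps^{w_i}\}$. The hypothesis on $T_q\Gamma$ then translates, in such coordinates centred at $q\in\Gamma$, into the assertion that $\dot\gamma$ has a non-vanishing component in some direction of weight exactly $k$ and vanishes against directions of lower weight. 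In particular, an arc of $\Gamma$ of Euclidean length $\eta$ produces an increment of order $\eta$ in at least one coordinate of weight $k$, and only higher-order increments in the remaining ones.

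\textbf{Step 2 (Upper bound).} Partition $[0,T]$ into $N=\lceil CL/\eps^k\rceil$ equal sub-intervals with endpoints $t_i$. By Step~1 the displacement $\gamma(t_i)-\gamma(t_{i-1})$ fits into a box of side $\eps^{w_j}$ in each coordinate of weight $w_j$, so $\gamma(t_{i-1})$ and $\gamma(t_i)$ lie at sub-Riemannian distance $\leq\eps$. Concatenating, on each sub-interval, a near-minimizing sub-Riemannian control of $L^1$-norm $\leq\eps$ produces an $\eps$-cost interpolation, yielding $\ccost(\Gamma,\eps)\preccurlyeq 1/\eps^k$.

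\textbf{Step 3 (Lower bound).} Conversely, for any $\eps$-cost interpolation, consecutive interpolation points are joined by a trajectory of $L^1$-cost $\leq\eps$, hence by the Ball--Box theorem their displacement in any coordinate of weight $k$ is $O(\eps^k)$. Summing over all pieces and using that $\Gamma$ has total weight-$k$ projection of order $L$ forces $N\geq cL/\eps^k$, so $\ccost(\Gamma,\eps)\succcurlyeq 1/\eps^k$.

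\textbf{Step 4 (Other complexities).} Since each interpolating piece stays in $\bsr(\gamma(t_{i-1}),\eps)\subset\tube(\Gamma,\eps)$, one has $\capp\leq\ccost$. For the converse, I would cut a tubular trajectory at times where it crosses transverse hypersurfaces to $\Gamma$ spaced at SR-distance $\eps$, producing an interpolation of comparable cost; by the Ball--Box theorem at least $c/\eps^k$ such crossings occur, giving the matching lower bound for $\capp$. Finally, $\cneig(\gamma,\eps)\asymp\capp(\Gamma,\eps)$ follows from the time-rescaling invariance of \eqref{int:sr}: in the driftless setting any admissible trajectory can be freely reparametrized in time, so a tubular approximation can be rescheduled to become a neighbouring approximation at the same cost, and vice versa.

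The main obstacle is Step~3, where the Ball--Box estimate must be applied with constants uniform along $\Gamma$; equiregularity is essential there, since only then are the weights $w_i$ constant and privileged coordinates can be chosen continuously in $q_0$. Step~2 is a routine construction once Step~1 is in place, and the comparisons in Step~4 hinge entirely on reversibility and time-rescaling, features that will be lost as soon as a drift is introduced in the subsequent sections.
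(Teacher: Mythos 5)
Note first that the paper does not prove Theorem~\ref{thm:srcomplexity}: it quotes the equiregular special case of the characterization established in \cite{Jean2003a}, so there is no in-paper proof with which to compare your argument. Your Ball--Box strategy is certainly the right one, and Steps~1, 2 and the equivalence $\cneig\asymp\capp$ by time-rescaling (which the paper itself asserts in Section~\ref{sub:complexities}) are sound, modulo the usual adjustment of constants in Step~2 so that each piece costs $\le\eps$ rather than $\le C\eps$.

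Step~3, however, contains a genuine gap. The interpolation complexity is $\ccost(\Gamma,\eps)=\tfrac1\eps J(u,T)$, which is the normalized \emph{total cost}, not the number of pieces $N$: the constraint is only $J(u_i)\le\eps$, and the infimum could a priori be achieved by interpolations in which most pieces have cost far below $\eps$, making $N$ large while $J(u,T)$ stays small. Thus $N\succcurlyeq\eps^{-k}$ does not by itself bound $\ccost$ from below. The correct estimate is piecewise: if the $i$-th piece has cost $c_i\le\eps$ and joins $\gamma(s_{i-1})$ to $\gamma(s_i)$, then Ball--Box together with \eqref{eq:stimacrescita} gives $|s_i-s_{i-1}|\le C\,c_i^{\,k}\le C\eps^{k-1}c_i$; summing telescopes to $T\le C\eps^{k-1}\sum_i c_i = C\eps^{k-1}J(u,T)$, hence $\ccost(\Gamma,\eps)=J(u,T)/\eps\succcurlyeq\eps^{-k}$. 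This is precisely the mechanism the paper uses for $\ctime$ in the proof of Theorem~\ref{thm:srtimecomplexity}, adapted to the fact that one controls the cost of each piece rather than its duration. A similar remark applies to your lower bound for $\capp$ in Step~4: cutting a tubular trajectory at transverse sections produces points in $\tube(\Gamma,\eps)$ but not on $\Gamma$, so one does not directly obtain an $\eps$-cost interpolation; some projection-and-estimate argument is needed there, and it deserves to be spelled out rather than left implicit.
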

We mention also that for a restricted set of sub-Riemannian systems, i.e., one-step bracket generating or with two controls and dimension not larger than $ 6$, strong asymptotic estimates and explicit asymptotic optimal syntheses are obtained in the series of papers \cite{8,9,10,11,12,13,14} (see \cite{gauthierbonnard} for a review).

\subsection{Main results} 
\label{sub:main_results}

Our first result is the following.
It completes the description of the sub-Riemannian weak asymptotic estimates started in Theorem~\ref{thm:srcomplexity}, describing the case of the interpolation by time complexity.
It is proved in Section~\ref{sec:first}.

\begin{thm}
	\label{thm:srtimecomplexity}
    Assume that $\vett$ defines an equiregular sub-Riemannian structure and let $\gamma:[0,T]\to M$ be a path.
    Then, if there exists $k\in\nat$ such that $\dot\gamma(t)\in\distr^k(\gamma(t))\setminus\distr^{k-1}(\gamma(t))$ for any $t\in[0,T]$, it holds
    \[
    	\ctime(\gamma,\eps)\asymp \frac 1 {\eps^{k}}.
    \] 
    Here the complexity is measured w.r.t.\ the cost $\costuno(u,T)=\|u\|_{\lcont}$.
\end{thm}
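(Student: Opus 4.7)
The plan is to reduce the time-interpolation complexity to the known sub-Riemannian distance estimates along $\gamma$. The central input, which follows from the Ball-Box theorem together with equiregularity of the sub-Riemannian structure and compactness of $\gamma([0,T])$, is the pointwise estimate
\[
    d_{SR}\bigl(\gamma(s),\gamma(s')\bigr) \asymp |s-s'|^{1/k}\qquad\text{uniformly in }s,s'\in[0,T],
\]
valid because $\dot\gamma(t)\in\distr^k(\gamma(t))\setminus\distr^{k-1}(\gamma(t))$ (so $\gamma(s')-\gamma(s)$, read in privileged coordinates at $\gamma(s)$, has a component of weight exactly $k$, and components of higher weight are zero or negligible). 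I would establish this first as a preliminary lemma, stressing uniformity in $s$.

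\emph{Upper bound.} Given $\eps>0$, I choose $\delta := c(\eps/T)^k$ with $c$ a small constant, and partition $[0,T]$ into $N=\lceil T/\delta\rceil$ intervals $[t_{i-1},t_i]$ of length $\le\delta$. On each interval I join $\gamma(t_{i-1})$ to $\gamma(t_i)$ by a (nearly) length-minimizing sub-Riemannian control, then reparametrize it in time to fit exactly in the interval of length $t_i-t_{i-1}$; this is allowed because the system \eqref{int:sr} is driftless, so the $\xLone$-cost $\costuno$ is invariant under time rescaling. The cost of each such piece is of order $\delta^{1/k}$ by the preliminary lemma, so
\[
    \costuno(u,T) \le C\,N\,\delta^{1/k} \asymp T\,\delta^{1/k-1},
\]
hence $\delta\,\costuno(u,T)\asymp T\,\delta^{1/k}\le\eps$ by the choice of $\delta$, and $T/\delta\asymp 1/\eps^k$.

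\emph{Lower bound.} Let $u$ be any admissible $\delta$-time interpolation. Extract from the $\delta$-dense set of interpolation times a finite sequence $0=t_0<t_1<\cdots<t_N=T$ with $t_i-t_{i-1}\le\delta$; then $N\ge T/\delta$. Since $q_u(t_{i-1})=\gamma(t_{i-1})$ and $q_u(t_i)=\gamma(t_i)$, the cost on each piece is bounded below by $d_{SR}(\gamma(t_{i-1}),\gamma(t_i))\ge c(t_i-t_{i-1})^{1/k}$, so
\[
    \costuno(u,T)\;\ge\;c\sum_{i=1}^N (t_i-t_{i-1})^{1/k}.
\]
The function $x\mapsto x^{1/k}$ is concave with $\phi(0)=0$, so under the constraints $\sum x_i=T$ and $0<x_i\le\delta$ the sum $\sum x_i^{1/k}$ is minimised by taking as many $x_i$ equal to $\delta$ as possible, giving $\sum x_i^{1/k}\ge c'\,T\,\delta^{1/k-1}$. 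Thus $\delta\,\costuno(u,T)\ge c''\,T\,\delta^{1/k}$, and the constraint $\delta\costuno(u,T)\le\eps$ forces $\delta\le C(\eps/T)^k$, whence $T/\delta\ge C'/\eps^k$.

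The main obstacle is the sharp two-sided estimate $d_{SR}(\gamma(s),\gamma(s'))\asymp|s-s'|^{1/k}$, uniformly in $s$. The upper bound is a direct consequence of the Ball-Box theorem, but the matching lower bound requires care: one must argue that the displacement $\gamma(s')-\gamma(s)$ genuinely carries weight $k$ in the graded structure, using the hypothesis $\dot\gamma\notin\distr^{k-1}$ together with equiregularity to control the dependence of privileged coordinates on the base point along $\gamma$. Once this preliminary estimate is in hand, the two bounds above are essentially arithmetic, with the optimization reduced to the concavity of $x\mapsto x^{1/k}$.
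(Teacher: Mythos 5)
Your proposal is correct and follows essentially the same route as the paper: both reduce the estimate to the two-sided bound $\dsrs(\gamma(s),\gamma(s'))\asymp|s-s'|^{1/k}$ obtained from the Ball-Box theorem applied in a continuous family of privileged coordinates along $\gamma$ (Propositions \ref{prop:privilegedproperty}, \ref{prop:normproperty} and Remark \ref{rmk:bbuniform}), then reduce $\ctime$ to $\ctimeaux$ via \eqref{eq:complaux} and sum over a partition of mesh $\asymp\delta$. Your reorganization around an explicit preliminary lemma, and your use of subadditivity of $x\mapsto x^{1/k}$ in the lower bound in place of the paper's thinning of the interpolation times to enforce $t_i-t_{i-1}\ge\delta/2$, are minor presentational variants of the same argument.
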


Since in the sub-Riemannian context one is only interested in the cost $\costuno$, Theorems~\ref{thm:srcomplexity} and \ref{thm:srtimecomplexity} completely characterize the weak asymptotic equivalences of complexities of equiregular sub-Riemannian manifolds.

The main result of the paper is then a weak asymptotic equivalence of the above defined complexities in control-affine systems, generalizing Theorems~\ref{thm:srcomplexity} and \ref{thm:srtimecomplexity}.

\begin{thm}
	\label{thm:driftcomplexity}
	Assume that $\vett$ defines an equiregular sub-Riemannian structure and that 
    $\drift\subset\distr^s\setminus\distr^{s-1}$ for some $s\ge 2$.
    Also, assume that the complexities are measured w.r.t.\ the cost function $\costuno(u,T)=\|u\|_{\lcont}$ or $\costdue(u,T)=\|(1,u)\|_{\lcontdue}$.
    We then have the following.
    \begin{enumerate}[i.]
    	\item Let $\Gamma\subset M$ be a curve and define
		    $\kappa=\max\{ k \colon\: T_p\Gamma\in\Delta^k(p)\setminus\Delta^{k-1}(p),\, $ for any $p$ in an open subset of $\Gamma\}$.
		    Then, whenever the maximal time of definition of the controls $\maxtime$ is sufficiently small, it holds
		    \[
		    	\ccost(\Gamma,\eps) \asymp \capp(\Gamma,\eps)\asymp \frac 1 {\eps^\kappa}.
		    \]
    	\item On the other hand, let $\gamma:[0,T]\to M$ be a path such that $\drift(\gamma(t))\neq \dot\gamma(t) \mod\Delta^{s-1}(\gamma(t))$ for any $t\in [0,T]$ and define
    		$\kappa=\max\{ k \colon\: \gamma(t)\in\Delta^k(\gamma(t))\setminus\Delta^{k-1}(\gamma(t)) $ for any $t$ in an open subset of $[0,T]\}$. 
		    Then, it holds
			\[
				\ctime(\gamma,\eps)\asymp\cneig(\gamma,\eps) \asymp \frac 1 {\eps^{\max\{\kappa,s\}}},
			\]
			where the first asymptotic equivalence is true only when $\maxtimectime$, i.e., the maximal time-step in $\ctime(\gamma,\eps)$, is sufficiently small.
    \end{enumerate}
    \end{thm}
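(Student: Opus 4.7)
The plan is to deduce both parts from Theorems~\ref{thm:srcomplexity} and \ref{thm:srtimecomplexity} by treating the drift as a perturbation, controlled via the ball-box theorem: a vector field of SR-order $k$ integrated over time $\tau$ produces an SR-displacement of order $\tau^{1/k}$. Applied to $\drift\in\Delta^s\setminus\Delta^{s-1}$, this bounds the cumulative drift contribution over any sub-interval of length $\tau$ by $C\tau^{1/s}$. The comparison $\costuno(u,T)\le\costdue(u,T)\le\costuno(u,T)+T$ then allows the two cost functions to be handled uniformly, since total times remain bounded by $\maxtime$.

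For part~(i), the exponent $\kappa$ depends only on the SR order of the tangent to $\Gamma$, since the affine system retains the freedom to rescale the time parameter of the controlled motion. For the upper bounds on $\ccost$ and $\capp$, I would take an SR-optimal interpolation (respectively SR-tubular approximation) of $\Gamma$ furnished by Theorem~\ref{thm:srcomplexity}, reparametrize it so that its controlled part runs in total time $T$ well below $\eps^s$ by simply multiplying the controls, and feed the resulting $u$ into \eqref{int:d}. The drift displacement across the whole trajectory is then $O(T^{1/s})=o(\eps)$, and can be corrected by a sub-Riemannian motion of SR-cost $O(\eps)$ appended between pieces without affecting the asymptotic count. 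For the lower bound, any admissible $u$ with $\costuno(u,T)\le J$ generates a trajectory $q_u$ whose SR-distance from the SR-trajectory with the same control is bounded by $C\maxtime^{1/s}$; via the ball-box, the Euclidean advance along $\Gamma$ per piece is at most $C(\eps+\tau_i^{1/s})^{\kappa}$, and summing over $N$ pieces with $\sum\tau_i\le\maxtime$ forces $N\succcurlyeq\eps^{-\kappa}$ as $\eps\to 0$, provided $\maxtime$ is chosen small enough to rule out the drift-dominated regime.

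For part~(ii) the algebraic core is the observation that under the transversality hypothesis $\drift(\gamma(t))\neq\dot\gamma(t)\bmod\Delta^{s-1}$, the residual tangent $\dot\gamma(t)-\drift(\gamma(t))$ has SR-order exactly $m:=\max\{\kappa,s\}$ on the open subset defining $\kappa$: a short case analysis on whether $\kappa<s$, $\kappa=s$, or $\kappa>s$ confirms this, with the transversality hypothesis entering precisely in the critical case $\kappa=s$ to prevent the residual from dropping below order $s$. For the upper bound, I would partition $[0,T]$ into subintervals of length $\delta\asymp\eps^m$ and, on each $[t_i,t_{i+1}]$, pick SR-type controls realizing the displacement $\gamma(t_{i+1})-\gamma(t_i)-\int_{t_i}^{t_{i+1}}\drift\,dt$, whose SR-size is $\asymp\delta^{1/m}\asymp\eps$ by ball-box. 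The resulting trajectory is simultaneously a $\delta$-time interpolation with $\delta\costuno(u,T)\asymp\eps$ and stays in the SR-ball $\bsr(\gamma(t),C\eps)$ at every instant, yielding both $\ctime,\cneig\preccurlyeq\eps^{-m}$. For the lower bound, the analogous partitioning of any admissible $u$ shows that on each $\delta$-subinterval the controlled $L^1$-cost must be $\gtrsim\delta^{1/m}-C\eps$, since the drift part of the displacement is fixed and the residual SR-displacement required is $\asymp\delta^{1/m}$; optimizing in $\delta$ gives $\costuno(u,T)\gtrsim T\eps^{1-m}$, hence $\delta\le C\eps^m$ and $\ctime,\cneig\succcurlyeq\eps^{-m}$.

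The principal obstacle I expect lies in the lower bounds, and most acutely in part~(ii): one must rule out that the drift's free motion could be exploited at small scales to realize the residual displacement with sub-$\eps$ SR-cost, or to artificially lower the effective tangential order. Qualitatively the transversality hypothesis already prevents this, but converting it into a uniform quantitative bound over the relevant sub-intervals naturally requires choosing privileged coordinates along $\gamma$, invoking the nilpotent approximation of the SR structure associated with $\vett$, and estimating residual displacements modulo $\Delta^{m-1}$, in the spirit of the techniques developed in~\cite{Jean2003a}. The smallness assumptions on $\maxtime$ and $\maxtimectime$ serve precisely to ensure that these local estimates hold uniformly in the relevant time-windows and that the drift cannot accumulate across pieces to violate the claimed asymptotic.
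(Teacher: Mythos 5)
Your broad strategy -- privileged coordinates adapted to the drift, ball-box estimates, viewing $f_0$ as an order-$s$ perturbation, and reducing to the sub-Riemannian Theorems~\ref{thm:srcomplexity} and~\ref{thm:srtimecomplexity} -- is aligned with the paper's. The algebraic observation that $\dot\gamma - f_0$ sits in $\Delta^{\max\{\kappa,s\}}\setminus\Delta^{\max\{\kappa,s\}-1}$ under the transversality hypothesis is correct and captures part of the intuition, and the upper-bound constructions you sketch for both parts are essentially Propositions~\ref{prop:curveupperbound} and~\ref{prop:pathupperbound} (for (i), the paper's route is even shorter: $\val^\costdue\le\dsrs$ directly dominates the affine complexity by the SR one with no rescaling needed). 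The genuine gaps are in the lower bounds, and they are not cosmetic.

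For part (i), your per-piece bound ``Euclidean advance $\le C(\eps+\tau_i^{1/s})^\kappa$'' is too coarse: $\tau_i$ is bounded only by $\maxtime$, and $(\eps+\maxtime^{1/s})^\kappa$ does not vanish as $\eps\to0$, so summing it yields no divergence for $N$. The paper instead works in a continuous coordinate family adapted to $f_0$ with $z_\ell(\gamma(\cdot))\equiv0$ and uses Theorem~\ref{thm:bbox}: the reachable set $\bdt{q}{\eps}$ is a ``drift-cylinder'' $\Pi_T(C\eps)$, and its slice $\{z_\ell\le 0\}$ is an ordinary box $\bbox{C\eps}$; since the interpolation points lie on $\Gamma$ where $z_\ell=0$, one gets the clean per-piece bound $|z_\alpha|\le (C\eps)^\kappa$ independent of $\tau_i$. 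More fundamentally, you never address \emph{cusps}: the sub-additivity $\ccost(\Gamma',\eps)\preccurlyeq\ccost(\Gamma,\eps)$ (Proposition~\ref{prop:semicont}), needed to localize to a stratum where $T\Gamma\subset\Delta^\kappa\setminus\Delta^{\kappa-1}$, can fail at points where $f_0$ becomes tangent to $\Gamma$ modulo $\Delta^{s-1}$; the Heisenberg example in the paper shows this is not a technicality. Proposition~\ref{prop:cusps} is exactly what excludes this, and your argument has no substitute for it.

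For part (ii), the lower bound in the critical case $\kappa=s$ is where the proposal stops at a heuristic. Your estimate ``controlled $L^1$-cost $\gtrsim\delta^{1/m}-C\eps$'' is both unjustified and insufficient: what must be proved is $\|u_i\|_{\xLone}\gtrsim\delta^{1/s}$ with no $\eps$-correction, and the obstruction is precisely that the free drift can realize a large part of the $z_\ell$-displacement at zero cost. The paper resolves this with Lemma~\ref{lem:partizione}, partitioning $[0,T]$ into $E_1,E_2,E_3$ according to whether $(z_\ell^t)_*\dot\gamma$ is below, near, or above $1$, and then arguing differently in each regime: on $E_1,E_3$ via Lemma~\ref{lem:stimatau} (which bounds time by $\text{cost}^s+z_\ell^+$), and on $E_2$ via the $z_\alpha$-coordinate whose coefficient $\varphi_\alpha$ is bounded below by the transversality hypothesis. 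Without this trichotomy you cannot rule out that, on a positive-measure set of times, the drift absorbs the whole displacement and the control cost per piece is $o(\delta^{1/s})$. You anticipate the difficulty in your final paragraph, but the mechanism you point to (nilpotent approximation, privileged coordinates) is necessary but not sufficient -- the specific decomposition of the time axis and the two distinct coordinates $z_\ell,z_\alpha$ are the missing ingredient.
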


This theorem shows that, asymptotically, the complexity of curves is not influenced by the drift, and only depends on the underlying sub-Riemannian system, while the one of paths depends also on how ``bad'' the drift is with respect to this system.
We remark also that for the neighboring approximation complexity, $\cneig$, it is not necessary to have an a priori bound on $\maxtime$.


\subsection{Long time local controllability} 
\label{sub:long_time_local_controllability}

As an application of the above theorem, let us briefly mention the problem of \emph{long time local controllability} (henceforth simply \emph{LTLC}), i.e., the problem of staying near some point for a long period of time $T>0$.
This is essentially a stabilization problem around a non-equilibrium point.

Since the system \eqref{int:d} satisfies the strong H\"ormander condition, it is always possible to satisfy some form of LTLC\@. 
Hence, it makes sense to quantify the minimal cost needed, by posing the following. 
Let $T>0$, $q_0\in M$, and $\gamma_{q_0}:[0,T]\to M$, $\gamma_{q_0}(\cdot)\equiv q_0$. 
\begin{itemize}
	\item \emph{LTLC complexity by time:}
		\begin{equation*}
			\ltlctime(q_0,T,\eps)=\ctime(\gamma_{q_0},\eps).
		\end{equation*}
		Here, we require trajectories defined by admissible controls to pass through $q_0$ at intervals of time such that the minimal average cost between each passage is less than $\eps$.
	\item \emph{LTLC complexity by cost:}
		\begin{equation*}
			\ltlcneig(q_0,T,\eps)=\cneig(\gamma_{q_0},\eps).
		\end{equation*}
		Admissible controls for this complexity, will always be contained in the sub-Riemannian ball of radius $\eps$ centered at $q_0$.
\end{itemize}

Clearly, if $\drift(q_0)=0$, then $\ltlctime(q_0,T,\delta)=\ltlcneig(q_0,T,\eps)=0$, for any $\eps,\delta,T>0$. 
Although $\gamma_{q_0}$ is not a path by our definition, since it is not injective and $\dot\gamma_{q_0}\equiv0$, the arguments of Theorem~\ref{thm:driftcomplexity} can be applied also to this case.
Hence, we get the following asymptotic estimate for the LTLC complexities. 

\begin{cor}
	\label{thm:longcontr}
	Assume that $\vett$ defines an equiregular sub-Riemannian structure and that 
    $\drift\subset\distr^s\setminus\distr^{s-1}$ for some $s\ge 2$.
    Also, assume that the complexities are measured w.r.t.\ the cost function $\costuno(u,T)=\|u\|_{\lcont}$ or $\costdue(u,T)=\|(1,u)\|_{\lcontdue}$.
	Then, for any $q_0\in M$ and $T>0$ it holds 
	\[
		\ltlctime(q_0,T,\delta)\asymp  \ltlcneig(q_0,T,\eps)\asymp \frac 1 {\eps^s}.
	\]
\end{cor}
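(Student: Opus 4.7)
The plan is to reduce Corollary~\ref{thm:longcontr} to part~(ii) of Theorem~\ref{thm:driftcomplexity} applied to the degenerate ``path'' $\gamma_{q_0}$. Although $\gamma_{q_0}$ is not a path in the sense defined above (it is neither injective nor has non-vanishing derivative), it satisfies the required hypothesis on the drift vacuously: since $\dot\gamma_{q_0}\equiv 0$ and $\drift(q_0)\in\distr^s(q_0)\setminus\distr^{s-1}(q_0)$, we have $\drift(\gamma_{q_0}(t))\neq\dot\gamma_{q_0}(t)\mod\distr^{s-1}(\gamma_{q_0}(t))$ for every $t\in[0,T]$. The integer $\kappa$ associated to $\gamma_{q_0}$ in the statement of that theorem is naturally $0$ (no $k\ge 1$ can satisfy $0\in\distr^k\setminus\distr^{k-1}$), so $\max\{\kappa,s\}=s$, which matches the exponent asserted in the corollary.

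To justify using the theorem in this degenerate case, I would inspect its proof and observe that both the upper and the lower bound construction simplify when the reference trajectory collapses to a single point. For the upper bound one builds, on each subinterval of length $\tau$, a trajectory that is advected by the drift and then brought back to $q_0$ by a sub-Riemannian loop. By the ball-box theorem applied to $\drift\in\distr^s\setminus\distr^{s-1}$, the drift displaces the state a sub-Riemannian distance of order $\tau^{1/s}$ from $q_0$, and the return loop has $\costuno$-cost of the same order $\tau^{1/s}$. Summing over the $T/\tau$ subintervals yields a total cost $\lesssim T\tau^{1/s-1}$. Choosing $\tau\asymp\eps^s$ gives $\delta\cdot J(u,T)\lesssim\eps$, hence $\ltlctime(q_0,T,\eps)\lesssim 1/\eps^s$; with the same choice the trajectory remains inside $\bsr(q_0,\eps)$, yielding $\ltlcneig(q_0,T,\eps)\lesssim 1/\eps^s$.

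For the matching lower bound I would again use privileged coordinates at $q_0$ and the ball-box theorem: any admissible trajectory satisfies $\dsrs(q_u(t+\tau),q_u(t))\gtrsim\tau^{1/s}$ unless the cost spent on $[t,t+\tau]$ is already at least of order $\tau^{1/s}$, because the drift direction lies at sub-Riemannian weight exactly $s$. For $\ltlctime$, this forces each time step of the interpolation to cost $\gtrsim\tau^{1/s}$, so the rescaled quantity $T/\delta$ is bounded below by $1/\eps^s$; for $\ltlcneig$, the same estimate bounds from below the $\costuno$-cost spent per unit of time that the trajectory stays in $\bsr(q_0,\eps)$, producing the bound $\cneig\gtrsim 1/\eps^s$.

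The main obstacle is technical rather than conceptual: one has to check that the proof of part~(ii) of Theorem~\ref{thm:driftcomplexity}, presumably phrased in terms of a privileged frame moving with $\dot\gamma(t)$, remains valid when that frame degenerates to a single privileged frame at $q_0$ and the contribution of the tangent direction $\dot\gamma$ simply drops out. Since this is geometrically the simplest situation, the arguments should in fact become cleaner, and the case $\costdue$ is handled in parallel to $\costuno$ because on short intervals $\costdue\le \tau+\costuno$ and the two costs differ only by the $\xLone$-norm of $1$, which is negligible in the asymptotic regime.
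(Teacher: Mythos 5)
Your proposal matches the paper's approach: the paper itself simply observes that $\gamma_{q_0}$ fails to be a path only because it is constant, and that the arguments behind Theorem~\ref{thm:driftcomplexity} (ii) --- in particular the ball-box-type estimate of Lemma~\ref{lem:stimatau} applied in privileged coordinates adapted to $\drift$ at $q_0$, together with the sub-interval decomposition of Propositions~\ref{prop:pathupperbound} and \ref{prop:pathlowerbound} --- go through unchanged, with $\dot\gamma\equiv 0$ placing the path in the $k<s$ regime and giving the exponent $\max\{\kappa,s\}=s$. Your verification that the hypothesis $\drift(\gamma_{q_0}(t))\neq\dot\gamma_{q_0}(t)\bmod\distr^{s-1}$ holds, and your identification of the per-interval cost scale $\tau^{1/s}$ (with $\tau\asymp\eps^s$), are exactly the relevant points, so the proposal is correct and essentially identical to the paper's reasoning.
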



\subsection{Structure of the paper} 
\label{sub:structure_of_the_paper}

In Section~\ref{sec:prelim} we introduce more in detail the setting of the problem. 
In Section~\ref{sec:normal_forms} we present some technical results regarding families of coordinates depending continuously on the base point.
These results will be essential in the sequel.
Section~\ref{sec:cost} collects some useful properties of the costs $\costuno$ and $\costdue$, proved mainly in \cite{prandi2013}, while Section~\ref{sec:first} is devoted to relate the complexities of the control-affine system with those of the associated sub-Riemannian systems, and to prove Theorem~\ref{thm:srtimecomplexity}.
In this section we also prove Proposition~\ref{prop:firstestimate}, that gives a first result in the direction of Theorem~\ref{thm:driftcomplexity} showing when the sub-Riemannian and control-affine complexities coincide.
Finally, the proof of the main result is contained in Sections~\ref{sec:complexity_for_curves_under_h_s_} and \ref{sec:complexity_of_paths_under_h_s_}, for curves and paths respectively.


\section{Preliminaries}
\label{sec:prelim}

Throughout this paper, $M$ is an $n$-dimensional connected smooth manifold. 

\subsection{Sub-Riemannian control systems}
	\label{sec:sr}
	As already stated, a sub-Riemannian (or non-holonomic) control system on a connected smooth manifold $M$ is a control system in the form 
	\begin{equation}\label{cs:sr}\tag{SR} 
		\dot q(t) = \sum_{i=1}^m u_i(t) \,f_i(q(t)),\qquad \quad a.e.\,t\in[0,T]
	\end{equation}
	where $u:[0,T]\to\real^m$ is an integrable and bounded control function and $\vett$ is a family of smooth vector fields on $M$. We let $f_u=\sum_{i=1}^mu_i\,f_i$.
	The value function $\dsrs$ associated with the $\xLone$ cost is in fact a distance, called \emph{Carnot-Carath\'eodory } (or \emph{sub-Riemannian}) \emph{distance}. 
	Namely, for any $q,q'\in M$,
	\[
		\dsrs(q,q') =\inf \int_0^T\sqrt{\sum_{j=1}^m u_j(t)^2}\,dt,
	\]
	where the infimum is taken among all the controls $u\in\lcont$, for some $T>0$, such that its trajectory in \eqref{cs:sr} is such that $q_u(0)=q$ and $q_u(T)=q'$.
	An absolutely continuous curve $\gamma:[0,T]\to M$ is admissible for \eqref{cs:sr} if there exists $u\in \lcont$ such that $\dot\gamma(t)=f_u(t)$.

	Let $\distr$ be the $\xCinfty$-module generated by the vector fields $\vett$ (in particular, it is closed under multiplication by $\xCinfty(M)$ functions and summation). Let $\distr^1=\distr$, and define recursively $\distr^{s+1}= \distr^s+[\distr^s,\distr]$, for every $s\in\nat$. Due to the Jacobi identity $\distr^s$ is the $\xCinfty$-module of linear combinations of all commutators of $f_1,\ldots,f_m$ with length $\le s$. 
	For $q\in M$, let $\distr^s(q)=\{ f(q)\colon\: f\in\distr^s\}\subset T_q M$. We say that $\{f_1,\ldots,f_m\}$ satisfies the \em H\"ormander condition \em (or that it is a bracket-generating family of vector fields) if $\bigcup_{s\ge1} \distr^s(q)=T_q M$ for any $q\in M$. 
	Moreover, $\vett$ defines an \emph{equiregular} sub-Riemannian structure if $\dim\distr^i(q)$ does not depend on the point for any $i\in\nat$.
	In the following we will always assume these two conditions to be satisfied. 

	By the Chow--Rashevsky theorem (see for instance \cite{Agrachev2010}), the hypothesis of connectedness of $M$ and the H\"ormander condition guarantee the finiteness and  continuity of $\dsrs$ with respect to the topology of $M$. 
	Hence, the sub-Riemannian distance, induces on $M$ a metric space structure. The open balls of radius $\eps>0$ and centered at $q\in M$, with respect to $\dsrs$, are denoted by $\bsr (q, \eps)$. 

	We say that a control $u\in\lcont$, $T>0$, is a minimizer of the sub-Riemannian distance between $q,q'\in M$ if the associated trajectory $q_u$ with $q_u(0)=q$ is such that $q_u(T)=q'$ and $\|u\|_{\lcont}=\dsrs(q,q')$. 
	Equivalently, $u$ is a minimizer between $q,q'\in M$ if it is a solution of the free-time optimal control problem, associated with \eqref{cs:sr},
	\begin{equation}\label{eq:l1}
		\|u\|_{\xLone(0,T)}=\int_0^T \sqrt{\sum_{j=1}^m u_j^2(t)}\,dt\rightarrow \min,\qquad q_u(0)=q,\quad q_u(T)=q',\quad T>0.
	\end{equation}
	It is a classical result that, for any couple of points $q,q'\in M$ sufficiently close, there exists at least one minimizer.

	\begin{rmk}
	This control theoretical setting can be stated in purely geometric terms even if we drop the equiregularity assumption. Indeed, it is equivalent to a \em generalized sub-Riemannian \em structure. Such a structure is defined by a rank-varying smooth distribution and a Riemannian metric on it (see \cite{Agrachev2010} for a precise definition). 
	
	In a sub-Riemannian control system, in fact, the map $q\mapsto \Span\{f_1(q),\ldots,f_m(q)\}\subset T_qM$ defines a rank-varying smooth distribution, which is naturally endowed with the Riemannian norm defined, for $v\in \distr(q)$, by
	\begin{equation}
		\label{def:norm}
		{\metr}(q,v)=\inf \left\{ |u|=\sqrt{ u_1^2+\cdots+u_m^2} \colon\: f_u (q)= v \right\}.
	\end{equation}
	The pair $(\distr,\metr)$ is thus a generalized sub-Riemannian structure on $M$. Conversely, every rank-varying distribution is finitely generated, see \cite{Agrachev2008,Agrachev2010,Agrachev2010b,Drager2011}, and thus a sub-Riemannian distance can be written, globally, as the value function of a control system of the type \eqref{cs:sr}.
	\end{rmk}

	Since $\{f_1,\ldots,f_m\}$ is bracket-generating, the values of the sets $\distr^s$ at $q$ form a flag of subspaces of $T_q M$,
	\[
		\distr^1(q)\subset \distr^2(q) \subset\ldots \subset \distr^{r}(q)=T_qM.
	\]
	The integer $r$, which is the minimum number of brackets required to recover the whole $T_qM$, is called \em degree of non-holonomy \em (or \em step\em) of the family $\vett$ at $q$. 
	The degree of non-holonomy is independent of $q$ since we assumed the family $\vett$ to define an equiregular sub-Riemannian structure.
	Let $n_s=\dim \distr^s(q)$ for any $q\in M$. 
	The integer list $(n_1,\ldots,n_{r})$ is called the \em growth vector \em associated with \eqref{cs:sr}. 
	Finally, let $w_1\le\ldots\le w_n$ be the \em weights \em associated with the flag, defined by $w_i=s$ if $n_{s-1}<i\le n_s$, setting $n_0=0$. 
	 
	 For any smooth vector field $f$, we denote its action, as a derivation on smooth functions, by ${f: a\in \xCinfty(M)\mapsto fa\in \xCinfty(M)}$. For any smooth function $a$ and every vector field $f$ with $f\not\equiv 0$ near $q$, their \em (non-holonomic) order \em at $q$ is
	\[
	\begin{split}
		\ord_{q}(a)&=\min\{ s\in\nat\colon\: \exists i_1,\ldots,i_s\in\{1,\ldots,m\} \text{ s.t. } (f_{i_1}\ldots f_{i_s}\,a)(q)\neq 0\},\\	
		\ord_{q}(f)&=\max\{ \sigma\in\integ \colon\: \ord_{q}(fa) \ge \sigma+\ord_{q}(a) \text{ for any } a\in \xCinfty(M)\}.
	\end{split}
	\]
	 In particular it can be proved that $\ord_q(a)\ge s$ if and only if $a(q')=\bigo(\dsrs(q',q))^s$.	 
	 	 
	\begin{defn}
		A \em system of privileged coordinates \em at $q$ for $\vett$ is a system of local coordinates $z=(z_1,\ldots,z_n)$ centered at $q$ and such that $\ord_q(z_i)=w_i$, $1\le i\le n$.
	\end{defn}

	Let $q\in M$.
	A set of vector fields $\{f_1,\ldots,f_n\}$ such that 
	\begin{equation}
		    \{f_1(q),\,\ldots, \, f_n(q)\} \text{ is a basis of }T_qM, \quad\text{ and }\quad
		    f_i\in\distr^{w_i} \text{ for }i=1,\,\ldots,\,n,
	\end{equation}
	is called an adapted frame at $q$.
	We remark that to any system of privileged coordinates $z$ at $q$ is associated a (non-unique) adapted frame at $q$ such that $\partial_{z_i}=z_*f_i(q)$ (i.e., privileged coordinates are always linearly adapted to the flag).

	For any ordering $\{i_1,\ldots,i_n\}$, the inverse of the local diffeomorphisms 
	\begin{gather*}
		(z_1,\ldots,z_n)\mapsto e^{z_{i_1}\,f_{i_1}+\cdots+z_{i_n}\,f_{i_n}}(q),\qquad
		(z_1,\ldots,z_n)\mapsto e^{z_{i_n}\,f_{i_n}}\circ \cdots \circ e^{z_{i_1}\,f_{i_1}}(q),
	\end{gather*}
	defines privileged coordinates at $q$, called \em canonical coordinates of the first kind \em and of the \em second kind\em, respectively. 
	We remark that, for the canonical coordinates of the second kind, it holds $z_* f_{i_n}(z)\equiv\partial_{z_{i_n}}$.

	We recall the celebrated Ball-Box Theorem, that gives a rough description of the shape of small sub-Riemannian balls.
		
	\begin{thm}[Ball-Box Theorem]\label{thm:srballbox} 
	Let $z=(z_1,\ldots,z_n)$ be a system of privileged coordinates at $q\in M$ for $\vett$. Then there exist $C,\eps_0>0$ such that for any $\eps<\eps_0$, it holds
	\[
		\bbox {\frac 1 C \eps} \subset \bsr ({q}, \eps) \subset \bbox {C \eps},
	\] 
	where, $\bsr (q, \eps)$ is identified with its coordinate representation $z(\bsr (q, \eps))$ and,        for any $\eta>0$, we let
	\begin{equation}\label{def:box}
		\bbox \eta = \{z\in\real^n\colon\: |z_i|\le \eta^{w_i} \},
	\end{equation}

	\end{thm}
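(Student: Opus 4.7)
The plan is to prove the two inclusions separately, both resting on the algebraic structure forced by the choice of privileged coordinates.

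For the outer inclusion $\bsr(q,\eps)\subset\bbox{C\eps}$, the key observation is that in privileged coordinates each $f_i$ has non-holonomic order $-1$ and each $z_j$ has order $w_j$, so the component $f_i z_j$ is a smooth function of order at least $w_j-1$ at $q$. This yields a local bound $|f_i z_j(z)|\le C'\rho(z)^{w_j-1}$, where $\rho(z)=\max_k|z_k|^{1/w_k}$ is the natural homogeneous pseudo-norm. Given a control $u$ with $\|u\|_{\xLone}=L\le\eps$ and trajectory $z_u:[0,T]\to\real^n$ starting at the origin, I would argue by induction on the weight $w_j$: for weight-one directions, $|\dot z_j|\le C'|u|$ integrates to $|z_j|\le C'L$; for higher weights, combining the lower-weight bounds with a Gronwall-type inequality gives $|z_j(t)|\le C L^{w_j}$, hence $z_u(T)\in\bbox{C\eps}$.

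For the inner inclusion $\bbox{\eps/C}\subset\bsr(q,\eps)$, the strategy is to exhibit admissible controls that move along each coordinate direction $\partial_{z_j}$ at cost proportional to the $w_j$-th root of the displacement. Fix an adapted frame $\{X_1,\ldots,X_n\}$ at $q$ with $X_j\in\distr^{w_j}$ and $z_*X_j(q)=\partial_{z_j}$; each $X_j$ is, up to a smooth factor, an iterated Lie bracket of length $w_j$ of the $f_i$'s. The classical bracket-word construction (see e.g.\ \cite{Gromov1996,Jean2001}) provides, for each $\eta>0$, an admissible control of $\xLone$-norm $O(\eta)$ whose flow approximates $e^{\eta^{w_j}X_j}(q)$ with a sub-Riemannian error of order $\smallo(\eta^{w_j})$. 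Writing a target point $z\in\bbox{\eps/C}$ in canonical coordinates of the second kind as $e^{z_n X_n}\circ\cdots\circ e^{z_1 X_1}(q)$ and concatenating bracket-word controls for each factor, one obtains an admissible control reaching $z$ whose total $\xLone$-cost is bounded by $\eps$, provided $C$ is large enough.

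The main obstacle is the inner inclusion: one must verify that the errors accumulated by concatenating the bracket-word approximations do not spoil the $O(\eps)$ bound on total cost, uniformly in the target $z\in\bbox{\eps/C}$. The cleanest route is through the nilpotent approximation $\hat f_1,\ldots,\hat f_m$ of $\{f_1,\ldots,f_m\}$ at $q$: by homogeneity under the anisotropic dilation $(z_1,\ldots,z_n)\mapsto(\lambda^{w_1}z_1,\ldots,\lambda^{w_n}z_n)$, the ball-box inclusions hold exactly for the nilpotent system, while the order estimate $\ord_q(f_i-\hat f_i)\ge 0$ controls the perturbation between true and nilpotent trajectories at scale $\eps$. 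A compactness-plus-continuity argument for the endpoint map, rescaled by the same dilation, then transfers the nilpotent ball-box bounds to the original system and yields both inclusions with uniform constants.
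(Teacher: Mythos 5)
The paper does not prove Theorem~\ref{thm:srballbox}: it is recalled as a classical result (due to Nagel--Stein--Wainger, Gromov, Bella\"iche, and others) and used as a black box, so there is no internal proof to compare against. Your sketch is the standard Bella\"iche-style argument and the overall architecture is correct, but two steps are glossed over in a way that would not survive a careful write-up.

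For the outer inclusion, the estimate $|f_i z_j(z)|\le C'\rho(z)^{w_j-1}$ is right, but the claimed ``induction on weight plus Gronwall'' does not close as stated: the right-hand side $\rho(z)^{w_j-1}$ contains the term $|z_j|^{(w_j-1)/w_j}$, so the differential inequality for $z_j$ is not driven only by lower-weight coordinates. The standard fix is a bootstrap (continuation) argument: posit $\rho(z(s))\le C'L(s)$ on a maximal subinterval, feed this into the integral inequalities for every $j$ simultaneously, observe that the exponent $(w_j-1)/w_j<1$ makes the resulting self-improvement consistent for a suitable choice of $C'$, and conclude the a priori bound persists. This is morally what you mean, but ``Gronwall'' is misleading since the inequality is genuinely nonlinear and coupled.

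For the inner inclusion, the nilpotent-approximation route is indeed the cleanest, and your homogeneity observation is the key point; but ``compactness-plus-continuity of the endpoint map'' is not enough. Continuity of a family of maps converging to a limit map does not by itself give that the images of the approximants contain a fixed neighborhood; you need an open-mapping ingredient. The usual way is to fix a finite family of nilpotent controls whose endpoint map has a surjective differential (or nonzero topological degree) onto a neighborhood of the origin, and then invoke stability of surjectivity/degree under the $C^0$-perturbation coming from $\delta_{1/\eps*}f_i\to\hat f_i$. Also, saying the ball--box inclusions ``hold exactly'' for the nilpotent system is imprecise: the nilpotent unit ball is not a box, but by $\delta_\lambda$-homogeneity the two-sided box bounds it satisfies at scale $1$ propagate to all scales with the same constants, which is what you actually use. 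Finally, in the bracket-word alternative, the claimed error $\smallo(\eta^{w_j})$ must be measured in the privileged-coordinate pseudo-norm (or equivalently against the nilpotent metric), not in the sub-Riemannian distance itself, or the argument becomes circular.
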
	
	
	\begin{rmk}\label{rmk:bbuniform}
	Let $N\subset M$ be compact and let $\{z^q\}_{q\in N}$ be a family of systems of privileged coordinates at $q$ depending continuously on $q$.	    
	Then there exist uniform constants $C,\eps_0>0$ such that the Ball-Box Theorem holds for any $q\in N$ in the system $z^q$.
	\end{rmk}
	
\subsection{Control-affine systems}
\label{sub:control-affine_systems}

Let $\drift$ and $\vett$ be smooth vector fields on $M$ and, for some $\maxtime>0$, define $\contset=\bigcup_{0< T < \maxtime} \lcont$. Consider the control-affine control system
\begin{equation}\tag{D}\label{cs:d}
	\dot q(t) = \drift(q(t))+\sum_{j=1}^m u_j\,f_j(q(t)),\quad u\in\contset.
\end{equation}

An absolutely continuous curve $\gamma:[0,T]\to M$ is admissible for \eqref{cs:d} if $\dot\gamma(t)=\drift(\gamma(t))+f_{u(t)}(\gamma(t))$ for some control $u\in\lcont$. 
Observe, however, that contrary to what happens in the sub-Riemannian case, the admissibility for \eqref{cs:d} is not invariant under time reparametrization, e.g., a time reversal.
Thus there is no canonical choice for the cost, and we will focus on the two costs given in \eqref{int:costfunctions}. 

In the rest of the paper we will always assume the following hypotheses to be satisfied.
\begin{enumerate}
\renewcommand{\labelenumi}{(H\arabic{enumi})}
\renewcommand{\theenumi}{(H\arabic{enumi})}
	\item \label{hyp:1}	\em Equiregularity: \em $\dim\Delta^k(q)$ does not depend on $q\in M$;
	\item \label{hyp:2} \em Strong H\"ormander condition: \em there exists $r\in\nat$ such that $\Delta^r(q)=T_qM$ for any $q\in M$;
\end{enumerate}
Hypotheses~\ref{hyp:1} is made for technical reasons and to lighten the notation. 
It would be possible to avoid it through a desingularization procedure similar to the one in \cite{Jean2001a}.
On the other hand, \ref{hyp:2} is essential to apply our methods.

We will also often assume that, for some $s\in\nat$, the following ``equiregularity" for \eqref{cs:d} holds.
\begin{equation}\tag{H$_s$}\label{hyp:h0}
	\drift\subset\distr^s\setminus\distr^{s-1}.
\end{equation}
Due to hypothesis~\ref{hyp:1}, this is equivalent to the fact that $\ord_q(\drift)=-s$ for any $q\in M$.
	
For any $u\in\lcont$, by the variation formula (see \cite{Sachkov2004}), it holds
\begin{equation}\label{eq:split}
	\crex_0^T \bigg( \drift+\sum_{i=1}^m u_i(t)\, f_i \bigg)\,dt  = e^{T\drift}\circ \crex_0^T  \sum_{i=1}^m u_i(t)\, (e^{-t\drift})_* f_i\,dt.
\end{equation}
This shows that a control steering system \eqref{cs:d} from $p\in M$ to $q\in M$ in time $T>0$, steers from $p$ to $e^{-T\drift}q$ the time-dependent control system
\begin{equation}\tag{TD}\label{cs:td}
	\dot q(t) = \sum_{j=1}^m u_j(t)\,(e^{-t\drift})_*f_j(q(t)).
\end{equation}
Sometimes proofs will be eased by considering \eqref{cs:td} instead of \eqref{cs:d}, due to the linearity w.r.t.\ the control of the former.

In the following we will often consider also the two sub-Riemannian control systems associated with \eqref{cs:d}, called respectively \em small \em and \em big, \em and defined as
\begin{gather}
	\tag{SR-s}\label{cs:srs}
		\dot q(t) = \sum_{j=1}^m u_j(t)\,f_j(q(t)), \\
	\tag{SR-b}\label{cs:srb}
		\dot q(t) = u_0(t)\drift(q(t))+\sum_{j=1}^m u_j(t)\,f_j(q(t)).
\end{gather}
We will denote by $\dsrs$ and $\bsr$ the Carnot-Carath\'eodory metric and metric balls, respectively, associated with \eqref{cs:srs}. 
This distance is well-defined due to Hypothesis~\ref{hyp:2}

\section{Continuous families of coordinates} 
\label{sec:normal_forms}

In this section we consider properties of families of coordinates depending continuously on the points of the curve or path, in order to be able to exploit Remark~\ref{rmk:bbuniform}.

From the definition of privileged coordinates, we immediately get the following.
\begin{prop}
	\label{prop:privilegedproperty}
	Let $\gamma:[0,T]\to M$ be a path.
	Let $t>0$ and let $z$ be a system of privileged coordinates at $\gamma(t)$ for $\vett$.
    Then, there exists $C>0$ such that
    \begin{equation}
    	\label{eq:stimawjlep}
    	|z_j(\gamma(t+\xi))|\le C|\xi| \quad \text{for any } j=1,\ldots,n \text{ and any }t+\xi\in[0,T]. 
    \end{equation}
    Moreover, if for $k\in\nat$ it holds that $\dot\gamma(t)\notin\Delta^{k-1}(\gamma(t))$, then there exist $C_1,C_2,\xi_0>0$ and a coordinate $z_\alpha$, of weight $\ge k$, such that for any $t\in[0,T]$ and any $|\xi|\le \xi_0$ with $t+\xi\in[0,T]$ it holds
	\begin{equation}
		\label{eq:stimacrescita}
		C_1 \xi \le z_\alpha(\gamma(t+\xi)) \le C_2 \xi.
	\end{equation}
	Finally, if $\dot\gamma(t)\in \distr^k(\gamma(t))\setminus \Delta^{k-1}(\gamma(t))$, the coordinate $z_\alpha$ can be chosen to be of weight $k$.
\end{prop}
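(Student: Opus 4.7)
The plan is to Taylor-expand the composition $z\circ\gamma$ at the instant $t$ where the privileged coordinates $z$ are centered, and to exploit the fact—standard for privileged coordinate systems—that the coordinate frame $\{\partial_{z_1},\ldots,\partial_{z_n}\}|_{\gamma(t)}$ is \emph{linearly adapted to the flag} at $\gamma(t)$, i.e., $\Delta^j(\gamma(t))=\mathrm{span}\{\partial_{z_i}|_{\gamma(t)}\colon w_i\le j\}$ for every $j=1,\ldots,r$. This property is the only non-trivial ingredient and it is already built into the definition of privileged coordinates recalled in Section~\ref{sec:sr}.

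The bound \eqref{eq:stimawjlep} is routine: each $z_j\circ\gamma:[0,T]\to\real$ is smooth and vanishes at $s=t$, so by the mean value theorem $|z_j(\gamma(t+\xi))|\le C|\xi|$ with $C=\max_{j}\sup_{s\in[0,T]}|(z_j\circ\gamma)'(s)|$; note that the precise value of $C$ depends on $z$ and on $\gamma$, not on $t$, but this is all that is needed since $z$ is fixed at the start.

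For \eqref{eq:stimacrescita} the key step is to decompose $\dot\gamma(t)=\sum_{i=1}^n c_i\,\partial_{z_i}|_{\gamma(t)}$. Linear adaptedness translates the geometric hypothesis $\dot\gamma(t)\notin \Delta^{k-1}(\gamma(t))$ into the purely algebraic statement that $c_\alpha\ne 0$ for at least one index $\alpha$ with $w_\alpha\ge k$. I would pick such an $\alpha$ (replacing $z_\alpha$ by $-z_\alpha$ if necessary so that $c_\alpha>0$) and use the first-order Taylor expansion
\[
z_\alpha(\gamma(t+\xi))=c_\alpha\,\xi+O(\xi^2),
\]
valid because $z_\alpha\circ\gamma$ is smooth with value $0$ at $s=t$. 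Choosing $\xi_0$ small enough that the remainder is dominated by, say, $c_\alpha/2$ times $|\xi|$, one then reads off the two-sided estimate $C_1\xi\le z_\alpha(\gamma(t+\xi))\le C_2\xi$ with $C_1=c_\alpha/2$ and $C_2=3c_\alpha/2$ (for $\xi$ of appropriate sign).

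The final refinement—that $z_\alpha$ can be taken of weight exactly $k$ when $\dot\gamma(t)\in\Delta^k(\gamma(t))\setminus\Delta^{k-1}(\gamma(t))$—follows from the same linear-adaptedness principle applied in the opposite direction: now $\dot\gamma(t)\in\Delta^k(\gamma(t))$ forces $c_i=0$ for all $i$ with $w_i>k$, so the index $\alpha$ selected above must in fact satisfy $w_\alpha=k$. I do not foresee a serious obstacle; the only point to handle carefully is bookkeeping the linear adaptedness step, since it is precisely what couples the algebraic filtration on $T_{\gamma(t)}M$ to the coordinate functions $z_\alpha$ and thereby allows one to translate the Lie-algebraic hypothesis on $\dot\gamma(t)$ into a concrete bound on a specific coordinate.
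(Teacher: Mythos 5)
Your proposal is correct and takes essentially the same route as the paper's proof: both establish \eqref{eq:stimawjlep} from the smoothness of $z_j\circ\gamma$ via the fundamental theorem of calculus / mean value theorem, and both derive \eqref{eq:stimacrescita} and the weight-$k$ refinement by decomposing $\dot\gamma(t)$ in the adapted frame associated to $z$, using linear adaptedness ($z_*f_i(\gamma(t))=\partial_{z_i}$) to convert the hypothesis $\dot\gamma(t)\notin\Delta^{k-1}(\gamma(t))$ (resp.\ $\in\Delta^k\setminus\Delta^{k-1}$) into nonvanishing (resp.\ vanishing) of the relevant coordinate coefficients, and then applying a first-order Taylor expansion. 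The only cosmetic difference is that you spell out the Taylor-remainder bookkeeping explicitly, where the paper leaves that step implicit after establishing $(z_\alpha)_*\dot\gamma(t)\neq 0$.
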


\begin{proof}
    By the smoothness of $\gamma$, there exists a constant $C>0$ such that $|(z_j)_* \dot\gamma(t+\xi)| \le C$ for any $j=1,\ldots,n$ and any $t+\xi\in[0,T]$.
    Thus, we obtain
    \[
    	|z_j(\gamma(t+\xi))| \le \left| \int_t^{t+\xi} |(z_j)_* \dot\gamma(t+\eta)|\,d\eta \right| \le C\,|\xi|.
    \]
    
    Let us prove \eqref{eq:stimacrescita}.
    Let $\{f_1,\ldots,f_n\}$ be an adapted basis associated with the system of coordinates $z$.
    In particular it holds that $z_*f_i(\gamma(t))=\partial_{z_i}$.
    Moreover, let $k'\ge k$ be such that $\dot\gamma(t)\in\distr^{k'}(\gamma(t))\setminus\distr^{k'-1}(\gamma(t))$ and write $\dot\gamma(t)=\sum_{w_i\le k'} a_i(t) f_{i}(\gamma(t))$ for some $a_i\in\xCinfty([0,T])$.
    Hence
    \[
    	z_* \dot\gamma(t)=\sum_{w_i\le k'} a_i(t)\, z_* f_{i}(\gamma(t)) = \sum_{w_i\le k'} a_i(t) \,\partial_{z_i}.
    \]
    Since there exists $i$ with $w_i=k'$ such that $a_i(t)\neq 0$, this implies that $(z_i)_* \dot\gamma(t)\neq0$.
    Since $k'\ge k$, we have then proved \eqref{eq:stimawjlep}.
\end{proof}

As already observed in Remark~\ref{rmk:bbuniform}, in order to be apply the estimates of Theorem~\ref{thm:srballbox} uniformly on $\gamma$ we need to consider a continuous family of coordinates $\{z^t\}_{t\in[0,T]}$ such that each $z^t$ is privileged at $\gamma(t)$ for $\vett$.
We will call such a family a \emph{continuous coordinate family for} $\gamma$.

Let us remark that, fixed any basis $\{f_1,\ldots,f_n\}$ adapted to the flag in a neighborhood of $\gamma([0,T])$, letting $z^t$ be the inverse of the diffeomorphism
\begin{equation}
	\label{eq:normcoordinatedrift}
	(z_1,\ldots,z_n)\mapsto e^{z_1 f_1}\circ\ldots \circ e^{z_{n} f_{n}}(\gamma(t)),
\end{equation}
defines a continuous coordinate family for $\gamma$.

The following proposition precises Proposition~\ref{prop:privilegedproperty}.

\begin{prop}
	\label{prop:normcoordpath}
	Let $\gamma:[0,T]\to M$ be a path and let $k\in\nat$ such that $\dot\gamma(s)\in\distr^k(\gamma(s))$ for any $t\in[0,T]$.
    Then, for any continuous coordinate family $\{z^t\}_{t\in[0,T]}$ for $\gamma$ there exists constants $C,\xi_0>0$ such that for any $t\in[0,T]$ and $0\le \xi \le\xi_0$ it holds
    \begin{equation}
    	\label{eq:stimak2}
    	|z_j^{t}(\gamma(t+\xi))|\le C \xi \quad \text{ if }w_j\le k \quad\text{ and }\quad
    	|z_j^{t}(\gamma(t+\xi))|\le C \xi^{\frac{w_j} k} \text{ if  }w_j>k. 
    \end{equation}
\end{prop}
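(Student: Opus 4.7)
The case $w_j\le k$ is already contained in Proposition~\ref{prop:privilegedproperty}: the linear bound $|z_j^t(\gamma(t+\xi))|\le C\xi$ immediately implies $|z_j^t(\gamma(t+\xi))|\le C\xi^{w_j/k}$ (and is in fact the stronger of the two bounds, since $\xi\le\xi^{w_j/k}$ when $w_j\le k$ and $\xi\le 1$). Therefore the real content of the proposition is the improved bound $|z_j^t(\gamma(t+\xi))|\le C\xi^{w_j/k}$ for $w_j>k$, which must genuinely exploit the hypothesis $\dot\gamma(s)\in\distr^k(\gamma(s))$ and not merely the smoothness of $\gamma$.

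My plan is to work directly in the privileged coordinates. Fix an adapted frame $\{X_1,\ldots,X_n\}$ in a neighborhood of the compact set $\gamma([0,T])$, with $X_i\in\distr^{w_i}$. The hypothesis $\dot\gamma(s)\in\distr^k(\gamma(s))$ together with equiregularity allows us to write $\dot\gamma(s)=\sum_{w_i\le k}a_i(s)X_i(\gamma(s))$ with smooth, uniformly bounded coefficients $a_i$. The key structural property of privileged coordinates is that any vector field in $\distr^m$ has coordinate components $(z^t_*X)_j$ of non-holonomic order at least $w_j-m$ at the origin, which in terms of the privileged quasi-norm $|z|_0:=\max_j|z_j|^{1/w_j}$ yields
\[
|(z^t_*X_i)_j(z)|\le C\,|z|_0^{\max(0,\,w_j-w_i)},
\]
with $C$ uniform in $t$ thanks to the continuity of the family $\{z^t\}$ and compactness of $[0,T]$ (the same principle as in Remark~\ref{rmk:bbuniform}). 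Setting $z(s):=z^t(\gamma(s))$ and $\eta(s):=|z(s)|_0$, the ODE $\dot z(s)=\sum_i a_i(s)(z^t_*X_i)(z(s))$ yields, for $w_j>k$ and $\eta\le 1$, the differential inequality $|\dot z_j(s)|\le C''\eta(s)^{w_j-k}$ (the dominant term corresponding to $w_i=k$).

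Combining this with the linear bound $|z_j(\tau)|\le C(\tau-t)$ for $w_j\le k$ already supplied by Proposition~\ref{prop:privilegedproperty}, I would close a bootstrap on the nondecreasing continuous function $\bar\eta(s):=\sup_{\tau\in[t,s]}\eta(\tau)$. Integrating gives $|z_j(s)|\le C''(s-t)\bar\eta(s)^{w_j-k}$ for $w_j>k$, so the ansatz $\bar\eta(s)\le C_0(s-t)^{1/k}$ is self-consistent for $C_0$ large enough; the arithmetic check is the identity $(s-t)^{1/w_j}\cdot(s-t)^{(w_j-k)/(kw_j)}=(s-t)^{1/k}$, which shows that both sides scale like $(s-t)^{1/k}$. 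Since $\eta(t)=0$ and $\eta$ is continuous, a standard sup-principle closes the bootstrap for $s-t$ small, uniformly in $t\in[0,T]$. Raising $\bar\eta(s)\le C_0(s-t)^{1/k}$ to the power $w_j$ then gives the desired estimate. The main technical point is verifying the uniformity of all constants in $t$, which is exactly where the continuity of the coordinate family $\{z^t\}$ and the compactness of $\gamma([0,T])$ are essential; the bootstrap itself is elementary once the order estimate is in place.
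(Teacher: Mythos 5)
Your proposal is correct and follows essentially the same route as the paper's proof: both decompose $\dot\gamma$ in an adapted frame using $\dot\gamma(s)\in\distr^k(\gamma(s))$, invoke the weighted-order estimate $|f_i^j(z)|\le C\|z\|^{(w_j-w_i)^+}$ on the frame's coordinate components, and close the resulting quasi-norm inequality by a bootstrap whose constants are uniformized by compactness and continuity of the coordinate family. The only difference is cosmetic: the paper phrases the closing step as a proof by contradiction (assuming $\max_\rho\|z^t(\gamma(t+\rho))\|/\xi^{1/k}$ unbounded and deriving $b_\nu\le Cn\,b_\nu^{1-k/w_j}\to 0$), whereas you run a direct sup-ansatz argument; these are interchangeable formulations of the same self-improvement.
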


\begin{proof}
	Fix $t\in[0,T]$ and let $\{f_1,\ldots,f_n\}$ be an adapted basis associated with the privileged coordinate system $z^t$.
	To lighten the notation, we do not explicitly write the dependence on time of such basis.
	Writing $z^t_*f_i(z)=\sum_{j=1}^n f_i^j(z)\partial_{z^t_j}$, it holds that $f_i^j$ is of weighted order $\ge w_j-w_i$, and hence there exists a constant $C>0$ such that 
	\begin{equation}
		\label{eq:ordine}
		|f_i^j(z)|\le C \|z\|^{(w_j-w_i)^+}.
	\end{equation}
	Here $\|z\|$ is the pseudo-norm $|z_1|^{\frac 1 w_1}+\cdots+|z_n|^{\frac 1 w_n}$ and $h^+=\max\{0,h\}$ for any $h\in\real$.
	Due to the compactness of $[0,T]$, the constant $C$ can be choosen to be uniform w.r.t.\ the time.

    Since $\dot\gamma(\xi)\in\distr^k(\gamma(\xi))$ for $\xi>0$, there exist functions $a_i\in\xCinfty([0,T])$ such that 
    \begin{equation}
    	\label{eq:dotgamma}
        \dot\gamma(\xi)=\sum_{w_i\le k} a_i(\xi) f_i(\gamma(\xi)) \qquad \text{ for any }\xi\in[0,T].
    \end{equation}    
    Observe that, for any $t\in[0,T]$, it holds that 
    \begin{equation}
    	\label{eq:ai}
    	\frac 1 \xi \int_t^{t+\xi} |a_i(\eta)|\,d\eta = |a_i(t)| + \bigo(\xi) \quad \text{as }\xi\downarrow0,
    \end{equation}
    where $\bigo(\xi)$ is uniform w.r.t. $t$.
    In particular, for any $\xi$ sufficiently small, this integral is bounded.

	By \eqref{eq:dotgamma}, for any $t\in[0,T]$ we get
	\begin{equation}
		\label{eq:dotgammacoord}
		z^t_j(\gamma(t+\xi))= \sum_{w_i\le k} \int_{t}^{t+\xi}a_i(\eta) f_i^j(z^t(\gamma(\eta)))\,d\eta,\qquad \text{for any }t+\xi\in[0,T]
	\end{equation}
	Then, applying \eqref{eq:ordine} we obtain
	\begin{equation}
		\label{eq:gammacoord}
		\begin{split}
			\max_{\rho\in[0,\xi]} |z^t_j(\gamma(t+\rho))|
				&\le\sum_{w_i\le k} \int_t^{t+\xi} |a_i(\eta)| \,|f_i^j(z^t(\gamma(\eta)))| \,d\eta\\
				&\le C\left(\max_{\rho\in[0,\xi]} \|z^t(\gamma(t+\rho))\|\right)^{(w_j-k)^+}  \sum_{w_i\le k}\int_t^{t+\xi} |a_i(\eta)| \,d\eta.
		\end{split}
	\end{equation}
	Up to enlarging the constant $C$, this and \eqref{eq:ai} yield
	\begin{equation}
		\label{eq:stimavarrho}
		\begin{split}
				\frac{\max_{\rho\in[0,\xi]} |z^t_j(\gamma(t+\rho))|}{\xi^{w_j}} 
					&\le C \left(\frac{\max_{\rho\in[0,\xi]}\|z^t(\gamma(t+\varrho^k))\|}{\xi}\right)^{(w_j-k)^+} \sum_{w_i\le k} \frac{1}{\xi^k} \int_t^{t+\xi^k} |a_i(\eta)|\,d\eta \\
					&\le C \left(\frac{\max_{\rho\in[0,\xi]}\|z^t(\gamma(t+\varrho^k))\|}{\xi}\right)^{(w_j-k)^+}.
			\end{split}
	\end{equation}

	Clearly, if ${\max_{\rho\in[0,\xi]}\|z^t(\gamma(t+\rho^k))\|}/{\xi}\le C$ uniformly in $t$, inequality \eqref{eq:stimavarrho} proves \eqref{eq:stimak2}.
	Then, let us assume by contradiction that ${\max_{\rho\in[0,\xi]}\|z^t(\gamma(t+\rho^k))\|}/{\xi}$ is unbounded as $\xi\downarrow 0$.
	For any $\xi$ let $\bar\xi\in[0,\xi]$ to be such that $\|z^t(\gamma(t+\bar\xi^k))\|=\max_{\rho\in[0,\xi]}\|z^t(\gamma(t+\rho^k))\|$.
	Then, there exists a sequence $\xi_\nu\rightarrow+\infty$ such that 
	\[
		b_\nu = \frac {|z_j^t(\gamma(t+\bar\xi_\nu^k))|} {\xi_\nu^{w_j}}\longrightarrow+\infty \quad\text{ and }\quad \frac 1 {n} \frac{\|z^t(\gamma(t+\bar\xi_\nu^k))\|}{\xi_\nu}\le b_\nu^{\frac 1 {w_j}}\le \frac{\|z^t(\gamma(t+\bar\xi_\nu^k))\|}{\xi_\nu}.
	\]
	Moreover, by \eqref{eq:stimavarrho}, it has to hold that $w_j>k$.
	Then, again by \eqref{eq:stimavarrho}, follows that
	\[
		{b_\nu} \le Cn\, b_\nu^{1-\frac k {w_j}} \longrightarrow 0 \quad \text{as }\nu\rightarrow+\infty.
	\]
	This contradicts the fact that $b_\nu\rightarrow+\infty$, and proves that there exists $\xi_0>0$, a priori depending on $t$, such that ${\|z^t(\gamma(t+\bar\xi^k))\|}/{\xi}\le C$ for any $\xi<\xi_0$.
	Since $[0,T]$ is compact, both constants $\xi_0,C$ are uniform for $t\in[0,T]$, thus completing the proof of \eqref{eq:stimak2} and of the proposition.
\end{proof}

We now focus on coordinate systems adapted to the drift.
In particular, if for some $s\in\nat$ it holds that $\drift\subset\distr^s\setminus\distr^{s-1}$, it makes sense to consider the following definition.

\begin{defn}
	\label{def:normdrift}
    A \emph{privileged coordinate system adapted to $\drift$ at $q$} is a system of privileged coordinates $z$ at $q$ for $\vett$ such that there exists a coordinate $z_\ell$ such that $z_* \drift\equiv\partial_{z_\ell}$.
\end{defn}

Observe that completing $\drift$ to an adapted basis $\{f_1,\ldots,\drift,\ldots,f_n\}$ allows us to consider the coordinate system adapted to $\drift$ at $q$, given by the inverse of the diffeomorphism
\begin{equation}
	\label{eq:normcoordinateonlydrift}
	(z_1,\ldots,z_n)\mapsto e^{z_\ell \drift}\circ\ldots \circ e^{z_{n} f_{n}}(q).
\end{equation}

The following definition combines continuous coordinate families for a path $\gamma:[0,T]\to M$ with coordinate systems adapted to a drift.

\begin{defn}
	\label{def:normadapted}
    A \emph{continuous coordinate family for $\gamma$ adapted to $\drift$} is a continuous coordinate family $\{z^t\}_{t\in[0,T]}$ for $\gamma$, such that each $z^t$ is a privileged coordinate system adapted to $\drift$ at $\gamma(t)$.
\end{defn}

Such coordinates systems can be built as per \eqref{eq:normcoordinateonlydrift}, letting the point $q$ vary on the curve.

Recall that $\drift\subset\distr^s\setminus\distr^{s-1}$ for some $s$, and consider a path $\gamma:[0,T]\to M$ such that $\dot\gamma(t)\in\distr^s(\gamma(t))$ and that $\drift(\gamma(t))\neq\dot\gamma(t)\mod\distr^{s-1}(\gamma(t))$ for any $t\in[0,T]$.
In this case, there exists $f_\alpha\subset\distr^s\setminus\distr^{s-1}$ and two functions $\varphi_\ell,\varphi_\alpha\in C^\infty([0,T])$, $\varphi_\alpha\ge 0$, such that 
\[
	\dot\gamma(t)\mod\distr^{s-1}(\gamma(t))=\varphi_\ell(t)\drift(\gamma(t))+\varphi_\alpha (t)f_\alpha(\gamma(t)).
\]
Moreover, by the assumption $\drift(\gamma(t))\neq\dot\gamma(t)\mod\distr^{s-1}(\gamma(t))$, if $\varphi_\ell(t)=1$ then $\varphi_\alpha(t)> 0$.
Then, using $f_\alpha$ as an element of the adapted basis used to define a continuous coordinate family for $\gamma$ adapted to $\drift$, it holds $(z^t_i)_*\dot\gamma( t)=\varphi_i(t)$ for $i=\alpha,\ell$ and any $t\in[0,T]$.
The following lemma will be essential to study this case.

\begin{lem}
	\label{lem:partizione}
	Assume that there exists $s\in\nat$ such that $\drift\subset\distr^s\setminus\distr^{s-1}$.
	Let $\gamma:[0,T]\to M$ be a path such that $\dot\gamma(t)\in\distr^s(\gamma(t))$ and such that  $\drift(\gamma(t))\neq\dot\gamma(t)\mod\distr^{s-1}(\gamma(t))$ for any $t\in[0,T]$. 
	Consider the continuous coordinate family $\{z^t\}_{t\in[0,T]}$ for $\gamma$ adapted to $\drift$ defined above. 
	Then, there exist constants $\xi_0,\rho,m>0$ and a coordinate $\alpha\neq \ell$ of weight $s$ such that for any $t\in[0,T]$ and $0\le \xi\le\xi_0$, it holds
	\begin{gather}
		\label{eq:prope1}
	    (z_\ell^t)_*\dot\gamma(t+\xi)\le 1-\rho \quad \text{if }t\in E_1=\{\varphi_\ell<1-2\rho\},\\
	    \label{eq:prope2}
	    (z_\alpha^t)_*\dot\gamma(t+\xi)\ge m \quad \text{if }t\in E_2=\{1-2\rho\le\varphi_\ell\le 1+2\rho\}, \\
	    \label{eq:prope3}
	    (z_\ell^t)_*\dot\gamma(t+\xi)\ge 1+\rho \quad \text{if }t\in E_3=\{\varphi_\ell>1+2\rho\}.
	\end{gather}
	In particular, it holds that $E_1\cup E_2\cup E_3=[0,T]$.
\end{lem}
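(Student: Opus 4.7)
The plan is to combine uniform continuity on the compact time interval with the nondegeneracy hypothesis $\drift(\gamma(t))\neq\dot\gamma(t)\mod\distr^{s-1}(\gamma(t))$. The key input is that, because $\{z^t\}_{t\in[0,T]}$ is continuous in $t$ by construction, the map $(t,\xi)\mapsto(z^t_i)_*\dot\gamma(t+\xi)$ is continuous, and at $\xi=0$ it equals $\varphi_i(t)$ for $i\in\{\ell,\alpha\}$, by the discussion preceding the lemma. Uniform continuity on $[0,T]\times[0,\xi_0]$ will absorb the $\xi$-perturbation; the real work is to obtain a uniform lower bound for $\varphi_\alpha$ on the set where $\varphi_\ell\approx 1$.

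First I would note that for any $\rho>0$ the three sets $E_1,E_2,E_3$ cover $[0,T]$ by definition, which is the final assertion of the lemma. Next I would prove that there exist $\rho_0, m_0>0$ such that, for every $0<\rho\leq\rho_0$, $\varphi_\alpha(t)\ge 2m_0$ on $E_2$. Suppose not: there would exist sequences $\rho_k\downarrow 0$ and $t_k\in[0,T]$ with $|\varphi_\ell(t_k)-1|\le 2\rho_k$ and $\varphi_\alpha(t_k)\le 1/k$. By compactness of $[0,T]$, after extraction $t_k\to t^*$; continuity of $\varphi_\ell,\varphi_\alpha$ and the sign constraint $\varphi_\alpha\ge 0$ give $\varphi_\ell(t^*)=1$ and $\varphi_\alpha(t^*)=0$. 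But then $\dot\gamma(t^*)=\drift(\gamma(t^*))\mod\distr^{s-1}(\gamma(t^*))$, contradicting the hypothesis.

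Having fixed $\rho\in(0,\rho_0]$ and $m=m_0$, I would then shrink $\xi_0$ using uniform continuity of $(t,\xi)\mapsto(z_i^t)_*\dot\gamma(t+\xi)$ on the compact set $\{(t,\xi):t\in[0,T],\ 0\le\xi\le\xi_0,\ t+\xi\in[0,T]\}$ so that
\[
\bigl|(z_i^t)_*\dot\gamma(t+\xi)-\varphi_i(t)\bigr|\le\min\{\rho,m\}\qquad\text{for }t\in[0,T],\ \xi\in[0,\xi_0],\ i\in\{\ell,\alpha\}.
\]
The three required inequalities now follow from the definition of $E_1,E_2,E_3$: on $E_1$, $(z^t_\ell)_*\dot\gamma(t+\xi)\le\varphi_\ell(t)+\rho<1-\rho$; on $E_3$, $(z^t_\ell)_*\dot\gamma(t+\xi)\ge\varphi_\ell(t)-\rho>1+\rho$; and on $E_2$, $(z^t_\alpha)_*\dot\gamma(t+\xi)\ge\varphi_\alpha(t)-m\ge m$. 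The index $\alpha$ has weight $s$ because $f_\alpha\in\distr^s\setminus\distr^{s-1}$ by the choice in the paragraph preceding the lemma, and $\alpha\neq\ell$ because $f_\alpha$ and $\drift$ are taken as distinct elements of the adapted basis.

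The only delicate point is the uniform lower bound for $\varphi_\alpha$ on $E_2$; everything else is routine uniform continuity. The reason that delicate step works is precisely the combination of the sign constraint $\varphi_\alpha\ge 0$ with the nondegeneracy assumption, which together rule out $\varphi_\alpha\to 0$ accompanied by $\varphi_\ell\to 1$.
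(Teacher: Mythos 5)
Your proof is correct and follows essentially the same strategy as the paper's: use the nondegeneracy hypothesis together with compactness of $[0,T]$ to obtain a uniform positive lower bound for $\varphi_\alpha$ on $E_2$, then absorb the $\xi$-perturbation via uniform continuity of $(t,\xi)\mapsto (z^t_i)_*\dot\gamma(t+\xi)$. The only difference is cosmetic: you obtain $\rho$ and $m$ at once through a single sequential-compactness contradiction, whereas the paper first picks $\rho$ so that $\varphi_\alpha>0$ on $E_2$ and then sets $2m=\min_{E_2}\varphi_\alpha$ using closedness of $E_2$.
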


\begin{proof}
	Since $\varphi_\alpha>0$ on ${\varphi_\ell^{-1}(1)}$, by continuity of $\varphi_\ell$ and $\varphi_\alpha$ there exists $\rho>0$ such that $\varphi_\alpha>0$ on ${\varphi_\ell^{-1}([1-2\rho,1+2\rho])}$.
	Since $E_2=\varphi_\ell^{-1}([1-2\rho,1+2\rho])$ is closed, letting $2m=\min_{E_2}\varphi_\alpha>0$
	property \eqref{eq:prope2} follows by the uniform continuity of $(t,\xi)\mapsto (z^t_\alpha)_*\dot\gamma(t+\xi)$ on ${E_2}\times[0,\xi_0]$, for sufficiently small $\xi_0$.
	Finally, the uniform continuity of $(t,\xi)\mapsto (z^t_\ell)_*\dot\gamma(t+\xi)$ over $\overline {E_1}\times[0,\xi_0]$ and $\overline {E_3}\times[0,\xi_0]$ yields \eqref{eq:prope1} and \eqref{eq:prope3}.
\end{proof}

We end this section by observing that when the path is well-behaved with respect to the sub-Riemannian structure, it is possible to construct a very special continuous coordinate family, rectifying both $\gamma$ and $\drift$ at the same time.

\begin{prop}\label{prop:normproperty}
   	Let $\gamma:[0,T]\to M$ be a path and $k\in\nat$ be such that $\dot\gamma(t)\in\distr^k(\gamma(t))\setminus\distr^{k-1}(\gamma(t))$ for any $t\in[0,T]$, there exists a continuous coordinate family  $\{z^t\}_{[0,T]}$ for $\gamma$ adapted such that 
    \begin{enumerate}
    	\item there exists a coordinate $z_\alpha$ of weight $k$ such that $z_*^t\dot \gamma \equiv \partial_{z_\alpha}$;
    	\item for any $\xi,\,t\in[0,T]$ it holds that $z_\alpha^t=z_\alpha^{t-\xi}+\xi$ and $z_i^t=z_i^\xi$ if $i\neq \alpha$.
    \end{enumerate}
    Moreover, if there exists $s\in\nat$ such that $\drift\subset\distr^s\setminus\distr^{s-1}$ and such that $\drift(\gamma(t))\neq\dot\gamma(t)\mod\distr^{s-1}(\gamma(t))$ for any $t\in[0,T]$ whenever $s=k$, such family can be chosen adapted to $\drift$.
\end{prop}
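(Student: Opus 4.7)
The strategy is to realise each $z^t$ as a system of canonical coordinates of the second kind centered at $\gamma(t)$, built from an adapted frame in a neighbourhood of $\gamma([0,T])$ that contains $\dot\gamma$ (and, in the ``moreover'' part, $\drift$) among its vectors, with $\dot\gamma$ placed as the \emph{innermost} flow and $\drift$ as the \emph{outermost} one.

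First I extend $\dot\gamma$ to a smooth vector field $f_\alpha$ defined on a tubular neighbourhood $U$ of $\gamma([0,T])$, with $f_\alpha(\gamma(t))=\dot\gamma(t)$ for every $t\in[0,T]$ and $f_\alpha\in\distr^k$; this is possible because $\distr^k$ is a finitely generated $\xCinfty$-module and $\dot\gamma(t)\in\distr^k(\gamma(t))$. Shrinking $U$ if necessary, equiregularity and the hypothesis $\dot\gamma(t)\notin\distr^{k-1}(\gamma(t))$ ensure $f_\alpha\in\distr^k\setminus\distr^{k-1}$ on $U$, so by construction $\gamma$ is an integral curve of $f_\alpha$. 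I then complete $f_\alpha$ to an adapted frame $\{f_1,\ldots,f_n\}$ on $U$ with $w_\alpha=k$. For the ``moreover'' clause there are two sub-cases: if $s\neq k$, then $\drift$ lies in a weight stratum different from that of $f_\alpha$ and can be freely added to the frame; if $s=k$, the hypothesis $\drift(\gamma(t))\not\equiv\dot\gamma(t)\mod\distr^{s-1}(\gamma(t))$ guarantees that $\drift$ and $f_\alpha$ are linearly independent modulo $\distr^{s-1}$ along $\gamma$, so they may occupy two distinct indices $\ell\neq\alpha$ in the weight-$s$ stratum of the adapted frame.

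For each $t\in[0,T]$ I set
\[
\psi_t(z_1,\ldots,z_n) \;=\; e^{z_\ell\,\drift}\circ e^{z_{j_{n-2}} f_{j_{n-2}}}\circ\cdots\circ e^{z_{j_1} f_{j_1}}\circ e^{z_\alpha f_\alpha}(\gamma(t)),
\]
with $f_\alpha$ innermost, $\drift$ (when present) outermost, and the remaining indices $j_1,\ldots,j_{n-2}$ ordered arbitrarily but uniformly in $t$. By the property of second-kind canonical coordinates recalled in Section~\ref{sec:sr}, $z^t:=\psi_t^{-1}$ is a system of privileged coordinates at $\gamma(t)$ for every $t$; since $\drift$ is the outermost flow, the same result gives $z^t_*\drift\equiv\partial_{z_\ell}$, so the family is adapted to $\drift$. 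Continuity of $t\mapsto z^t$ is immediate from the smoothness of $\gamma$ and of all the flows involved.

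Property (i) follows from $\psi_t(0,\ldots,0,z_\alpha,0,\ldots,0)=e^{z_\alpha f_\alpha}(\gamma(t))=\gamma(t+z_\alpha)$, which gives $z^t_*\dot\gamma\equiv\partial_{z_\alpha}$ at $\gamma(t)$. For property (ii), the placement of $f_\alpha$ as the innermost flow together with $\gamma(t)=e^{\xi f_\alpha}(\gamma(t-\xi))$ yields
\[
\psi_{t-\xi}(z_1,\ldots,z_\alpha+\xi,\ldots,z_n) = e^{z_\ell\drift}\circ\cdots\circ e^{(z_\alpha+\xi) f_\alpha}(\gamma(t-\xi)) = e^{z_\ell\drift}\circ\cdots\circ e^{z_\alpha f_\alpha}(\gamma(t)) = \psi_t(z_1,\ldots,z_n),
\]
and inversion gives the desired shift relation between $z^t$ and $z^{t-\xi}$, while all other coordinates coincide. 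The only genuinely delicate step is producing the adapted frame that simultaneously contains $f_\alpha$ and $\drift$ in the resonant case $s=k$; once the linear-independence hypothesis is used to separate them inside the weight-$s$ stratum, the remainder is a formal consequence of the one-parameter group property of the flow of $f_\alpha$.
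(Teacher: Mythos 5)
Your construction is the same as the paper's: extend $\dot\gamma$ to a smooth vector field $f_\alpha\in\distr^k\setminus\distr^{k-1}$, complete $f_\alpha$ (and $\drift$) to an adapted frame, and take second-kind canonical coordinates with $f_\alpha$ as the innermost flow and $\drift$ as the outermost one. The paper's own proof is essentially just the display defining $\psi_t$, so your explicit verification of (i), (ii) and of the privileged property is welcome added detail.

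However, there is a real gap in your treatment of the sub-case $s=k$. You assert that the hypothesis $\drift(\gamma(t))\neq\dot\gamma(t)\bmod\distr^{s-1}(\gamma(t))$ ``guarantees that $\drift$ and $f_\alpha$ are linearly independent modulo $\distr^{s-1}$ along $\gamma$''. That implication is false: if, say, $\drift(\gamma(t))=2\,\dot\gamma(t)$ for all $t$, the hypothesis is satisfied (the difference $\drift-\dot\gamma=\dot\gamma\notin\distr^{s-1}$), yet $\drift(\gamma(t))$ and $f_\alpha(\gamma(t))=\dot\gamma(t)$ are collinear, so they cannot occupy two distinct slots $\alpha\neq\ell$ of an adapted frame. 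In that situation one cannot simultaneously have $(z^t)_*\dot\gamma\equiv\partial_{z_\alpha}$ and $(z^t)_*\drift\equiv\partial_{z_\ell}$, since at $\gamma(t)$ these would force $\partial_{z_\ell}=2\,\partial_{z_\alpha}$. What the construction actually needs is the stronger condition $\drift(\gamma(t))\notin T_{\gamma(t)}\Gamma\oplus\distr^{s-1}(\gamma(t))$, i.e.\ genuine linear independence modulo $\distr^{s-1}$; this is precisely the condition under which the paper invokes Proposition~\ref{prop:normproperty} later (Propositions~\ref{prop:cusps} and~\ref{prop:curvelowerbound}), so you should either use this hypothesis or note explicitly that the weaker one does not suffice. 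A minor further remark: your flow computation yields $z_\alpha^t=z_\alpha^{t-\xi}-\xi$; you write ``the desired shift relation'' without flagging the sign, which differs from the one appearing in the statement. It is worth being explicit, since the sign is used quantitatively elsewhere (e.g.\ in the proof of Proposition~\ref{prop:cusps}).
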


\begin{proof}
	By the assumptions on $\dot\gamma$, it is possible to choose $f_{\alpha}\subset\distr^k\setminus\distr^{k-1}$ such that $\dot\gamma(t)=f_{\alpha}(\gamma(t))$.
	Let then $\{f_1,\ldots,f_n\}$ be the adapted basis obtained by completing $f_\alpha$ and $\drift$.
	Finally, to complete the proof it is enough to consider the family of coordinates given by the inverse of the diffeomorphisms
	\[
		(z_1,\ldots,z_n)\mapsto e^{z_\ell \drift}\circ\cdots\circ e^{z_{\alpha} f_{\alpha}}(\gamma(t)).	\qedhere
	\]
\end{proof}

\section{Cost functions}
\label{sec:cost}

In this section we focus on properties of the cost functions defined in \eqref{int:costfunctions} and of the associated value functions, respectively denoted by $\val^\costuno(\cdot,\cdot)$ and $\val^\costdue(\cdot,\cdot)$.
For $\costuno$ such function is defined by
\begin{equation}\label{def:val}
	\val^\costuno(q,q')=\inf\left\{ \costuno(u,T) |\: T>0,\, q_u(0)=q,\, q_u(T)=q'   \right\}.
\end{equation}
The definition of $\val^\costdue$ is analogous.

\subsection{Regularity of the value function}

The following result, in the case of $\costuno$ is contained in \cite[Proposition 4.1]{prandi2013}, The proof can easily be extended to $\costdue$. 

\begin{thm}\label{thm:cont}
        For any $\maxtime>0$, the functions $\val^\costuno$ and $\val^\costdue$ are continuous from $M\times M\to [0,+\infty)$ (in particular they are finite). 
        Moreover, for any $q,q'\in M$ it holds
	\begin{gather*}
		\val^\costuno(q,q')\le \min_{0\le t\le\maxtime} \dsrs(e^{tf_0}q,q'),\\
		\val^\costdue(q,q')\le \min_{0\le t\le\maxtime} \big(t+ \dsrs(e^{tf_0}q,q') \big).
	\end{gather*}
	Here $e^{t\drift}$ denotes the flow of $\drift$ at time $t$ and $\dsrs$ denotes the Carnot-Carath\'eodory distance w.r.t.\ the system \eqref{cs:srs}, obtained from \eqref{cs:d} by putting $\drift=0$.
\end{thm}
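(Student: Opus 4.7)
The plan is to establish the two displayed upper bounds by a time-rescaling argument exploiting the small-time local controllability of \eqref{cs:d}, and then to derive the continuity (hence finiteness) of $\val^\costuno$ and $\val^\costdue$ by combining these bounds with a classical direct-method lower-semicontinuity argument. First I would fix $(q,q')\in M\times M$ and $t\in[0,\maxtime)$. Using the zero control on $[0,t]$, the trajectory of \eqref{cs:d} starting at $q$ flows along $\drift$ and reaches $e^{t\drift}q$, contributing $\costuno=0$ and $\costdue=t$. Now fix $\eta>0$ and pick a sub-Riemannian control $u_{SR}:[0,\ell]\to\real^m$ for \eqref{cs:srs} steering $e^{t\drift}q$ to $q'$ with $\|u_{SR}\|_{\xLone}\le\dsrs(e^{t\drift}q,q')+\eta$. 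Time-rescaling to any interval of length $\tau\in(0,\maxtime-t]$ via $u_\tau(s):=(\ell/\tau)\,u_{SR}(s\ell/\tau)$ preserves the $\xLone$-norm; applying $u_\tau$ to \eqref{cs:d} started at $e^{t\drift}q$ produces an endpoint $q_\tau$ with $q_\tau\to q'$ as $\tau\downarrow 0$, because over the short interval $[0,\tau]$ the drift contributes only $O(\tau)$ to the displacement while the controlled motion remains macroscopic. The next step is to append a correction reaching $q'$ exactly: by the small-time local controllability granted by the strong H\"ormander hypothesis~\ref{hyp:2}, for small $\tau$ there exists an admissible control of \eqref{cs:d} steering $q_\tau$ to $q'$ of time and $\costuno$-cost both $o(1)$. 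Concatenating the three pieces (drift flow, rescaled SR control, correction) gives an admissible trajectory of total time $\le\maxtime$ and total $\costuno$-cost $\le\dsrs(e^{t\drift}q,q')+\eta+o(1)$; taking $\tau\downarrow 0$ and then $\eta\downarrow 0$ yields the first bound for each $t<\maxtime$, and by continuity in $t$ it extends to $t=\maxtime$, the infimum becoming a minimum by compactness. For $\costdue$ the same construction yields, in the limit, the extra summand $t$ from the drift segment plus $\|(1,u_\tau)\|_{\xLone([0,\tau])}\le\tau+\|u_\tau\|_{\xLone}$ from the middle piece, giving $\val^\costdue(q,q')\le t+\dsrs(e^{t\drift}q,q')$.

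The hard step is making the ``$o(1)$ extra cost'' claim for the correction rigorous and uniform enough to close the argument. This is precisely the content of \cite[Proposition 4.1]{prandi2013}: one shows that the reachable set of \eqref{cs:d} from a base point $p$ in time $\tau'$ contains a sub-Riemannian ball $\bsr(p,c\tau')$ for some uniform constant $c>0$ when $\tau'$ is small, so that $\dsrs(q_\tau,q')=O(\tau)$, which follows from the Ball-Box Theorem~\ref{thm:srballbox} together with the $O(\tau)$ drift perturbation, supplies the required cost bound for the correction piece. The transfer to $\costdue$ is essentially automatic because the correction piece has time $o(1)$ and therefore $\costdue$-cost $o(1)$ as well, with the only new contribution being the additive $\tau$ from the middle piece noted above.

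Finally, continuity of $\val^\costuno$ and $\val^\costdue$ on $M\times M$ splits into the two semicontinuity statements. Upper semicontinuity is immediate from the upper bounds, since $(q,q')\mapsto\min_{t\in[0,\maxtime]}\dsrs(e^{t\drift}q,q')$ (respectively plus $t$) is continuous, thanks to continuity of the flow of $\drift$ and of the Carnot-Carath\'eodory distance, and dominates the value functions from above. Lower semicontinuity is the classical direct-method argument: for $(q_n,q'_n)\to(q,q')$ with bounded values, take a minimizing sequence of controls $u_n$ on $[0,T_n]$ with $T_n\in(0,\maxtime]$; up to a subsequence $T_n\to T$, the $u_n$ are bounded in $\xLone([0,\maxtime],\real^m)$ (extended by $0$), the trajectories form an equicontinuous family by Gronwall, and a compactness argument of Filippov type (using the convexity of the integrands $|\cdot|$ and $\sqrt{1+|\cdot|^2}$) yields a limiting admissible trajectory of \eqref{cs:d} joining $q$ to $q'$ of cost $\le\liminf$. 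Combining the two semicontinuities gives continuity and, in particular, finiteness.
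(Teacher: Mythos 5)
Your upper-bound construction (drift flow, then a time-rescaled sub-Riemannian control, then a small correction) is sound and is essentially an unpacking of the route the paper takes, which is to invoke Proposition~\ref{prop:costole} (adapted from \cite[Lemma~3.6]{prandi2013}): that proposition packages the ``rescaled SR control plus $o(1)$ correction'' into a single statement, namely that from $q_0$ one can reach any $q_1$ in a fixed small time $\eta$ with $\costuno$-cost at most $\dsrs(q_0,q_1)$. Two small inaccuracies on the way: the displacement $\dsrs(q_\tau,q')$ caused by the drift perturbation is $O(\tau^{1/r})$ (with $r$ the step), not $O(\tau)$ -- the Ball-Box estimate converts an $O(\tau)$ Euclidean error into an $O(\tau^{1/r})$ sub-Riemannian one -- though this is still $o(1)$ and does not affect the conclusion; and the parenthetical ``the reachable set in time $\tau'$ contains $\bsr(p,c\tau')$'' is not what the relevant lemma says (the radius of the ball scales with the \emph{cost}, not the time).

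The genuine gap is in the continuity part. You write that upper semicontinuity is ``immediate from the upper bounds, since $(q,q')\mapsto\min_t\dsrs(e^{t\drift}q,q')$ is continuous and dominates the value functions.'' A continuous majorant does not yield upper semicontinuity: $f\le g$ with $g$ continuous says nothing about $\limsup f(q_n)\le f(q_0)$. What one actually needs is a concatenation argument: given a control $u$ on $[0,T]$, $T<\maxtime$, realizing $\val(q,q')$ up to $\eta/3$, and nearby data $(p,p')$, one truncates $u$ slightly to $[0,T-2\delta]$ (so its endpoint $q''$ is still close to $q'$), then prepends a short correction steering $p$ to $q$ and appends a short correction steering $q''$ to $p'$, both of vanishing cost and time as $(p,p')\to(q,q')$ and $\delta\downarrow 0$, keeping the total time below $\maxtime$. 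This is a genuine argument using Proposition~\ref{prop:costole} again, not a formal consequence of the upper bound. On the lower-semicontinuity side, your Filippov-type compactness sketch is in the right direction, but note that the controls are only $\xLone$-bounded, so the sequence $u_n$ may concentrate (develop Dirac-like behaviour); the usual remedy is to reparametrize time so that controls are pointwise bounded, which you should at least flag. The paper itself sidesteps all this by citing \cite[Proposition~4.1]{prandi2013} for the continuity, so it never supplies these details, but as a self-contained proof yours needs them.
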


We remark that this fact follows from the following proposition (obtained adapting \cite[Lemma 3.6]{prandi2013} to control-affine systems).

\begin{prop}\label{prop:costole}
    For any $\eta>0$ sufficiently small and for any $q_0,q_1\in M$, it holds 
 	\[
 		\inf\{ \costuno(u,\eta)\mid \text{ if } q_u(0)=q_0 \text{ then } q_u(\eta)=q_1  \}\le \dsrs(q_0,q_1).
 	\]
\end{prop}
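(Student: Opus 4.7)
The plan is to rescale a near-minimizer of the sub-Riemannian problem to a vanishingly short sub-interval of $[0,\eta]$: on such an interval the drift contribution vanishes and the $\xLone$-cost of the rescaled sub-Riemannian controls is preserved by time-rescaling invariance of \eqref{cs:srs}. A small correction, controlled by the strong H\"ormander condition, will then close the remaining gap so that the final endpoint is exactly $q_1$.

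Fix $\epsilon>0$ and, by definition of $\dsrs$, pick a sub-Riemannian admissible control $\bar u\in \xLone([0,T_0];\real^m)$ steering \eqref{cs:srs} from $q_0$ to $q_1$ with $\|\bar u\|_{\xLone}\le \dsrs(q_0,q_1)+\epsilon/2$. For $\lambda\ge 1$ set $u_\lambda(t)=\lambda\,\bar u(\lambda t)$ on $[0,T_0/\lambda]$; this preserves the $\xLone$-cost and keeps $q_1$ as endpoint of the sub-Riemannian trajectory. Viewing $u_\lambda$ as a control for \eqref{cs:d} and applying the variation formula \eqref{eq:split},
\[
q_{u_\lambda}(T_0/\lambda)=e^{(T_0/\lambda)\drift}\circ\crex_0^{T_0/\lambda}\sum_j u_\lambda^j(t)(e^{-t\drift})_*f_j\,dt\,q_0.
\]
After the change of variable $s=\lambda t$, the time-ordered exponential becomes $\crex_0^{T_0}\sum_j\bar u^j(s)(e^{-(s/\lambda)\drift})_*f_j\,ds$, whose generating vector fields converge uniformly on compact sets to $f_j$ as $\lambda\to\infty$. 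By continuous dependence on parameters, the corresponding flow applied to $q_0$ tends to $q_1$, and combined with $e^{(T_0/\lambda)\drift}\to\mathrm{id}$ this gives $q_{u_\lambda}(T_0/\lambda)\to q_1$; in particular, $r_\lambda:=\dsrs(q_{u_\lambda}(T_0/\lambda),q_1)\to 0$.

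To close the gap I invoke small-time local controllability of \eqref{cs:d}: by the strong H\"ormander assumption \ref{hyp:2} and the Ball-Box theorem (Theorem~\ref{thm:srballbox}) applied in a continuous family of privileged coordinates at $q_1$ (Remark~\ref{rmk:bbuniform}), there exists a correction control $v_\lambda\in\xLone([0,\sigma_\lambda];\real^m)$ with $\sigma_\lambda\to 0$ and $\|v_\lambda\|_{\xLone}\le Cr_\lambda$ whose \eqref{cs:d}-trajectory carries $q_{u_\lambda}(T_0/\lambda)$ exactly to $q_1$. Concatenating $u_\lambda$ with $v_\lambda$ produces an admissible control on $[0,T_0/\lambda+\sigma_\lambda]\subset[0,\eta]$ (for $\lambda$ large) steering $q_0$ to $q_1$ with $\xLone$-cost at most $\dsrs(q_0,q_1)+\epsilon/2+Cr_\lambda\le\dsrs(q_0,q_1)+\epsilon$. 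Letting $\epsilon\downarrow0$ yields the claimed bound.

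The hard part is the uniformity of the constants $C$ and $\sigma_\lambda$ in the correction step along the sequence $q_{u_\lambda}(T_0/\lambda)\to q_1$: since these endpoints remain in a fixed compact neighborhood of $q_1$, the continuous coordinate framework developed in Section~\ref{sec:normal_forms} together with the uniform Ball-Box estimates of Remark~\ref{rmk:bbuniform} supply $C$ and $\sigma_\lambda$ independently of $\lambda$. Coupled with $r_\lambda\to 0$, this uniformity drives the correction cost to zero and delivers the announced inequality.
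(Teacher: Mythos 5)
Your scheme — rescale a sub-Riemannian near-minimizer onto a short interval, invoke the variation formula, and correct the endpoint — is natural, but as written it does not prove the statement. The constraint is $q_u(\eta)=q_1$ at time \emph{exactly} $\eta$, whereas your concatenated control lives on $[0,T_0/\lambda+\sigma_\lambda]$, and this arrival time tends to $0$, not to $\eta$, as $\lambda\to\infty$. This is not a cosmetic discrepancy: if you pad the control by $u\equiv 0$ on the leftover interval $[T_0/\lambda+\sigma_\lambda,\eta]$, the drift carries the trajectory to $e^{(\eta-T_0/\lambda-\sigma_\lambda)\drift}q_1\neq q_1$, and a further correction at time $\eta$ costs on the order of $\dsrs\big(e^{(\eta-T_0/\lambda-\sigma_\lambda)\drift}q_1,\,q_1\big)$, which converges as $\lambda\to\infty$ to $\dsrs(e^{\eta\drift}q_1,q_1)>0$ rather than to $0$. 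Thus sending $\lambda\to\infty$ (which you need to force $r_\lambda\to0$) is incompatible with arriving at $q_1$ at the prescribed time $\eta$; the two requirements pull against each other, so the gap cannot be closed by merely ``choosing $\lambda$ appropriately''. A cleaner route to the fixed-time statement is to apply the variation formula \eqref{eq:split} once on $[0,\eta]$, reducing the task to steering the time-dependent system \eqref{cs:td} from $q_0$ to $e^{-\eta\drift}q_1$ over $[0,\eta]$, and then to estimate that $\xLone$-cost directly.

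As a secondary point, the correction step requires a quantitative small-time controllability estimate for the control-affine system \eqref{cs:d}, not for the sub-Riemannian one: the Ball-Box Theorem~\ref{thm:srballbox} you cite controls $\bsr(\cdot,\eps)$, whereas what you need is a lower inclusion for the reachable sets of \eqref{cs:d}, which is the content of Theorem~\ref{thm:bbox} in privileged coordinates adapted to $\drift$.
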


We denote the reachable set from the point $q\in M$ with cost $\costuno$ less than $\eps>0$ as
\begin{equation}\label{def:reach}
	\bdt q \eps = \left\{ p\in M\mid \val^\costuno(q,p)\le \eps \right\}.
\end{equation}
Recall de definition of $\bbox \eta$ in \eqref{def:box} and that $\{\partial_{z_i}\}_{i=1}^n$ is the canonical basis in $\real^n$.
Then, we define the following sets, for parameters $\eta>0$ and $T>0$:
	\[
	\begin{split}
		\Xi_T(\eta) &= \bigcup_{0\le \xi\le T} \bigg(  \xi \partial_{z_\ell} + \bbox \eta  \bigg), \\ 	
		\Pi_T (\eta) &= \bbox \eta \cup \bigcup_{0<\xi\le T} \{ z\in\real^n\colon\: 0\le z_\ell-\xi\le \eta^s,\, |z_i|\le \eta^{w_i} + \eta \xi^{\frac {w_i} s} \text{ for } w_i\le s, i\neq \ell, \\
		&\qquad \qquad \qquad \qquad \qquad \qquad \qquad \qquad \qquad  \text{ and }|z_i|\le \eta (\eta + \xi^{\frac 1 s})^{w_i-1} \text{ for } w_i> s\}.\\
	\end{split}
	\]
In \cite{prandi2013} is proved a more general version of the following result, in the same spirit of Theorem~\ref{thm:srballbox}.

\begin{thm}\label{thm:bbox}
	Assume that there exists $s\in\nat$ such that $\drift\subset\distr^s\setminus\distr^{s-1}$.
	Assume, moreover, that $z=(z_1,\ldots,z_n)$ is a privileged coordinate system adapted to $\drift$, i.e., such that $z_* \drift=\partial_{z_\ell}$.
	Then, there exist $C,\eps_0,T_0>0$ such that 
	\begin{equation}\label{eq:bboxdrift}
		 \Xi_T\left( {\frac{1}{C} \eps}\right)\subset  \bdt q \eps \subset \Pi_T(C\eps),\qquad \text{for } \eps<\eps_0\text{ and } T<T_0.
	\end{equation}
	Here, with abuse of notation, we denoted by $\bdt q \eps$ the coordinate representation of the reachable set. 
	In particular,  
	\begin{equation}
		\label{eq:bboxdriftnegative}
			\bbox {\frac{1}{C} \eps}\cap \{z_\ell\le 0\}\subset \bdt q \eps \cap \{z_\ell\le 0\} \subset \bbox {C\eps}\cap \{z_\ell\le 0\}.
	\end{equation}
\end{thm}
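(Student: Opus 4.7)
The plan is to reduce the statement to a uniform ball-box type estimate for the driftless time-dependent system \eqref{cs:td} via the variation formula \eqref{eq:split}. Recall that a control $u$ steers \eqref{cs:d} from $q$ to $p$ in time $T'$ with $\costuno$-cost $c$ if and only if the same $u$ steers \eqref{cs:td} from $q$ to $e^{-T'\drift}p$ with the same cost. In the privileged coordinates $z$ adapted to $\drift$, the drift flow is simply the translation $z\mapsto z+t\,\partial_{z_\ell}$. The time-dependent fields $(e^{-t\drift})_*f_j$ are smooth in $t$, coincide with $f_j$ at $t=0$, and, since pushforwards commute with Lie brackets, span the same flag as $\vett$ at each point for $t$ in a small interval $[0,T_0]$. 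Hence $z$ remains privileged for the time-dependent structure uniformly in $t\in[0,T_0]$, and Theorem~\ref{thm:srballbox} together with Remark~\ref{rmk:bbuniform} applies with constants that do not depend on $t$.

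For the inner inclusion $\Xi_T(\eps/C)\subset\bdt{q}{\eps}$, take a target point $p$ with $z(p)=\xi\partial_{z_\ell}+b$, where $\xi\in[0,T]$ and $b\in\bbox{\eps/C}$. By the translation property above, $z(e^{-\xi\drift}p)=b\in\bbox{\eps/C}$, which by the uniform lower ball-box lies inside the sub-Riemannian ball of radius $\eps$ around $q$ for the time-dependent system. A Proposition~\ref{prop:costole}-type argument applied to \eqref{cs:td} (the proof in \cite{prandi2013} adapts directly since it only uses the equiregular sub-Riemannian structure, which is preserved here uniformly in $t$) yields a control $u$ on $[0,\xi]$ with $\|u\|_{\xLone}\le\eps$ realising the transfer. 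Transporting $u$ back via \eqref{eq:split} gives a trajectory of \eqref{cs:d} from $q$ to $p$ of the same cost, proving $p\in\bdt{q}{\eps}$.

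For the outer inclusion $\bdt{q}{\eps}\subset\Pi_T(C\eps)$, pick $p\in\bdt{q}{\eps}$ reached by some $u$ on $[0,T']$ with $T'\le T$ and $\costuno(u,T')\le\eps$, and set $\xi=T'$. By \eqref{eq:split} the same $u$ steers \eqref{cs:td} from $q$ to $p':=e^{-\xi\drift}p$ with cost $\le\eps$, so the uniform upper ball-box for \eqref{cs:td} bounds the coordinates of $p'$. The key quantitative step is controlling how the brackets of the perturbed fields $(e^{-t\drift})_*f_j$ differ from those of $f_j$: since $\drift\in\distr^s\setminus\distr^{s-1}$, a bracket of length $k$ of these fields equals the corresponding bracket of $\vett$ modulo error terms whose weighted order involves $\drift$, i.e. of relative size $\mathcal{O}(t\cdot\xi^{1/s})^{\cdot}$. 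This produces, in the upper bound, the additional anisotropic terms $\eta\,\xi^{w_i/s}$ for $w_i\le s$ and $\eta(\eta+\xi^{1/s})^{w_i-1}$ for $w_i>s$ that appear in the definition of $\Pi_T$. Translating back by $+\xi$ in the $z_\ell$ direction recovers the coordinates of $p$ and gives $p\in\Pi_T(C\eps)$.

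Finally, \eqref{eq:bboxdriftnegative} follows immediately from \eqref{eq:bboxdrift}: in $\Xi_T(\eta)$, if $\xi\ge0$ and $\xi+b_\ell\le0$ with $|b_\ell|\le\eta^s$ then $\xi\le\eta^s$, forcing $\xi\partial_{z_\ell}+b\in\bbox\eta$; and the $\xi$-dependent branch of $\Pi_T(\eta)$ requires $z_\ell\ge\xi>0$, so its intersection with $\{z_\ell\le0\}$ is empty. The main anticipated obstacle is the quantitative estimate in the outer inclusion, namely making precise how the accumulation of brackets involving the drift along the time-dependent sub-Riemannian system dilates the ball anisotropically in $\xi$; this is exactly what the shape of $\Pi_T(\eta)$ encodes, and matching the constants requires a careful iterative bound on the pushforwards $(e^{-t\drift})_*f_j$ together with their brackets in the $z$-adapted frame.
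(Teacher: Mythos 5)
The paper does not actually prove Theorem~\ref{thm:bbox}: it is stated as a particular case of a more general result and the proof is deferred to \cite{prandi2013} (``In \cite{prandi2013} is proved a more general version of the following result''). So there is no in-paper proof to compare against, and what you are offering is a sketch of a self-contained argument.

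Your overall strategy -- pass to the time-dependent system \eqref{cs:td} via the variation formula \eqref{eq:split}, use that the drift flow is a pure translation in the $z_\ell$ coordinate, and then apply uniform sub-Riemannian ball-box estimates -- is the natural one and is consistent with the spirit of \cite{prandi2013}. The derivation of \eqref{eq:bboxdriftnegative} from \eqref{eq:bboxdrift} is essentially correct: the only pieces of $\Xi_T(\eta)$ and $\Pi_T(\eta)$ that can meet $\{z_\ell\le 0\}$ are the $\xi=0$ boxes.

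The genuine gap is in the outer inclusion. You write that ``the uniform upper ball-box for \eqref{cs:td} bounds the coordinates of $p'$'' and then wave at ``controlling how the brackets of the perturbed fields $(e^{-t\drift})_*f_j$ differ from those of $f_j$,'' producing terms ``of relative size $\mathcal O(t\cdot\xi^{1/s})$''. But there is no fixed sub-Riemannian structure for which Theorem~\ref{thm:srballbox} applies: the fields $(e^{-t\drift})_*f_j$ vary with $t$, and the endpoint $p'=e^{-\xi\drift}p$ is \emph{not} in a box of size $C\eps$ about $q$. Indeed the definition of $\Pi_T$ has transversal coordinates of weight $w_i\le s$ allowed to be as large as $\eta^{w_i}+\eta\,\xi^{w_i/s}$, which for $w_i>1$ is strictly larger than the $\eta^{w_i}$ that a ball-box of size $\eta$ would give -- the set really is fattened anisotropically as a function of $\xi$. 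Establishing this fattening is precisely the content of the theorem, and it requires a careful quantitative estimate on the weighted order of $(e^{-t\drift})_*f_j-f_j$ and of their iterated brackets in the adapted coordinates $z$, followed by a Gronwall-type integration along the trajectory. As written, that estimate is asserted, not proved, and your own closing sentence acknowledges this (``matching the constants requires a careful iterative bound''). Until that lemma is supplied, the outer inclusion -- and hence the theorem -- is not established. The inner inclusion sketch is more persuasive, but it likewise relies implicitly on a uniform-in-$t$ version of Proposition~\ref{prop:costole} for the time-dependent system, which would need to be stated and proved (again with attention to how the constants depend on the pushforward by $e^{-t\drift}$), rather than only asserted to ``adapt directly.''
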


\begin{rmk}\label{rmk:bbdriftuniform}
	Let $N\subset M$ be compact and let $\{z^q\}_{q\in N}$ be a family of systems of privileged coordinates at $q$ depending continuously on $q$.
	Then, as for Theorem~\ref{thm:srballbox} (see Remark~\ref{rmk:bbuniform}), there exist uniform constants $C,\eps_0,T_0>0$ such that Theorem~\ref{thm:bbox} holds for any $q\in N$ in the system $z^q$.
\end{rmk}

We notice also that, since \cite[Example 21]{prandi2013} is easily extendable to $\costdue$, it follows that, for neither $\costuno$ nor $\costdue$, the existence of minimizers is assured. 
Recall that a control $u\in\contset$ is a minimizer between $q_1,q_2\in M$ for the cost $\cost$ if its associated trajectory with initial condition $q_u(0)=q_1$ is such that $q_u(T)=q_2$ and $\val^\cost(q_1,q_2)=\cost(u,T)$. 

\subsection{Behavior along the drift}\label{subsec:drift}

The following proposition assures that a minimizer for $\costuno$ and $\costdue$ always exists when moving in the drift direction.

\begin{prop}\label{prop:opt}
	Assume that there exists $s\in \nat$ such that $\drift\subset\distr^s\setminus\distr^{s-1}$.
    For any $0<t<\maxtime$, the unique minimizer between any $q_0\in M$ and $e^{t\drift}q_0$ for the cost $\costuno$ is the null control on $[0,t]$.
	Moreover, if $\drift\notin\distr(q_0)$, i.e. $s\ge 2$, and the maximal time of definition of the controls $\maxtime$ is sufficiently small, the same is true for $\costdue$.
\end{prop}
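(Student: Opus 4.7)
For $\costuno$ the proof is essentially immediate. The inequality $\costuno(u,T)\ge 0$ is an equality iff $u\equiv 0$ a.e., and the null control on $[0,t]$ has zero cost and steers $q_0$ to $e^{t\drift}q_0$, so it is a minimizer. Any other minimizer also has zero cost, is therefore null a.e., and has trajectory $\tau\mapsto e^{\tau\drift}q_0$. Since hypothesis~\ref{hyp:h0} forces $\drift(q_0)\neq 0$, this integral curve is locally injective; provided $\maxtime$ is smaller than its first return time to $q_0$, the identity $e^{T\drift}q_0=e^{t\drift}q_0$ forces $T=t$, and uniqueness follows.

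For $\costdue$ the upper bound $\val^{\costdue}(q_0,e^{t\drift}q_0)\le t$ is realized by the null control on $[0,t]$. Given any $u\in\lcont$ steering $q_0$ to $e^{t\drift}q_0$, I would split into two cases. If $T\ge t$, then $\costdue(u,T)\ge T\ge t$, with equality only when $T=t$ and $u\equiv 0$ a.e. The substantial case is $T<t$, for which I would combine Minkowski's integral inequality for $p=2$,
\[
    \costdue(u,T)=\int_0^T\|(1,u(\tau))\|\,d\tau\ge\sqrt{T^2+\costuno(u,T)^2},
\]
with a lower bound on $\costuno(u,T)$ extracted from the Ball-Box Theorem~\ref{thm:bbox}.

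Concretely, choose a privileged coordinate system adapted to $\drift$ at $q_0$ (Definition~\ref{def:normdrift}); in these coordinates the target $e^{t\drift}q_0$ is $t\partial_{z_\ell}$ with $w_\ell=s$, all other coordinates zero. Since $e^{t\drift}q_0$ is reached in time $T$ with $\costuno$-cost $L:=\costuno(u,T)$, applying Theorem~\ref{thm:bbox} with time horizon $T$ places this point in $\Pi_T(CL)$. Inspecting the definition of $\Pi_T$ and using that every non-$\ell$ coordinate of the target vanishes, at least one of $L\ge t^{1/s}/C$ or $L\ge(t-T)^{1/s}/C$ must hold; in either event $\costuno(u,T)\ge(t-T)^{1/s}/C$. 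Substituting,
\[
    \costdue(u,T)\ge\sqrt{T^2+(t-T)^{2/s}/C^2},
\]
and the task reduces to checking that this is strictly greater than $t$ for every $T\in[0,t)$. That is equivalent to $(t-T)^{1-2/s}(t+T)<1/C^2$, and since $s\ge 2$ makes the exponent $1-2/s$ nonnegative, an elementary argument bounds the left-hand side by a quantity vanishing as $\maxtime\downarrow 0$, so the inequality holds for $\maxtime$ small enough.

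The main obstacle is the Ball-Box bookkeeping: one must apply Theorem~\ref{thm:bbox} with $T$ the actual time horizon of the control (not only $\maxtime$) and read off the correct exponent $1/s$ from the drift-adapted shape of $\Pi_T$. The assumption $s\ge 2$ enters decisively by making $1-2/s$ nonnegative, which is exactly what allows the resulting elementary inequality to close for $\maxtime$ small; the $\costuno$ argument, in contrast, uses only the nonvanishing of $\drift$ at $q_0$ and hence needs nothing beyond hypothesis~\ref{hyp:h0}.
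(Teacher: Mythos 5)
Your proof is correct and, for the $\costdue$ part, takes a genuinely different route from the paper's. The paper argues by contradiction: negating ``for $\maxtime$ small enough'', it picks a sequence $\maxtime_n\downarrow 0$ and nonzero controls $v_n$ on $[0,t_n]$ steering $q_0$ to $e^{\maxtime_n\drift}q_0$ with $\costdue(v_n,t_n)\le\maxtime_n$, then reads off the $z_\ell$-coordinate of the endpoint via Theorem~\ref{thm:bbox} to obtain $\maxtime_n\le t_n+C\|v_n\|^2$ (this is where $s\ge 2$ enters), and finally derives $\|v_n\|\le C\|v_n\|^2$, which contradicts $\|v_n\|\to 0$ as forced by continuity of $\val^\costdue$. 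You instead argue directly: split on $T\ge t$ versus $T<t$, lower-bound the cost by $\sqrt{T^2+\costuno(u,T)^2}$ via the triangle inequality for integrals applied to the $\real^2$-valued function $\tau\mapsto(1,|u(\tau)|)$, plug in the Ball-Box bound $\costuno(u,T)\ge(t-T)^{1/s}/C$ (read off $\Pi_T(C\,\costuno)$ exactly as the paper does), and close the elementary inequality $(t-T)^{1-2/s}(t+T)<C^{-2}$, where $s\ge 2$ now enters as nonnegativity of the exponent. Both proofs rest on the same use of Theorem~\ref{thm:bbox} in drift-adapted coordinates and on $s\ge 2$; yours has the merit of treating arbitrary $t<\maxtime$ uniformly (the paper as written handles only $t=\maxtime_n$), of exhibiting an explicit admissible threshold for $\maxtime$, and of avoiding the compactness/sequence argument entirely. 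Two small remarks: you should state explicitly that applying Theorem~\ref{thm:bbox} needs $\costuno(u,T)<\eps_0$ and $T<T_0$, which is harmless because any serious competitor satisfies $\costdue(u,T)\le t<\maxtime$ and this already caps both $T$ and $\costuno$; and your observation about periodic drift orbits in the $\costuno$ part is right --- the paper dismisses that case as ``trivial'', but uniqueness does tacitly require $\maxtime$ below the first return time, in line with the paper's standing convention of taking $\maxtime$ small.
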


\begin{proof}
	Since, for $t\in[0,\maxtime]$, we have that $\val^\costuno(q,e^{t\drift}q)=0$, the first statement is trivial.

	To prove the second part of the statement we proceed by contradiction.
	Namely, we assume that there exists a sequence $\maxtime_n\longrightarrow 0$ such that for any $n\in\nat$ there exists a control $v_n\in\xLone([0,t_n],\real^m)\subset\contsett {\maxtime_n}$, $v_n\not\equiv 0$ , steering the system from $q_0$ to $e^{\maxtime_n\drift}(q_0)$ and such that 
	\begin{equation}\label{eq:costoass}
		t_n+\|v_n\|_{\xLone([0,t_n],\real^m)}=\costdue(v_n,t_n) \le \costdue(0,\maxtime_n)=\maxtime_n.
	\end{equation}
	Let $z=(z_1,\ldots,z_n)$ be a privileged coordinate system adapted to $\drift$ at $q$, as per Definition~\ref{def:normdrift}.
	Thus, by Theorem~\ref{thm:bbox}, it holds
	\begin{equation}\label{eq:costoass2}
		\begin{split}
		    |z_\ell(e^{\maxtime_n\drift}(q_0))| \le t_n + C \|v_n\|_{\xLone([0,t_n],\real^m)}^2.
		\end{split}
	\end{equation}
	Since $z_\ell(e^{\maxtime_n\drift}(q_0))=\maxtime_n$, putting together \eqref{eq:costoass} and \eqref{eq:costoass2}  yields $\|v_n\|_{\xLone([0,t_n],\real^m)}\le C\|v_n\|_{\xLone([0,t_n],\real^m)}^2$ for any $n\in\nat$.
	Since by the continuity of $\val^\costdue$ we have that $\|v_n\|_{\xLone([0,t_n],\real^m)}\rightarrow 0$, this is a contradiction.
\end{proof}

We remark that, in the case of $\costdue$, the assumption $\drift\notin\distr(q_0)$ of Proposition~\ref{prop:opt} is essential.
In particular, in the following example we show that when $\drift\subset\distr$ even if a minimizer between $q_0$ and $e^{t\drift}(q_0)$ exists, it could not coincide with an integral curve of the drift.

\begin{example}
    Consider the control-affine system on $\real^2$,
    \begin{equation}
    	\label{eq:riemann}
    	\frac d {dt} x = \drift(x)+u_1 \drift(x)+u_2 f(x),
    \end{equation}
    where $\drift=(1,0)$ and $f=(\phi_1,\phi_2)$ for some $\phi_1,\phi_2:\real^2\to \real$, with $\phi_2\neq 0$ and $\partial_x(\phi_1/\phi_2)|_{(0,0)}\neq0$.
    Since $\drift$ and $f$ are always linearly independent, the underlying small sub-Riemannian system is indeed Riemannian with metric
    \[
    	g=
    	\left(
    	\begin{array}{c c}
    		1 & -{\phi_1}/{\phi_2} \\
    		-{\phi_1}/{\phi_2} & \frac{1-\phi_1^2}{\phi_2^{2}}
    	\end{array}
    	\right).
    \]

    Let us now prove that the curve $\gamma:[0,1]\to \real^2$, $\gamma(t)=(t\,T,0)$ is not a minimizer of the Riemannian distance between $(0,0)$ and $(T,0)$. 
    In particular, it is enough to prove that $\gamma$ is not a geodesic for small $T>0$. 
    For $\gamma$ the geodesic equation writes
    \[
    	\begin{cases}
    	    t^2 \Gamma_{11}^1(\gamma(t)) =0, \\
    	    t^2 \Gamma_{11}^2(\gamma(t)) =0,
    	\end{cases}
    	\quad
    	\text{for any }t\in[0,1]
    	\qquad
    	\Longleftrightarrow
    	\qquad
    	\Gamma_{11}^1(\cdot,0) =\Gamma_{11}^2(\cdot,0)=0 \text{ near }0.
    \]
    Here, $\Gamma_{k\ell}^i$ are the Christoffel numbers of the second kind associated with $g$.
    A simple computation shows that
    \[
    	\Gamma_{11}^1 = \frac {\phi_1} {\phi_2 }\, \partial_{x_1}\left(\frac {\phi_1} {\phi_2}\right),\quad 
    	\Gamma_{11}^2 =  \partial_{x_1}\left(\frac {\phi_1} {\phi_2}\right).
    \]
    Thus, if $\partial_{x_1} (\phi_1/\phi_2)|_{(0,0)}\neq 0$, then $\Gamma_{11}^2(0,0)\neq0$, showing that $\gamma$ is not a geodesic.
    
    We now show that this fact implies that for any minimizing sequence $u_n=(u_n^{1},u_n^2)\in\xLone([0,t_n],\real^2$  for $\val^\costdue$ between $(0,0)$ and $e^{T\drift}((0,0))=(T,0)$, such that $J(u_{n+1},t_{n+1})\le J(u_{n},t_{n})$, then $u_n^2\neq 0$ for sufficiently big $n$.
    To this aim, fix any $t_n\rightarrow 0$, let $u_n(s)=u(s/t_n)$ and $q_n(\cdot)$ be the trajectory associated with $u_n$ in system \eqref{eq:riemann}.
    Moreover, let $v=(v_1,0)\in\xLone([0,S],\real^2)$ be the minimizer of $\costdue$ between $(0,0)$ and $(T,0)$ in the system $\dot x_1=1+v_1$.
    Since the trajectory of $v$ is exactly $\gamma$, by rescaling it holds $\length(\gamma)=\costdue(v,S)$.
    Then, by standard results in the theory of ordinary differential equations, it follows that $q_n(t_n)\rightarrow (T,0)$ and the fact that $\gamma$ is not a Riemannian minimizing curve implies that
    \[
    \|u_n\|_{\xLone}=\|u\|_{\xLone}< \length(\gamma)=\costdue(v,S).
    \]
    Hence, for sufficiently big $n$ it holds that $\costdue(u_n,t_n)<\costdue(v,S)$, proving the claim.
\end{example}

As a consequence of Proposition~\ref{prop:opt}, we get the following property for the complexities defined in the previous section with respect to the costs $\costuno$ and $\costdue$. 
It generalizes to the control-affine setting the trivial minimality of the sub-Riemannian complexity on the path $\Gamma=\{q\}$.

\begin{cor}
	Assume that there exists $s\ge 2$ such that $\drift\subset\distr^s\setminus\distr^{s-1}$.
	Let $x\in M$ and $y=e^{T\drift}x$, for some $0<T<\maxtime$. 
	Then, for any $\eps>0$, the minimum over all curves $\Gamma\subset M$ (resp.\ paths $\gamma:[0,T]\to M$) connecting $x$ and $y$ of $\ccostcostuno(\cdot,\eps)$ and $\cappcostuno(\cdot,\eps)$ (resp. $\ctimecostuno(\cdot,\delta)$ and $\cneigcostuno(\cdot,\eps)$) is attained at $\Gamma=\{ e^{t\drift}\}_{t\in[0,T]}$ (resp.\ at $\gamma(t)=e^{t\drift}x$). 
	Moreover, the same is true for the cost $\costdue$, whenever $\maxtime$ is sufficiently small.
\end{cor}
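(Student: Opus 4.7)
Write $\gamma_0(t) := e^{t\drift}x$ for $t\in[0,T]$ and $\Gamma_0 := \gamma_0([0,T])$. The strategy has two steps. First, exhibit the null control $u\equiv 0\in\xLone([0,T],\real^m)$ as an admissible competitor in each of the four infima defining the complexities at $\Gamma_0$ (resp.\ $\gamma_0$), obtaining explicit values. Second, invoke Proposition~\ref{prop:opt} to show that these values are lower bounds over every curve $\Gamma'$ (resp.\ path $\gamma'$) joining $x$ and $y$.

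Since $q_{u\equiv 0}=\gamma_0$, the null control automatically satisfies the admissibility requirement in each infimum: a single interpolation piece for $\ccost$, the inclusion $\Gamma_0\subset\tube(\Gamma_0,\eps)$ for $\capp$, the inclusion $\gamma_0(t)\in\bsr(\gamma_0(t),\eps)$ for $\cneig$, and the $\delta$-time interpolation condition (trivially, as $q_u\equiv\gamma_0$) for $\ctime$. With $\costuno(0,T)=0$ this yields $\ccostcostuno(\Gamma_0,\eps)=\cappcostuno(\Gamma_0,\eps)=\cneigcostuno(\gamma_0,\eps)=0$; for $\ctime$ the constraint $\delta\,\costuno(0,T)\le\eps$ is automatic, so $\ctimecostuno(\gamma_0,\eps)=T/\maxtimectime$. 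Analogously, $\costdue(0,T)=T$ gives $\ccostcostdue(\Gamma_0,\eps)=\cappcostdue(\Gamma_0,\eps)=\cneigcostdue(\gamma_0,\eps)=T/\eps$, and the constraint $\delta\,\costdue(0,T)=\delta T\le\eps$ forces $\delta\le\eps/T$, so $\ctimecostdue(\gamma_0,\eps)=\max\{T^2/\eps,\,T/\maxtimectime\}$.

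For the matching lower bound, let $\Gamma'$ (resp.\ $\gamma':[0,T]\to M$) be any curve (resp.\ path) from $x$ to $y=e^{T\drift}x$. Every control $u$ admissible in any of the four infima steers $x$ to $y$, so by Proposition~\ref{prop:opt} we have $\costuno(u,T')\ge 0$ (trivial) and $\costdue(u,T')\ge T$ (when $\maxtime$ is sufficiently small). Plugging these bounds, together with $\delta<\maxtimectime$ for $\ctime$, into the four complexity definitions produces lower bounds that match exactly the values realised by $\gamma_0$; hence the minimum is attained at $\Gamma_0$ (resp.\ $\gamma_0$).

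The argument has no genuine technical obstacle; it is entirely driven by Proposition~\ref{prop:opt}. The only point worth stressing is that the uniqueness-of-minimizer statement there is formulated for controls on arbitrary intervals $[0,T']\subset[0,\maxtime]$, so the lower bound $\costdue(u,T')\ge T$ is insensitive to the trajectory time $T'$ and can be inserted uniformly into each of the four complexity definitions.
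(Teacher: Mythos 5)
Your argument is essentially the same as the paper's: the corollary is stated there as a direct consequence of Proposition~\ref{prop:opt}, and you correctly deploy that proposition for the lower bound while exhibiting the null control as the competitor realizing it. The structure and conclusion are right.

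One step as written would fail, though. For $\ccost$ with the cost $\costdue$, you claim the null control is admissible via ``a single interpolation piece.'' But an $\eps$-cost interpolation requires each piece to satisfy $J(u|_{[t_{i-1},t_i)},t_i-t_{i-1})\le\eps$, and for $\costdue$ a single piece has cost $\costdue(0,T)=T$, which violates this whenever $\eps<T$. To make the null control admissible you must subdivide $[0,T]$ into $N=\lceil T/\eps\rceil$ equal subintervals so that each piece has cost $T/N\le\eps$ (the interpolation points $q_u(t_i)=e^{t_i\drift}x$ then lie on $\Gamma_0$ automatically). The total cost is still $T$, so the value $\ccostcostdue(\Gamma_0,\eps)=T/\eps$ you report is correct, and combined with the lower bound from Proposition~\ref{prop:opt} the argument goes through; the error is only in the admissibility claim, not in the final formula.
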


\subsection{Behavior transversally to the drift}

When we consider two points on different integral curves of the drift, it turns out that the two costs $\costuno$ and $\costdue$ are indeed equivalent, as proved in the following.

\begin{prop}\label{prop:eq}
	Assume that there exists $s\in\nat$ such that $\drift\subset\distr^s\setminus\distr^{s-1}$. 
	Let $q,\,q'\in M$ 
	be such that there exists a set of privileged coordinates adapted to $\drift$ at $q$. 
	Then, there exists $C,\eps_0,\maxtime>0$ such that, for any $u\in\contset$  such that, for some $T<\maxtime$, $q_u(T)=q'$ and $\costuno(u,T)<\eps_0$, it holds
	\[
		\costuno(u,T) \le \costdue(u,T) \le C \costuno(u,T).
	\]
\end{prop}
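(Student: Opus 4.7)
The left-hand inequality $\costuno(u,T) \le \costdue(u,T)$ is immediate from the pointwise bound $\sqrt{\sum_j u_j^2} \le \sqrt{1 + \sum_j u_j^2}$ integrated over $[0,T]$. For the right-hand inequality, I would first use the elementary estimate $\sqrt{1+x^2} \le 1 + x$, valid for $x \ge 0$, to deduce
\begin{equation*}
\costdue(u,T) \le T + \costuno(u,T).
\end{equation*}
Hence the proposition reduces to producing a constant $C_0 > 0$ for which $T \le C_0\,\costuno(u,T)$ in the regime $T < \maxtime$, $\costuno(u,T) < \eps_0$; the claimed inequality will then hold with $C = 1 + C_0$.

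To obtain this time bound I would fix a privileged coordinate system $z$ adapted to $\drift$ at $q$ in the sense of Definition~\ref{def:normdrift}, so that $z_*\drift \equiv \partial_{z_\ell}$. Along the trajectory $q_u$ of \eqref{cs:d} starting at $q$, since $\drift z_\ell \equiv 1$ the coordinate ODE reads
\begin{equation*}
\tfrac{d}{dt} z_\ell(q_u(t)) = 1 + \sum_{j=1}^m u_j(t)\,(f_j z_\ell)(q_u(t)).
\end{equation*}
Once $\eps_0$ and $\maxtime$ are chosen small enough, Theorem~\ref{thm:cont} together with the inclusion $\bdt{q}{\eps} \subset \Pi_T(C\eps)$ of Theorem~\ref{thm:bbox} confines the whole trajectory to a fixed compact neighborhood of $q$ on which each $f_j z_\ell$ is uniformly bounded by some $K$. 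Integrating over $[0,T]$ and using $z_\ell(q) = 0$ then gives the identity
\begin{equation*}
z_\ell(q') \;=\; T + R, \qquad |R| \le K\,\costuno(u,T).
\end{equation*}

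Closing the estimate splits according to the position of $q'$ relative to the drift orbit of $q$. If $z_\ell(q') = 0$ the identity immediately yields $T \le K\,\costuno(u,T)$, which is what we want. If $z_\ell(q') \ne 0$, one shrinks $\maxtime$ and $\eps_0$ further depending on $q'$ so that the identity $T = z_\ell(q') - R$ becomes incompatible with the hypotheses: concretely, taking $\eps_0 < |z_\ell(q')|/(2K)$ and $\maxtime < |z_\ell(q')|/2$ empties the set of admissible controls, so the inequality holds vacuously. The resulting constant $C$ depends on $|z_\ell(q')|$, but this is consistent with the dependence on $q,q'$ allowed by the statement.

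The main obstacle is therefore not the algebra but rather the careful selection of the constants $\eps_0, \maxtime$ and the verification that trajectories with small $L^1$-cost really stay in a chart where $f_j z_\ell$ can be uniformly controlled. For the latter, one should invoke the sharp Ball-Box inclusion \eqref{eq:bboxdriftnegative} on the negative half-space $\{z_\ell \le 0\}$ (yielding $|z_i(q_u(t))| \le (C\eps)^{w_i}$ for the transverse coordinates), combined with the ODE estimate for $z_\ell$ itself, to show that the trajectory is trapped in a box of size $O(\eps_0^{1/r})$ throughout $[0,T]$; only then does the integration constant $K$ become independent of $u$.
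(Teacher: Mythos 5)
Your proof is correct and shares the paper's opening reduction $\costdue(u,T)\le T+\costuno(u,T)$, but it then takes a genuinely different route to the time bound. The paper simply cites Lemma~\ref{lem:stimatau}, which gives the sharper estimate $T\le C\bigl(\costuno(u,T)^s+z_\ell(q_u(T))^+\bigr)$ and hence the explicit constant $C=1+C\eps_0^{s-1}$ (so that $C\to1$ as $\eps_0\to0$). You instead re-derive a weaker, linear version of that lemma from scratch by integrating the coordinate ODE $\tfrac{d}{dt}z_\ell(q_u(t))=1+\sum_j u_j(t)\,(f_j z_\ell)(q_u(t))$; this only yields $T\le K\costuno(u,T)+z_\ell(q')^+$, so your final constant $C=1+K$ does not degenerate as $\eps_0\to0$, but the bound is still sufficient for the equivalence being claimed. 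What your approach buys is self-containedness (an elementary Gronwall-type estimate replaces an appeal to a result from \cite{prandi2013}) and, notably, an explicit treatment of the case $z_\ell(q')\neq0$ via the vacuity argument with shrunken $\eps_0,\maxtime$ — a case which the paper's written proof silently drops the $z_\ell(q')^+$ term for. Two small points worth tightening: you should pass from $\sum_j|u_j|$ to $\sqrt{\sum_j u_j^2}$ via a factor $\sqrt m$ when bounding $|R|$ by $K\costuno(u,T)$, and to confine the trajectory in a fixed compact set it is enough to note that $q_u(t)\in\bdt{q}{\eps_0}\subset\Pi_{T_0}(C\eps_0)$ for all $t\in[0,T]$ (since restrictions of $u$ have no larger $\xLone$-cost); the finer Ball--Box box-shape estimate you invoke at the end is not actually needed for the boundedness of $f_jz_\ell$.
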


The proof of this fact relies on the following particular case of \cite[Lemma 25]{prandi2013}.

\begin{lem}\label{lem:stimatau}
	Assume that there exists $s\in\nat$ such that $\drift\subset\distr^s\setminus\distr^{s-1}$. 
	Let $q\in M$ and let $z=(z_1,\ldots,z_n)$ be a system of privileged coordinate system adapted to $\drift$ at $q$.
	Then, there exist $C,\eps_0,\maxtime>0$ such that, for any $u\in \contset$,  with $\costuno(u,T) < \eps_0$ for some $T<\maxtime$,  it holds
	\[
		T \le C\big( \costuno(u,T)^s + z_\ell(q_u(T))^+ \big).
	\] 
	Here, we let $\xi^+=\max\{ \xi,0\}$.
\end{lem}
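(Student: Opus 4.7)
The plan is to use the variation formula \eqref{eq:split} to transfer the problem to the time-dependent sub-Riemannian system \eqref{cs:td}, where $\drift$ has been ``absorbed'' into the vector fields. Set $\tilde q_u(t) = e^{-t\drift}(q_u(t))$; by \eqref{eq:split}, $\tilde q_u$ is the trajectory starting at $q$ of the time-dependent system driven by $g_i^t := (e^{-t\drift})_* f_i$, and in particular $\tilde q_u(T) = e^{-T\drift}(q_u(T))$. Since $z$ is adapted to $\drift$, the flow of $\drift$ reads in $z$-coordinates simply as a translation in the $\ell$-th coordinate, so
\[
    z_\ell(\tilde q_u(T)) = z_\ell(q_u(T)) - T.
\]

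The next step is a uniform Ball-Box estimate applied to $\tilde q_u(T)$. For $T_0$ small enough, the smooth family $\{g_i^t\}_{t \in [0, T_0]}$ is a perturbation of $\{f_i\}$ obtained by push-forward through a diffeomorphism, hence it preserves the flag $\distr^k$ at $q$ and the non-holonomic orders of the coordinate functions $z_i$. Therefore $z$ is a privileged coordinate system for $\{g_1^t,\ldots,g_m^t\}$ at $q$ uniformly in $t \in [0, T_0]$. The uniform Ball-Box Theorem (Remark~\ref{rmk:bbuniform}), applied to the sub-Riemannian ball of the time-dependent system around $q$, then yields constants $C, \eps_0 > 0$ such that, whenever $\costuno(u, T) < \eps_0$ and $T < T_0$, the trajectory $\tilde q_u$ (of $\xLone$-cost equal to $\costuno(u,T)$) satisfies $\tilde q_u(T) \in \bbox{C\,\costuno(u,T)}$, so that
\[
    |z_\ell(\tilde q_u(T))| \le \bigl(C\,\costuno(u,T)\bigr)^{w_\ell} = C^s\,\costuno(u,T)^s.
\]

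Combining the two previous displays gives $T \le z_\ell(q_u(T)) + C^s\, \costuno(u,T)^s$. If $z_\ell(q_u(T)) \ge 0$, this is exactly the desired inequality; otherwise $z_\ell(q_u(T))^+ = 0$ and the bound already reads $T \le C^s\, \costuno(u,T)^s$. In both cases $T \le C'\bigl(\costuno(u,T)^s + z_\ell(q_u(T))^+\bigr)$ with $C' = \max\{1, C^s\}$, which is the claim. The main obstacle in executing this plan is the uniform Ball-Box step: one must verify that the constants in the Ball-Box Theorem can be chosen independently of the time-dependent perturbation for $t \in [0, T_0]$. This reduces, via a compactness argument, to the smooth dependence of $(e^{-t\drift})_* f_i$ on $t$ and to the fact that the diffeomorphisms $e^{-t\drift}$ preserve the Lie algebraic structure at $q$, so that the weights and the privileged character of $z$ are preserved through the perturbation.
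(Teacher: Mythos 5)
First, a caveat: the paper does not prove this lemma at all --- it is imported verbatim as a particular case of \cite[Lemma~25]{prandi2013} --- so there is no in-paper proof to compare against; I can only judge your argument on its own terms. Your overall strategy (pass to the time-dependent system \eqref{cs:td} via the variation formula \eqref{eq:split}, use that $e^{-T\drift}$ acts in adapted coordinates as the translation $z_\ell\mapsto z_\ell-T$, and then bound $|z_\ell(\tilde q_u(T))|$) is the right one, and the elementary bookkeeping at the end is fine. The gap is exactly at the step you yourself flagged as the main obstacle, and it is not merely a missing verification: the claimed containment $\tilde q_u(T)\in\bbox{C\,\costuno(u,T)}$ is \emph{false} in general. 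The point is that $e^{-t\drift}$ does not fix the base point, so relative to the fixed coordinates $z$ at $q$ the fields $g_i^t=(e^{-t\drift})_*f_i=f_i\mp t\,[\drift,f_i]+\bigo(t^2)$ acquire components in the strictly higher stratum $\distr^{s+1}$, of size proportional to $t$; for $t$ of order $T_0$ (fixed) rather than of order $\eps$, these are not absorbed by the weighted box. Concretely, take $f_1=\partial_x$, $f_2=\partial_y+x\partial_z$ on $\real^3$ and $\drift=(1+y)\partial_z\subset\distr^2\setminus\distr$ (so $s=2$). In the adapted privileged coordinates $(x,y,w)$ with $w=z/(1+y)$ one computes $g_2^t=\partial_y+\frac{x-w-t}{1+y}\,\partial_w$, and the control $u=(0,\eps/T)$ yields $w(\tilde q_u(T))=-\eps T/2+\smallo(\eps T)$, which for fixed $T$ and $\eps\downarrow0$ lies far outside $\bbox{C\eps}$, whose $w$-slot has size $(C\eps)^2$. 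Hence the inequality $|z_\ell(\tilde q_u(T))|\le C^s\costuno(u,T)^s$, on which your conclusion rests, fails.

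What is actually true, and what \cite[Lemma~25]{prandi2013} (equivalently, the upper inclusion $\bdt q\eps\subset\Pi_T(C\eps)$ of Theorem~\ref{thm:bbox}, whose $T$-dependent corrections are precisely the trace of these higher-stratum terms) provides, is an estimate of the form $|z_\ell(\tilde q_u(T))|\le C\big(\costuno(u,T)^s+T\,\costuno(u,T)\big)$; proving it requires a genuine bootstrap on the time-dependent system, not an appeal to Remark~\ref{rmk:bbuniform}, which concerns varying base points for a fixed family of fields rather than varying fields. Your conclusion can be rescued from this weaker bound: it gives $T\le z_\ell(q_u(T))^++C\costuno(u,T)^s+CT\costuno(u,T)$, and for $\costuno(u,T)<\eps_0:=1/(2C)$ the last term is absorbed into the left-hand side. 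But as written your proof rests on a false containment, and the estimate that replaces it is essentially the entire content of the cited lemma.
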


This Lemma is crucial, since it allows to bound the time of definition of any control through its cost.
We now prove Proposition~\ref{prop:eq}.

\begin{proof}[Proof of Proposition~\ref{prop:eq}]
	The first inequality is trivial. The second one follows by applying Lemma~\ref{lem:stimatau}, and computing
	\[
		 \costdue(u,T) \le T + \costuno(u,T) \le (C\eps_0^{s-1} + 1)\costuno(u,T).
	\]	
\end{proof}

\section{First results on complexities}
\label{sec:first}

In this section we collect some first results regarding the various complexities we defined. 

Firstly, we prove a result on the behavior of complexities.
For all the complexities under considreation, except the interpolation by time complexity, such result will hold with respect to a generic cost function $J:\contset\to [0,+\infty)$, satisfying some weak hypotheses.

\begin{prop}
  \label{prop:complexplode}
	Assume that for any $q_1\in M$ and any $q_2\notin \{ e^{t\drift}q_1 \}_{t\in[0,\maxtime]}$, it holds $\val^\cost(q_1,q_2)>0$.
	Then, the following holds.
	\begin{enumerate}[i.]
	\item For any curve $\Gamma\subset M$ it holds the following.
		\begin{enumerate}
			\item If the maximal time of definition of the controls, $\maxtime$, is sufficiently small, then
			$\lim_{\eps\downarrow0}\ccost(\Gamma,\eps)=\lim_{\eps\downarrow0}\capp(\Gamma,\eps) = +\infty. $
			\item If $\Gamma$ is an admissible curve for \eqref{cs:d}, then $\eps\ccost(\Gamma,\eps)$ and $\eps\capp(\Gamma,\eps)$ are bounded from above, for any $\eps>0$.
		\end{enumerate}
		
	\item For any path $\gamma:[0,T]\to M$ it holds the following.
		\begin{enumerate}
        \item 
			If $\gamma$ is not a solution of  \eqref{cs:d}, $\lim_{\eps\downarrow0}\cneig(\gamma,\eps) = +\infty$.
        \item If the cost is either $\costuno$ or $\costdue$,  $\drift\subset\distr^s\setminus\distr^{s-1}$, $\dot\gamma(t)\subset\distr^k(\gamma(t))\setminus\distr^{k-1}(\gamma(t))$ and $\drift(\gamma(t))\neq\dot\gamma(t)\mod\distr^{s-1}(\gamma(t))$ for any $t\in[0,T]$, then $\lim_{\eps\downarrow0}\ctime(\gamma,\eps)=+\infty$ whenever $\maxtimectime<\eta$.
        \item If $\gamma$ is an admissible curve for \eqref{cs:d}, then $\eps{\ctime(\gamma,\eps)}$ and $\eps\cneig(\gamma,\eps)$ are bounded by above, for any $\delta,\eps>0$.
		\end{enumerate}
	\end{enumerate}
\end{prop}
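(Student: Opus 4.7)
The upper bounds (i)(b) and (ii)(c) are immediate. If an admissible control $u_0\in\contset$ defined on some $[0,T_0]$ traces $\Gamma$ (respectively $\gamma$), then it is a valid competitor in all the infima. For $\ccost,\capp,\cneig$ this directly gives $\eps\cdot\text{(complexity)}\le J(u_0,T_0)$. For $\ctime$, the pair $u=u_0$ (a $\delta$-time interpolation for every $\delta$, since $q_{u_0}\equiv\gamma$) together with $\delta=\eps/J(u_0,T_0)$ yields $\eps\,\ctime(\gamma,\eps)\le T_0\,J(u_0,T_0)$.

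For the divergence claims I would argue by contradiction via a single scheme. Suppose complexity is bounded by some $C$ along $\eps_n\downarrow 0$, and extract competing controls $u_n$ on $[0,T_n]$ with $T_n\le\maxtime$ (or $T_n=T$ for $\cneig,\ctime$). In each case the cost satisfies $J(u_n,T_n)\to 0$: for $\ccost,\capp,\cneig$ this is built into the definition, while for $\ctime$ the joint inequalities $T/\delta_n\le C$ and $\delta_n J(u_n,T)\le\eps_n$ force $J(u_n,T)\le C\eps_n/T\to 0$. When $\cost=\costdue$, the bound $J\ge T$ forces $T_n\to 0$, contradicting either $q_{u_n}(T_n)=y\neq x$ (for $\ccost,\capp$) or positivity of the fixed parametrization time $T>0$ (for $\cneig,\ctime$). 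When $\cost=\costuno$ instead, $\|u_n\|_{\lcont}\to 0$ and continuous dependence on the $L^1$-control give $q_{u_n}(\cdot)\to e^{(\cdot)\drift}x$ uniformly on $[0,\maxtime]$.

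I would close the $\cneig$ case by passing $q_{u_n}(t)\in\bsr(\gamma(t),\eps_n)$ to the limit, obtaining $\gamma(t)=e^{t\drift}x$ for every $t\in[0,T]$; hence $\gamma$ is the solution of \eqref{cs:d} with $u\equiv 0$, contradicting the hypothesis. For $\capp$ and $\ccost$ the analogous limit forces $y=e^{T^*\drift}x$ for some $T^*\le\maxtime$; choosing $\maxtime$ strictly smaller than $\inf\{T^*>0:y=e^{T^*\drift}x\}$ (taken as $+\infty$ if no such $T^*$ exists, and strictly positive since $y\neq x$) produces the contradiction and gives meaning to ``$\maxtime$ sufficiently small''.

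The genuinely delicate case is $\ctime$ with $\cost=\costuno$, because $\delta_n\ge T/C$ does not shrink, so uniform convergence of $q_{u_n}$ to the drift trajectory is not by itself incompatible with the $\delta_n$-interpolation condition. Here I would invoke the Ball-Box Theorem~\ref{thm:bbox} for control-affine reachable sets inside a continuous coordinate family for $\gamma$ adapted to $\drift$ (Definition~\ref{def:normadapted}). On each sub-interval $[t_{k-1}^n,t_k^n]$ between consecutive matching times, $\gamma(t_k^n)$ lies in the reachable box from $\gamma(t_{k-1}^n)$, yielding transverse estimates $|z_i|\le C(\eps_n/\delta_n)^{w_i}$, whereas Proposition~\ref{prop:privilegedproperty} together with Lemma~\ref{lem:partizione} force one transverse coordinate of $\gamma(t_k^n)-\gamma(t_{k-1}^n)$ to have absolute value at least $c\,\delta_n$, as a concrete manifestation of $\drift(\gamma(t))\neq\dot\gamma(t)\mod\distr^{s-1}(\gamma(t))$. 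The threshold $\eta$ on $\maxtimectime$ is precisely the $\xi_0$ of Lemma~\ref{lem:partizione}, which controls both estimates uniformly in $t$. Comparing the two then yields $\delta_n\to 0$, contradicting $\delta_n\ge T/C$. I expect the main technical obstacle of the proof to be exactly this compatibility check, requiring the joint use of Theorem~\ref{thm:bbox} and the transverse-growth estimates from Section~\ref{sec:normal_forms}.
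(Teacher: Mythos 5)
Your overall strategy (extract competitors from a hypothetical bound, force the cost to vanish, and derive a contradiction) captures the main lines of the paper's argument, and your upper-bound remarks for (i)(b) and (ii)(c) match the paper verbatim in spirit. However, there are points where you work harder than necessary, and one point where you should be more careful.

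For (i)(a) the paper does not need continuous dependence on the $\xLone$-control at all: by definition any competitor $u$ for $\ccost(\Gamma,\eps)$ or $\capp(\Gamma,\eps)$ steers from $x$ to $y$, so $J(u,T)\ge\val^\cost(x,y)$. Choosing $\maxtime$ small enough that $y\notin\{e^{t\drift}x\}_{t\in[0,\maxtime]}$ (possible since $y\neq x$), the standing hypothesis gives $\val^\cost(x,y)>0$ and the divergence is immediate from $\ccost(\Gamma,\eps)\ge\val^\cost(x,y)/\eps$. Your compactness/continuous-dependence route reaches the same conclusion but invokes a fact (uniform convergence of $q_{u_n}$ to the drift flow as $\|u_n\|_{\xLone}\to 0$) that is not established in the paper; you would have to justify it separately, and it is avoidable. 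By contrast, for (ii)(a) your limit argument is actually \emph{more} explicit than the paper, which merely asserts ``the same reasoning holds'' for $\cneig$; the direct lower bound $\val^\cost(\gamma(0),\gamma(T))/\eps$ does not obviously suffice there because ``$\gamma$ is not a solution'' does not preclude the endpoints lying on a common drift orbit, so your passage to the limit is the right thing to do.

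For (ii)(b) your sketch is close to the paper's but with a different packaging. The paper abstracts the transversality estimate into a single implication derived from Lemma~\ref{lem:stimatau}: on a subinterval $I$ of $[0,T]$ with $|I|>\maxtimectime$, small value $\val^\costuno(\gamma(t_1),\gamma(t_2))$ forces small gap $t_2-t_1$; then since $\delta_\eps$ is bounded below and the partition has step $\le\delta_\eps<\maxtimectime<|I|$, some consecutive pair falls inside $I$, and the contradiction follows. Your version replaces this with an explicit comparison between the $\Pi_T$-box of Theorem~\ref{thm:bbox} (shrinking, since the per-interval cost is $\le\eps_n/\delta_n\to 0$) and a transverse-coordinate growth of order $\delta_n$. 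This works, but note that Lemma~\ref{lem:partizione} as stated applies only when $\dot\gamma(t)\in\distr^s(\gamma(t))$, i.e.\ when $k\le s$; if $k>s$ one should instead appeal to Proposition~\ref{prop:privilegedproperty} alone to produce a transverse coordinate of weight $k>s$ growing linearly, which is then incompatible with the reachable-set box. You should also be explicit that the threshold $\eta$ on $\maxtimectime$ is chosen to guarantee that at least one interpolation time falls in the region where the transversality estimate is uniform; identifying $\eta$ with the $\xi_0$ of Lemma~\ref{lem:partizione} is morally right but the role it plays is to bound the partition mesh, not directly the coordinate estimates.
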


\begin{proof}
	The last statement for curves and paths follows simply by considering the control whose trajectory is the curve or the path itself, which is always admissible regardless of $\eps$.

	We now prove the first statement for the interpolation by cost complexity of a curve $\Gamma$.
	The same reasonings will hold for $\capp$ and $\cneig$.
	Let $x,y$ be the two endpoints of $\Gamma$ and assume $\maxtime$ to be sufficiently small so that $\val^\cost(x,y)>0$.
	Then, the first statement follows from
	\[
		\lim_{\eps\downarrow0}\ccost(\Gamma,\eps)\ge V(x,y) \lim_{\eps\downarrow0}\frac 1 \eps =+\infty.
	\] 

	Consider now the interpolation by time complexity and proceed by contradiction.
	Namely, let us assume that there exists a constant $C>0$ such that $\ctime(\gamma,\eps)\le C$ for any $\eps>0$.
	Then, by definition of $\ctime$, this implies that for any $\eps>0$ there exists $\delta_\eps\in[CT/2,\maxtimectime)$ and a $\delta_\eps$-time interpolation $u_\eps\in\lcont$ such that $\delta_\eps\cost(u_\eps,T)\le \eps$.

    Firstly, observe that by Lemma~\ref{lem:stimatau} and the assumptions on $\drift$ and $\dot\gamma$, we obtain that there exist $\eta>0$ and an interval $I\subset [0,T]$, with $|I|>\eta$, such that
	\begin{equation}
      \label{eq:c3}
		V\big(\gamma(t_1^h),\gamma(t_2^h)\big){\longrightarrow} 0 \text{ as } h\downarrow0 \implies t_2^h-t_1^h {\longrightarrow} 0 \text{ as } h\downarrow0,
	\end{equation}
	whenever $t_1^h\in I$ and $t_2^h>t_1^h$ for any $h$ in a right neighborhood of zero.

	For any $\eps$, let $0=t_0^\eps<t_1^\eps<\ldots<t_{N_\eps}^\eps=T$ be a partition of $[0,T]$ such that $q_{u_\eps}(t_i^\eps)=\gamma(t_i^\eps)$ for any $i\in\{0,N_\eps\}$ and $t_i^\eps-t_{i-1}^\eps\le\delta_\eps$.
	It is clear that, up to removing some $t_i^\eps$'s, we can assume that $t_i^\eps-t_{i-1}^\eps\ge\delta_\eps/2\ge CT/4$.
	Let us fix, $\tau_1^\eps=t_{i_\eps}^\eps\in I$ for some index $i_\eps$ and $\tau_2^\eps=t_{i_\eps+1}^\eps$.
	Such $\tau_1^\eps$ always exists, since $|I|>\delta_0$.
	Since, by the definition of $u_\eps$ and the choice of the cost, follows that $\val^\cost(\gamma(\tau_1^\eps),\gamma(\tau_{2}^\eps))\rightarrow 0$ as $\eps\downarrow 0$ we obtain a contradiction.
    In fact, this implies that
	\[
		0= \lim_{\eps\downarrow 0} \big(\tau_2^\eps-\tau_1^{\eps}\big)\ge \frac{CT}4 >0.
	\]
\end{proof}

\begin{rmk}
  Result ii.b, regarding the interpolation by time complexity, holds for any cost satisfying the assumptions of Proposition~\ref{prop:complexplode}, such that for any path $\gamma$ it holds \eqref{eq:c3}, and that, for any $u\in\lcont$, there exists a constant such that, if $t_1,t_2\in[0,T]$, $t_1<t_2$, then 
	\[
		\cost(u|_{[t_1,t_2]}(\cdot+t_1),t_2-t_1)\le C\cost(u,T).
	\]
\end{rmk}

\begin{figure}[tb]
	\begin{center}
\begin{tikzpicture}
	[decoration={
      markings,
      mark=at position 1 with {\arrow[scale=1.5]{stealth}};
    }]
	
	
	\begin{scope}
		\draw (.5,1) node[above,scale=.9] {$\gamma(\cdot)$};
		\draw (0,0) arc (180:0:.5 and 1);
		\draw (1,0) arc (-180:0:.5 and 1);
		\draw (2,0) arc (180:0:.5 and 1);
		\draw (3,0) arc (-180:0:.5 and 1);
		\draw (4,0) arc (180:0:.5 and 1);
		\draw (5,0) arc (-180:0:.5 and 1);
	\end{scope}
	
	\node[below] at (0,0) {$x$};
	
	\node[above] at (6,0) {$y$};
	
	\node[below right,scale=.9] at (2.3,-1) {$e^{j \frac T N f_0}(x)$};
	\draw[->] (2.5,-1.1) -- (2.1,-.1);
	
	\foreach \x in {0,...,6}  
		{
		\filldraw[black] (\x,0) circle (1pt); 
		} 
	
	\draw (0,0) -- (6,0);
	\draw (3.5,1) node[above,scale=.9] {$e^{\cdot f_0}(x)$};
	\draw[->] (3.5,1) -- (3.5,.1);
	
	\draw[postaction={decorate}] (.2,-2) -- (1.3,-2) node[midway, above] {$f_0$};
\end{tikzpicture}
	\end{center}
	\caption{An example of a curve satisfying Remark~\ref{rmk:timeboundmot}, with a rectified drift.}
	\label{fig:maxtimectime}
\end{figure}
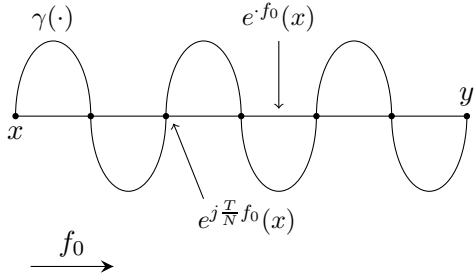

\begin{rmk}
   	\label{rmk:timeboundmot}
   	The bound on $\maxtimectime$ in  Proposition~\ref{prop:complexplode} is essential.
   	For example, consider the cost $\costuno(u,T)=\|u\|_{\lcont}$, and a curve such that, for some $N\in\nat$, it holds $\gamma(j T/N)=e^{j ({T}/{N}) \drift}(\gamma(0))$ for any $j=1,\ldots,T/N$ (see, e.g., Figure~\ref{fig:maxtimectime}).
   	In this case, the null control is a $ (T/N)$-time interpolation of $\gamma$, with $\costuno(0,T)=0$.
   	In particular, if $\maxtimectime>T/N$, it holds $\ctime(\gamma,\eps) \le N$.
\end{rmk}

In the following, we will denote with an apex ``SR-s'' -- e.g. $\srsccost$ -- the complexities associated with the small sub-Riemannian system \eqref{cs:srs} defined at p.~\pageref{cs:srs}, and with an apex ``SR-b'', e.g. $\srbccost$, the ones associated with the big sub-Riemannian system \eqref{cs:srb}. 

We immediately get the following.

\begin{prop}\label{prop:srs}
	Let $\Gamma\subset M$ be a curve and $\gamma:[0,T]\to M$ be a path. 
	\renewcommand{\theenumi}{\roman{enumi}}
	\begin{enumerate}
		\item Any complexity relative to the cost $\costuno$ is smaller than the same complexity relative to $\costdue$.	 Namely, for any $\eps,\delta>0$, it holds		
		\begin{gather*}
			\ccostcostuno(\Gamma,\eps) \le \ccostcostdue(\Gamma,\eps), \quad  \cappcostuno(\Gamma,\eps) \le \cappcostdue(\Gamma,\eps), \\
			\ctimecostuno(\gamma,\eps) \le \ctimecostdue(\gamma,\eps),\quad \cneigcostuno(\gamma,\eps) \le \cneigcostdue(\gamma,\eps).
		\end{gather*}
		
		\item For any cost, the neighboring approximation complexity of some path is always bigger than the tubular approximation complexity of its support.
		Namely, for any $\gamma:[0,T]\to M$ and any $\eps>0$, it holds
		\[
			\cappcostuno(\gamma([0,T]),\eps)\le\cneig^{\costuno}(\gamma,\eps),\qquad \cappcostdue(\gamma([0,T]),\eps)\le\cneigcostdue(\gamma,\eps) 
		\]

		\item Any complexity relative to the cost $\costdue$ is bigger than the same complexity computed for the system \eqref{cs:srb}. Namely, for any $\eps,\delta>0$, it holds	
		\begin{gather*}
			\srbccost(\Gamma,\eps) \le \ccostcostdue(\Gamma,\eps), \quad  \srbcapp(\Gamma,\eps) \le \cappcostdue(\Gamma,\eps), \\
			\srbctime(\gamma,\eps) \le \ctimecostdue(\gamma,\eps),\quad \srbcneig(\gamma,\eps) \le \cneigcostdue(\gamma,\eps).
		\end{gather*}
		\item In the case of curves, the complexities relative to the cost $\costdue$ are always smaller than the same complexities computed for the system \eqref{cs:srs}. Namely, for any $\eps>0$ it holds
		\begin{equation*}
			\ccostcostdue(\Gamma,\eps) \le\srsccost(\Gamma,\eps),\quad \cappcostdue(\Gamma,\eps) \le\srscapp(\Gamma,\eps).
		\end{equation*}
	\end{enumerate}
\end{prop}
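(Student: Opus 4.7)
The plan is to dispatch the first three inequalities by direct comparisons — pointwise cost bounds and set inclusions between admissible controls — and to reserve the real work for (iv), which calls for an explicit construction based on Theorem~\ref{thm:cont}. For (i), the pointwise inequality $\sqrt{\sum u_j^2}\le\sqrt{1+\sum u_j^2}$ gives $\costuno(u,T)\le\costdue(u,T)$. Since each of the four complexities is defined as the infimum of a cost over a set of controls characterized by conditions on the trajectory only (endpoints, passage through $\Gamma$, tube or neighborhood inclusion, interpolation spacing), every $\costdue$-$\eps$-admissible control is a fortiori $\costuno$-$\eps$-admissible, and its $\costuno$-cost is no larger; taking infima yields all four inequalities.

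For (ii), if $u\in\lcont$ satisfies $q_u(t)\in\bsr(\gamma(t),\eps)$ for every $t\in[0,T]$, then a fortiori $q_u(t)\in\tube(\gamma([0,T]),\eps)$ for every $t$, and the endpoint conditions agree, so the set of admissible controls for $\cneig(\gamma,\eps)$ is contained in that of $\capp(\gamma([0,T]),\eps)$. For (iii), the map $u=(u_1,\ldots,u_m)\mapsto \tilde u=(1,u_1,\ldots,u_m)$ sends every control of \eqref{cs:d} to a control of \eqref{cs:srb} with identical trajectory and \eqref{cs:srb}-$L^1$ cost equal to $\costdue(u,T)$. Since the distance associated with \eqref{cs:srb} is dominated by that of \eqref{cs:srs}, the \eqref{cs:srb}-balls contain the \eqref{cs:srs}-balls, and every trajectory-based constraint appearing in the $\costdue$-complexities of \eqref{cs:d} also holds for $\tilde u$ in the corresponding \eqref{cs:srb}-complexity; taking infima yields the four inequalities.

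The real content is (iv). The key input is the bound $\val^\costdue(q,q')\le \dsrs(q,q')$ obtained by setting $t=0$ in Theorem~\ref{thm:cont}, and essentially equivalent to Proposition~\ref{prop:costole} combined with the trivial $\costdue(u,T)\le T+\costuno(u,T)$. Given an $\eps$-cost \eqref{cs:srs}-interpolation $v$ of $\Gamma$ with interpolation times $0=t_0<\cdots<t_N=T_v$ and piece costs $\eps_i=\|v|_{[t_{i-1},t_i)}\|_{\xLone}\le\eps$, the plan is, on each piece, to replace $v|_{[t_{i-1},t_i)}$ by a (D)-control $u_i$ of short time $\tau_i$ steering from $q_v(t_{i-1})$ to $q_v(t_i)$ with $\costdue(u_i,\tau_i)\le \eps_i+\mu_i$, where $\mu_i>0$ absorbs both the time $\tau_i$ and the fact that $\val^\costdue$ need not be attained; such $u_i$ is delivered by Proposition~\ref{prop:costole}. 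Concatenating the $u_i$ produces a $\costdue$-$\eps$-cost interpolation of $\Gamma$ whose total $\costdue$-cost is within $\sum \mu_i$ of $\costuno(v)$. The same construction handles $\cappcostdue$: the (D)-arcs can be chosen with $\tau_i$ arbitrarily small, hence with trajectory arbitrarily close in \eqref{cs:srs}-distance to $q_v$, so that the concatenation remains inside $\tube(\Gamma,\eps)$.

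The main obstacle I anticipate is the borderline case in which some $\eps_i$ equals $\eps$: then the bound $\costdue(u_i,\tau_i)\le\eps_i+\mu_i$ cannot be pushed below $\eps$, and the concatenated control fails the piecewise bound required by the definition of a $\costdue$-$\eps$-cost interpolation. The plan is to sidestep this by approximating $\srsccost(\Gamma,\eps)$ and $\srscapp(\Gamma,\eps)$ with $\eps'$-cost objects, $\eps'<\eps$, applying the above construction with the slack $\eps-\eps'$ distributed among the $\mu_i$, and letting $\eps'\uparrow\eps$; the monotonicity of the sub-Riemannian complexities in $\eps$ and a routine refinement of \eqref{cs:srs}-interpolations into subadditive pieces should make the limit pass through, yielding $\ccostcostdue(\Gamma,\eps)\le \srsccost(\Gamma,\eps)$ and $\cappcostdue(\Gamma,\eps)\le\srscapp(\Gamma,\eps)$.
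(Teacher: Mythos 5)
Your handling of parts (i), (ii), and (iii) is correct and essentially identical to the paper's: pointwise cost comparison for (i), inclusion of admissible control sets for (ii), and the map $u\mapsto(1,u)$ for (iii). For (iv), the paper itself gives a one-line argument (``by $\val^\costdue\le\dsrs$, every $\eps$-cost interpolation for \eqref{cs:srs} is an $\eps$-cost interpolation for \eqref{cs:d}''), which cannot be taken literally since an \eqref{cs:srs}-control drives a different trajectory in \eqref{cs:d}; what is meant is precisely the piecewise replacement you propose. You correctly identify the genuine borderline issue — when $\dsrs(q_{i-1},q_i)=\eps$ the inequality $\val^\costdue(q_{i-1},q_i)\le\eps$ does not by itself produce a \eqref{cs:d}-control of $\costdue$-cost $\le\eps$, since minimizers for $\costdue$ need not exist — a point the paper silently glosses over. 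However, your proposed repair does not close the gap: passing $\eps'\uparrow\eps$ requires $\eps\mapsto\eps\,\srsccost(\Gamma,\eps)$ to be left-continuous, and this is the actual content of the claim, not a ``routine refinement''; as written, the non-increasing function $\eps\mapsto\eps\,\srsccost(\Gamma,\eps)$ could in principle jump at the value $\eps$ under consideration. Either prove the required continuity, or show directly that the infimum defining $\srsccost(\Gamma,\eps)$ can be approximated by interpolations whose pieces all satisfy $\dsrs(q_{i-1},q_i)<\eps$ strictly (e.g.\ via an intermediate-value argument along $\gamma$ to split any saturated piece); neither is immediate.

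There is a second, independent problem in your treatment of $\cappcostdue$. You claim that the short \eqref{cs:d}-arcs, having small $\tau_i$, are ``arbitrarily close in \eqref{cs:srs}-distance to $q_v$.'' This is not true: a short-time, low-cost \eqref{cs:d}-arc joining $q_v(t_{i-1})$ to $q_v(t_i)$ stays in a small sub-Riemannian neighbourhood of its \emph{endpoints}, not of the geodesic piece of $q_v$ joining them; there is no reason for it to track $q_v$. The correct argument is different: by compactness of $q_v([0,T])$ there is $\eps'<\eps$ with $q_v([0,T])\subset\tube(\Gamma,\eps')$; then one subdivides $q_v$ into many pieces of small $\dsrs$-diameter, uses that each replacement \eqref{cs:d}-arc with small $\costuno$-cost and small time remains in a ball $\bsr$ of radius $<\eps-\eps'$ about its starting point (Theorem~\ref{thm:bbox} with $T$ small), and hence inside $\tube(\Gamma,\eps)$. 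Your phrasing suggests a different and incorrect mechanism, and should be revised.
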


\begin{proof}
	The inequality in $(ii)$ is immediate, since any control admissible for the $\cneig(\gamma,\eps)$ is also admissible for $\capp(\gamma([0,T]),\eps)$.

	On the other hand, the inequalities in $(iii)$ between the complexities in \eqref{cs:srb} and the ones in \eqref{cs:d}, with cost $\costdue$, is a consequence of the fact that, for every control $u\in\contset$, the trajectory $q_u$ is admissible for \eqref{cs:srb} and associated with the control $u_0=(1,u):[0,T]\to\real^{m+1}$ with $\|u_0\|_{\xLone([0,T],\real^{m+1})}=\costdue(u,T)$.
	The inequalities in $(i)$ between the complexities in \eqref{cs:d} with respect to the different costs follows from the fact that $\costuno\le\costdue$.

	Finally, to complete  the proof of the proposition, observe that, by Theorem~\ref{thm:cont}, it holds that
	\[
		\val^\costdue(q,q')\le \dsrs(q,q'),\quad\text{ for any }q,\,q'\in M.
	\]
	This shows, in particular, that every $\eps$-cost interpolation for \eqref{cs:srs}, is an $\eps$-cost interpolation for \eqref{cs:d}, proving the statement regarding the cost interpolation complexity in $(iv)$. 
	The part concerning the tubular approximation follows in the same way.
\end{proof}
  
We conclude this section by proving an asymptotic equivalence for the complexities of a control-affine system in a very special case.
In particular, we will prove that if we cannot generate the direction of $\Gamma$ with an iterated bracket of $f_0$ and some $f_1,\ldots,f_m$, then the curve complexities for the systems \eqref{cs:d}, \eqref{cs:srs} and \eqref{cs:srb} behaves in the same way. 

Let $\mathcal L_{\drift}$ be the ideal of the Lie algebra $\Lie(f_0,f_1,\ldots,f_m)$ generated by the adjoint endomorphism $\text{ad}(\drift): f \mapsto \text{ad}(\drift)f=[\drift,f]$, $f\in\text{Vec}(M)$. Then the following holds.

\begin{prop}
	\label{prop:firstestimate}
	Assume that there exists $s\in\nat$ such that $\drift\subset\distr^s\setminus\distr^{s-1}$,
	and let $\Gamma\subset M$ be a curve such that there exists $k\in\nat$ for which $T\Gamma\subset \Delta^k\setminus\Delta^{k-1}$. Assume, moreover, that for any $q\in\Gamma$ it holds that $T_q\Gamma\not\subset \mathcal L_{\drift}(q)$.
	Then, for sufficiently small $\maxtime$,
	\begin{equation}\label{eq:banal}
		\ccostcostuno(\Gamma,\eps)\asymp \ccostcostdue(\Gamma,\eps) \asymp \cappcostuno(\Gamma,\eps) \asymp \cappcostdue(\Gamma,\eps) \asymp \frac 1 {\eps^k}.
	\end{equation}
\end{prop}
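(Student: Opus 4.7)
The plan is to prove the upper bound by comparison with the sub-Riemannian complexities already controlled by Theorem~\ref{thm:srcomplexity}, and the lower bound by a single key estimate converting the transversality hypothesis $T_q\Gamma\not\subset\mathcal L_{\drift}(q)$ into quantitative control of the progress along $\Gamma$ achievable at $\costuno$-cost $\eps$. The upper bound is immediate: Proposition~\ref{prop:srs}(i) and (iv) give the chains $\ccostcostuno(\Gamma,\eps)\le\ccostcostdue(\Gamma,\eps)\le\srsccost(\Gamma,\eps)$ and $\cappcostuno(\Gamma,\eps)\le\cappcostdue(\Gamma,\eps)\le\srscapp(\Gamma,\eps)$, and Theorem~\ref{thm:srcomplexity} yields $\srsccost(\Gamma,\eps)\asymp\srscapp(\Gamma,\eps)\asymp 1/\eps^k$, so all four complexities are $\preccurlyeq 1/\eps^k$.

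For the lower bound I will establish the following key estimate: for $\maxtime$ sufficiently small there exist constants $c,C>0$ such that, whenever $u\in\contset$ has $\costuno(u,T_u)\le\eps$ and its trajectory connects $\gamma(\tau)$ and $\gamma(\tau')\in\Gamma$, one has $|\tau'-\tau|\le c T_u+C\eps^k$. Granted the estimate, the lower bound on $\ccostcostuno$ is a one-line counting argument: summing over the $N$ pieces of any $\eps$-cost interpolation gives $\length(\Gamma)\le c\sum_i T_{u,i}+NC\eps^k\le c\maxtime+NC\eps^k$, so $N\ge(\length(\Gamma)-c\maxtime)/(C\eps^k)$, which is $\succcurlyeq 1/\eps^k$ once $\maxtime<\length(\Gamma)/c$. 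For $\cappcostuno$, I would partition $\Gamma$ into arcs of parameter-length slightly larger than $C\eps^k$ and, for each endpoint, identify a time at which the trajectory is SR-$\eps$-close to it; combining the triangle inequality for $\val^{\costuno}$, the bound $\val^{\costuno}\le\dsrs$ from Theorem~\ref{thm:cont} and the key estimate forces each such control piece to cost strictly more than $\eps$, yielding the required lower bound on the total cost.

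To prove the key estimate, I would choose at $q=\gamma(\tau)$ an adapted frame $\{g_1,\ldots,g_n\}$ with $g_\ell=\drift$ of weight $s$ and, exploiting the hypothesis $T_q\Gamma\not\subset\mathcal L_{\drift}(q)$, a $g_\alpha$ of weight $k$ representing the direction of $T_q\Gamma$ outside $\mathcal L_{\drift}$. Forming the privileged coordinate system $z^q$ adapted to $\drift$ associated with this frame (Definition~\ref{def:normdrift}), Proposition~\ref{prop:privilegedproperty} yields $z_\alpha^q(\gamma(\tau+\xi))\asymp \xi$ and $|z_\ell^q(\gamma(\tau+\xi))|\le C|\xi|$. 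Theorem~\ref{thm:bbox} applied to $u$ places $q'$ in $\Pi_{T_u}(C\eps)$, which in the case $k\le s$ gives $|z_\alpha^q(q')|\le C\eps^k+C\eps\,T_u^{k/s}$ (the case $k>s$ is analogous), while Lemma~\ref{lem:stimatau} yields $T_u\le C(\eps^s+|z_\ell^q(q')|^+)\le C\eps^s+C|\tau'-\tau|$. Substituting and using $z_\alpha^q(q')\asymp|\tau'-\tau|$ produces an implicit inequality between $|\tau'-\tau|$, $T_u$ and $\eps$ from which the desired form $|\tau'-\tau|\le c T_u+C\eps^k$ follows by elementary algebra, provided $\maxtime$ is small enough. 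The main obstacle is precisely this self-referential step: the Ball-Box estimate injects a $T_u^{k/s}$ term and Lemma~\ref{lem:stimatau} injects a $|\tau'-\tau|$ term, so closing the loop requires careful bookkeeping, a uniform treatment of the cases $k\le s$ and $k>s$, and essential use of the smallness of $\maxtime$ to keep the constants from degenerating as $\eps\downarrow 0$.
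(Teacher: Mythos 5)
Your upper bound is exactly the paper's: Proposition~\ref{prop:srs}\,(i),(iv) together with Theorem~\ref{thm:srcomplexity} give $\ccostcostuno\le\ccostcostdue\le\srsccost\asymp 1/\eps^k$ and likewise for $\capp$. For the lower bound, however, you take a route that is genuinely different from, and much heavier than, the paper's, and your key estimate has a gap.

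The paper's lower bound is a one-paragraph algebraic reduction. The hypothesis $T_q\Gamma\not\subset\mathcal L_{\drift}(q)$ is used only to conclude that $T_q\Gamma\subset\Lie^k_q(f_0,\dots,f_m)\setminus\Lie^{k-1}_q(f_0,\dots,f_m)$: the big flag $\Lie^j(f_0,\dots,f_m)$ agrees with $\Delta^j$ modulo $\mathcal L_{\drift}$, so transversality to $\mathcal L_{\drift}$ means the tangent line sits in the same stratum for the big and for the small system. Theorem~\ref{thm:srcomplexity} applied to the big sub-Riemannian system \eqref{cs:srb} then gives $\srbccost\asymp\srbcapp\asymp 1/\eps^k$, and Proposition~\ref{prop:srs}\,(iii) supplies $\srbccost\le\ccostcostdue$ (and similarly for $\capp$), so $\ccostcostdue\succcurlyeq 1/\eps^k$; for $\costuno$ one then uses Proposition~\ref{prop:eq} to pass from $\costdue$ to $\costuno$. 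No privileged coordinates, no $\Pi_T$, no counting.

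Your alternative is a direct counting argument based on a claimed estimate $|\tau'-\tau|\le cT_u+C\eps^k$. The gap is in the derivation of this estimate. Theorem~\ref{thm:bbox} gives, for a coordinate $z_\alpha$ of weight $k\le s$ with $\alpha\neq\ell$, the bound $|z_\alpha^q(q')|\le C\eps^k + C\eps\,\xi^{k/s}$ with $\xi\le T_u$, and the set $\Pi_T(\eta)$ makes no distinction between coordinates whose dual vector lies in $\mathcal L_{\drift}$ and those that do not — so the transversality hypothesis never enters your estimate. Combining with $T_u\le C(\eps^s+|\tau'-\tau|)$ from Lemma~\ref{lem:stimatau} produces an inequality of the form $y\le C\eps^k+C\eps y^{k/s}$ for $y=|\tau'-\tau|$, which, when $k<s$, only yields $y\le C'\max\{\eps^k,\eps^{s/(s-k)}\}$. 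Since $s/(s-k)<k$ whenever $s>k^2/(k-1)$, this does \emph{not} give $y\preccurlyeq\eps^k$, and the counting argument breaks (the cross-term $\eps\sum_i T_{u,i}^{k/s}$ does not collapse to $O(\eps^k N)$ under $\sum_i T_{u,i}\le\maxtime$). The bound you want would require a refinement of Theorem~\ref{thm:bbox} that sees that the $z_\alpha$ direction, being transversal to $\mathcal L_{\drift}$, is unaffected by the drift-induced spreading — but that is precisely the content of the paper's algebraic observation, which then makes the whole computation unnecessary.
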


\begin{proof}
	By the fact that $T_q\Gamma\not\subset \mathcal L_{\drift}(q)$, follows that $T_q\Gamma\subset\Lie^k_q(f_0,f_1,\ldots,f_m)\setminus\Lie^{k-1}_q(f_0,f_1,\ldots,f_m)$.
	Thus, approximating $\Gamma$ in the big or in the small sub-Riemannian system is equivalent, and by Theorem~\ref{thm:srcomplexity} follows
	\begin{equation*}\label{eq:jean}
		\srsccost(\Gamma,\eps)\asymp \srbccost(\Gamma,\eps) \asymp\srscapp(\Gamma,\eps)\asymp\srbcapp(\Gamma,\eps) \asymp \frac 1 {\eps^k}.
	\end{equation*}
	The statement then follows by applying Proposition~\ref{prop:srs}.
\end{proof}

\begin{rmk}\label{rmk:trans}
  Observe that if $\drift\in\Delta$ in a neighborhood $U$ of $\Gamma$, it holds that $\Lie^k_q(f_0,f_1,\ldots,f_m)=\Delta^k(q)$ for any $q\in U$.
  Then, by the same argument as  above, we get that \eqref{eq:banal} holds.
  This shows that, where $\drift\subset\Delta$, the asymptotic behavior of complexities of curves  is the same as in the sub-Riemannian case. 
\end{rmk}

\section{Complexity of curves} 
\label{sec:complexity_for_curves_under_h_s_}

This section is devoted to prove the statement on curves of Theorem~\ref{thm:driftcomplexity}.
Namely, we will prove the following.

\begin{thm}
	\label{thm:curvegeneral}
	Assume that there exists $s\ge 2$ such that $\drift\subset\distr^s\setminus\distr^{s-1}$.
	Let $\Gamma\subset M$ be a curve and define $\kappa=\max\{ k \colon\: T_p\Gamma\in\Delta^k(p)\setminus\Delta^{k-1}(p) $ for some $p\in\Gamma\}$.
	Then, 
	if the maximal time of definition of the controls $\maxtime$ is small enough,
	\[
		\ccostcostuno(\Gamma,\eps)\asymp \ccostcostdue(\Gamma,\eps)\asymp \cappcostuno(\Gamma,\eps)\asymp \cappcostdue(\Gamma,\eps)\asymp \frac 1 {\eps^{\kappa}},
	\]
\end{thm}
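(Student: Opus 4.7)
The plan is to sandwich all four complexities between matching upper and lower bounds of order $1/\eps^\kappa$. By Proposition~\ref{prop:srs}(i), since $\ccostcostuno\le\ccostcostdue$ and $\cappcostuno\le\cappcostdue$, it suffices to establish the two upper bounds $\ccostcostdue(\Gamma,\eps),\cappcostdue(\Gamma,\eps)\preccurlyeq 1/\eps^\kappa$ and the two lower bounds $\ccostcostuno(\Gamma,\eps),\cappcostuno(\Gamma,\eps)\succcurlyeq 1/\eps^\kappa$.

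For the upper bounds, I would decompose $\Gamma$ into finitely many closed subarcs $\Gamma_1,\ldots,\Gamma_N$ on each of which $T_p\Gamma$ has constant degree $k_i\le\kappa$; such a stratification exists because the sets $\{p\in\Gamma\colon T_p\Gamma\subset\Delta^k(p)\}$ are closed, by continuity of $T\Gamma$ and the equiregularity of the flag. On each $\Gamma_i$, Theorem~\ref{thm:srcomplexity} gives $\srsccost(\Gamma_i,\eps)\asymp\srscapp(\Gamma_i,\eps)\asymp 1/\eps^{k_i}$, and Proposition~\ref{prop:srs}(iv) transfers these estimates to $\ccostcostdue(\Gamma_i,\eps),\cappcostdue(\Gamma_i,\eps)\preccurlyeq 1/\eps^{k_i}\le 1/\eps^\kappa$. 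Concatenating nearly-optimal controls across the stratification, with transitions between consecutive endpoints controlled by the continuity of $\val^\costdue$ in Theorem~\ref{thm:cont}, yields the desired upper bounds on $\Gamma$.

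For the lower bounds, I would select an open subarc $\Gamma'\subset\Gamma$ on which the tangent has uniform degree $\kappa$. Any admissible control for $\ccostcostuno(\Gamma,\eps)$ or $\cappcostuno(\Gamma,\eps)$ restricts to an admissible control for the corresponding complexity of $\Gamma'$, so it suffices to bound those from below. Along $\Gamma'$, I would set up a continuous coordinate family $\{z^p\}_{p\in\Gamma'}$ adapted to $\drift$ as in Definition~\ref{def:normadapted}, and invoke Proposition~\ref{prop:privilegedproperty} to obtain, at each $p\in\Gamma'$, a privileged coordinate $z_\alpha^p$ of weight $\kappa$ along which $\Gamma'$ grows linearly. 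Given an $\eps$-cost interpolation $u$ with partition $\{t_i\}$ and per-piece costs $\eps_i\le\eps$, Theorem~\ref{thm:bbox} (applied uniformly along $\Gamma'$ via Remark~\ref{rmk:bbdriftuniform}) confines $q_u(t_i)$ to $\Pi_{\maxtime}(C\eps_i)$ in the coordinates at $q_u(t_{i-1})$. Since $\alpha\neq\ell$, the definition of $\Pi_T(\eta)$ yields
\[
	|z_\alpha^{q_u(t_{i-1})}(q_u(t_i))|\le C\eps_i^\kappa + C\,\eps_i\,\maxtime^{\kappa/s};
\]
the first term dominates once $\maxtime$ is sufficiently small, and combined with the linear lower bound from Proposition~\ref{prop:privilegedproperty} and summed using $\sum_i\eps_i^\kappa\le\eps^{\kappa-1}\sum_i\eps_i$, this gives $\ccostcostuno(\Gamma',\eps)\succcurlyeq 1/\eps^\kappa$. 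The bound $\cappcostuno(\Gamma',\eps)\succcurlyeq 1/\eps^\kappa$ follows from a parallel argument in which the interpolation partition is replaced by successive points of closest sub-Riemannian approach of the trajectory to $\Gamma'$, and the tube constraint is converted into the control-affine Ball-Box setting via Theorem~\ref{thm:srballbox}.

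The principal technical obstacle lies in ensuring that the coordinate $z_\alpha^p$ provided by Proposition~\ref{prop:privilegedproperty} can always be chosen distinct from the drift coordinate $z_\ell^p$, i.e.\ that $T_p\Gamma'\not\subset \drift(p)+\Delta^{\kappa-1}(p)$. This transversality is automatic when $\kappa\neq s$, but may fail in the borderline case $\kappa=s$ where $T\Gamma'$ could be aligned with $\drift$ modulo $\Delta^{s-1}$. In that case the inclusion $\Xi_{\maxtime}(\eps/C)\subset \bdt q\eps$ of Theorem~\ref{thm:bbox} provides an essentially free portion of motion along $\Gamma'$ per $\maxtime$-unit of time, and the smallness hypothesis on $\maxtime$ in the theorem is precisely what bounds this drift-aided motion so that the Ball-Box estimate above retains its $\eps_i^\kappa$ decay and the $1/\eps^\kappa$ lower bound survives.
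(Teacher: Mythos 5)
Your high-level plan (upper bound via the sub-Riemannian comparison, lower bound by restricting to a uniform-degree subarc and applying the control-affine Ball-Box estimate) matches the paper's strategy, but two of your key steps contain genuine gaps.

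\textbf{Restriction to $\Gamma'$ requires controlling cusps.} You assert that a control admissible for $\ccostcostuno(\Gamma,\eps)$ ``restricts to an admissible control for the corresponding complexity of $\Gamma'$.'' This is false as stated: the restricted trajectory runs from $q_u(t_{i_1})$ to $q_u(t_{i_2})$, the first and last interpolation points landing in $\Gamma'$, not from the endpoints $x',y'$ of $\Gamma'$. To turn it into an admissible control for $\Gamma'$ one must prepend and append cheap connections from $x'$ to $q_u(t_{i_1})$ and from $q_u(t_{i_2})$ to $y'$, and these need not have cost $O(\eps)$. In the control-affine setting $\val^\costuno(x',q_u(t_{i_1}))$ can be much larger than $\val^\costuno(q_u(t_{i_1-1}),q_u(t_{i_1}))$: this is exactly the \emph{cusp} phenomenon (Definition~\ref{defn:cusps} and the Heisenberg-with-drift example following it). The paper establishes sub-additivity (Proposition~\ref{prop:semicont}) only under a no-cusp hypothesis at the endpoints, and separately proves (Proposition~\ref{prop:cusps}) that cusps can arise only where $\drift$ becomes tangent to $\Gamma$ modulo $\Delta^{s-1}$, so one can choose $\Gamma'$ to avoid them. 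Your argument never mentions cusps and thus cannot justify the restriction step.

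\textbf{The Ball-Box bound on $z_\alpha$ is too weak.} You bound the interpolation point by
\[
|z_\alpha^{q_u(t_{i-1})}(q_u(t_i))|\le C\eps_i^\kappa + C\,\eps_i\,\maxtime^{\kappa/s}
\]
(and for $\kappa>s$ the second term is actually $C\eps_i(\eps_i+\maxtime^{1/s})^{\kappa-1}$), and claim that ``the first term dominates once $\maxtime$ is sufficiently small.'' This is backwards: $\maxtime$ is fixed and $\eps_i\to 0$, so for any fixed $\maxtime>0$ the second term dominates as $\eps_i\downarrow 0$ whenever $\kappa>1$, destroying the needed $\eps_i^\kappa$ scaling. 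The correct mechanism is different. Since $q_u(t_{i-1}), q_u(t_i)\in\Gamma$ and one can choose the continuous coordinate family by Proposition~\ref{prop:normproperty} so that $z^t_*\dot\gamma\equiv\partial_{z_\alpha}$ with $\alpha\ne\ell$, every interpolation point satisfies $z_\ell^{s_{i-1}}(q_u(t_i))=0$. In the slice $\{z_\ell\le 0\}$ the inclusion \eqref{eq:bboxdriftnegative} collapses $\Pi_T(C\eps)$ to $\bbox{C\eps}$, giving the clean bound $|z_\alpha^{s_{i-1}}(q_u(t_i))|\le C\eps_i^\kappa$ with no additive drift term. This observation, which you never use, is the essential content of Case 1 in Proposition~\ref{prop:curvelowerbound}. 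Your alternate accounting of the drift-aided motion as ``bounded by small $\maxtime$'' corresponds instead to Case 2 of that proposition (where $\drift$ is tangent to $\Gamma$ mod $\Delta^{s-1}$ and one must work with the coordinate $z_\ell$ and absorb a contribution of size $T\le\maxtime<\mathfrak T$), but conflating the two cases does not yield a valid estimate.

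A smaller issue: for the upper bound you decompose $\Gamma$ into finitely many closed subarcs of constant tangent degree. Such a finite stratification need not exist (the degree function can oscillate infinitely). It is also unnecessary: since $\kappa$ is the maximum and the degree-$\kappa$ locus is open, $T_p\Gamma\subset\Delta^\kappa(p)$ holds for every $p\in\Gamma$, and the sub-Riemannian estimate $\srsccost(\Gamma,\eps),\srscapp(\Gamma,\eps)\preccurlyeq\eps^{-\kappa}$ applies directly to $\Gamma$ (this is the content of Proposition~\ref{prop:curveupperbound}), after which Proposition~\ref{prop:srs}(iv) and (i) give the bounds for all four complexities.
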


Due to the fact that the value functions associated with the costs $\costuno$ and $\costdue$ are always smaller than the sub-Riemannian distance associated with system \eqref{cs:srs}, the $\preccurlyeq$ immediately follows from the results in \cite{Jean2003a}.

\begin{prop}
   	\label{prop:curveupperbound}
   	Let $\Gamma\subset M$ be a curve such that there exists $k\in \nat$ for which $T\Gamma\subset \Delta^k$.
   	Then,
   	\[
   		\ccostcostuno(\Gamma,\eps)\preccurlyeq \ccostcostdue(\Gamma,\eps)\preccurlyeq \frac 1 {\eps^k},\qquad \cappcostuno(\Gamma,\eps)\preccurlyeq \cappcostdue(\Gamma,\eps)\preccurlyeq \frac 1 {\eps^k}.
   	\]
\end{prop}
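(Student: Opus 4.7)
The plan is to chain the inequalities already collected in Proposition~\ref{prop:srs} and reduce the statement to a purely sub-Riemannian upper bound. First, Proposition~\ref{prop:srs}(i), which is just the observation that $\costuno(u,T)\le\costdue(u,T)$ for every admissible control, yields
\[
\ccostcostuno(\Gamma,\eps)\le\ccostcostdue(\Gamma,\eps),\qquad \cappcostuno(\Gamma,\eps)\le\cappcostdue(\Gamma,\eps).
\]
Then Proposition~\ref{prop:srs}(iv), which relies on the estimate $\val^\costdue(q,q')\le\dsrs(q,q')$ provided by Theorem~\ref{thm:cont} (apply it with $t=0$), gives
\[
\ccostcostdue(\Gamma,\eps)\le\srsccost(\Gamma,\eps),\qquad \cappcostdue(\Gamma,\eps)\le\srscapp(\Gamma,\eps).
\]
So everything reduces to showing that $\srsccost(\Gamma,\eps)\preccurlyeq 1/\eps^k$ and $\srscapp(\Gamma,\eps)\preccurlyeq 1/\eps^k$ whenever $T\Gamma\subset\distr^k$.

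For this last step I would invoke the upper-bound half of the asymptotic estimates from \cite{Jean2003a}, of which Theorem~\ref{thm:srcomplexity} recalls the equiregular form. Although Theorem~\ref{thm:srcomplexity} is stated under the sharper pointwise condition $T_q\Gamma\in\distr^k(q)\setminus\distr^{k-1}(q)$, the $\preccurlyeq$ direction only requires the weaker inclusion $T\Gamma\subset\distr^k$. If one wanted a self-contained proof, it would go via a Ball-Box covering argument: choose a continuous family of privileged coordinate systems $\{z^q\}_{q\in\Gamma}$ along $\Gamma$ as in Section~\ref{sec:normal_forms}, so that Remark~\ref{rmk:bbuniform} makes Theorem~\ref{thm:srballbox} uniform on $\Gamma$. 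The bound in Proposition~\ref{prop:privilegedproperty} applied to the arc-length parametrization of $\Gamma$ (with the weight $k$ replacing the generic weight) shows that, for each $q\in\Gamma$, the sub-arc of $\Gamma$ contained in $\bsr(q,\eps)$ has arc-length $\gtrsim \eps^k$. Hence $\Gamma$ can be covered by $N=O(1/\eps^k)$ such balls; joining the consecutive centers $q_0,\dots,q_N\in\Gamma$ by sub-Riemannian minimizers of length $\le\eps$ produces simultaneously an $\eps$-cost interpolation and an $\eps$-tubular approximation of $\Gamma$ for the system \eqref{cs:srs}, of total cost $O(\eps N)=O(1/\eps^{k-1})$, i.e.\ with both complexities $O(1/\eps^k)$.

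The argument is essentially mechanical and I do not expect any serious obstacle: the two non-trivial ingredients (Proposition~\ref{prop:srs}(iv) and the SR upper bound from \cite{Jean2003a}) are already available. The only subtlety is the gap between the hypothesis $T\Gamma\subset\distr^k$ made here and the strict tangency assumption of Theorem~\ref{thm:srcomplexity}, but this gap is immaterial for the upper-bound direction and is easily bridged by the uniform Ball-Box argument sketched above.
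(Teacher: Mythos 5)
Your proof is correct and follows essentially the same route as the paper: reduce to the cost $\costdue$ via Proposition~\ref{prop:srs}(i), then reduce to the small sub-Riemannian system via Proposition~\ref{prop:srs}(iv), and finally invoke the sub-Riemannian upper bound from \cite{Jean2003a} (the paper cites Theorem~3.14 there, which already covers the weaker hypothesis $T\Gamma\subset\distr^k$, so the gap you flag relative to Theorem~\ref{thm:srcomplexity} is handled by quoting the right statement rather than by re-deriving it). Your Ball-Box covering sketch is a correct but optional expansion of that last citation.
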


\begin{proof}
	By $(i)$ in Proposition~\ref{prop:srs}, follows that we only have to prove the upper bound for the complexities relative to the cost $\costdue$.
    Moreover, by the same proposition and \cite[Theorem~3.14]{Jean2003a}, follows immediately that $\ccostcostdue(\Gamma,\eps)$ and $ \cappcostdue(\Gamma,\eps) \preccurlyeq {\eps^{-k}}$, completing the proof of the proposition. 
\end{proof}

In order to prove $\succcurlyeq$, we will need to exploit a sub-additivity property of the complexities.
In order to have this property, it is necessary to exclude certain bad behaving points, called cusps.
Near these points, the value function behaves like the Euclidean distance does near algebraic cusps (e.g., $(0,0)$ for the curve $y=\sqrt{|x|}$ in $\real^2$).
In the sub-Riemannian context, they have been introduced in \cite{Jean2003a}.

\begin{defn}
	\label{defn:cusps}
    The point $q\in\Gamma$ is a \emph{cusp} for the cost $\cost$ if it is not an endpoint of $\Gamma$ and if, for every $c,\eta>0$, there exist two points $q_1,q_2\in \Gamma$ such that $q$ lies between $q_1$ and $q_2$, with $q_1$ before $q$ and $q_2$ after $q$  w.r.t.\ the orientation of $\Gamma$ (in particular $q\neq q_1,q_2$), $\val^\cost(q_1,q_2)\le \eta$ and $\val^\cost(q,q_2)\ge c\, \val(q_1,q_2)$.
\end{defn}

In \cite{Jean2003a} is proved that no curve has cusps in an equiregular sub-Riemannian stucture.
As the following example shows, the equiregularity alone is not enough for control-affine systems.

\begin{example}
	Consider the following vector fields on $\real^3$, with coordinates $(x,y,z)$,
	\[
		f_1(x,y,z)=\partial_x,\qquad f_2(x,y,z)=\partial_y+x\partial_z.
	\]
	Since $[f_1,f_2]=\partial_z$, $\{f_1,f_2\}$ is a bracket-generating family of vector fields. 
	The sub-Riemannian control system associated with $\{f_1,f_2\}$ on $\real^3$ corresponds to the Heisenberg group.
	
	Let now $f_0=\partial_z\subset\distr^2\setminus\distr$ be the drift, and let us consider the curve $\Gamma=\{ (t^2,0,t)\mid t\in(-\eta,\eta)\}$.
	Let $q=(0,0,0)$.
	Since $T_{q}\Gamma\notin \distr(q)$, by smoothness of $\Gamma$ and $\distr$, for $\eta$ sufficiently small $T\Gamma\subset \distr^2\setminus\distr$. 
	We now show that the point $q$ is indeed a cusp for the cost ${\costuno}$. 
	In fact, for any $\xi>0$ such that $2\xi<\maxtime$, it holds that the null control defined over time $[0,2\xi]$ steers the control affine system from $q_1=(\xi^2,0,-\xi)\in \Gamma$ to $q_2=(\xi^2,0,\xi)\in \Gamma$.
	Hence, by Proposition~\ref{prop:opt}, $V^\costuno(q_1,q_2)=0$.
	Moreover, since $q$ and $q_2$ are not on the same integral curve of the drift, $V^\costuno(q,q_2)>0=V^\costuno(q_1,q_2)$.
	This proves that $q$ is a cusp for $\costuno$.
\end{example}

The following proposition shows that cusps appear only where the drift becomes tangent to the curve at isolated points, as in the above example. 

\begin{prop}
  	\label{prop:cusps}
  	Assume that there exists $s\ge2$ such that $\drift\subset\distr^s\setminus\distr^{s-1}$.
  	Let $\Gamma\subset M$ be a curve such that $T\Gamma \subset\Delta^k\setminus\Delta^{k-1}$.
	Moreover, if $s=k$, let $\Gamma$ be such that either $\drift(p)\notin T_p\Gamma\oplus\Delta^{s-1}(p)$ for any $p\in\Gamma$ or  $\drift|_{\Gamma}\subset T\Gamma\oplus\Delta^{s-1}$.
  	Then $\Gamma$ has no cusps for the cost $\val^\costuno$.
\end{prop}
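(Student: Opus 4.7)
The strategy is to negate the cusp condition by producing matching upper and lower estimates $\val^{\costuno}(q, q_2) \leq C\, t_2^{1/m}$ and $\val^{\costuno}(q_1, q_2) \geq c\,(t_1+t_2)^{1/m}$ for a common exponent $m$ dictated by the geometric case, so that the ratio $\val^{\costuno}(q,q_2)/\val^{\costuno}(q_1,q_2)$ remains bounded as $q_1,q_2 \to q$. Fix $q \in \Gamma$ not an endpoint, parametrize $\Gamma$ as $\gamma : (-\eta, \eta) \to M$ with $\gamma(0) = q$, and write $q_1 = \gamma(-t_1)$, $q_2 = \gamma(t_2)$ for small $t_1, t_2 > 0$. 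Invoking Remark~\ref{rmk:bbdriftuniform} we choose a continuous family $\{z^p\}_{p \in \Gamma}$ of privileged coordinate systems adapted to $\drift$, so that $z^p_* \drift = \partial_{z_\ell}$ with $w_\ell = s$ at every $p \in \Gamma$. Proposition~\ref{prop:normcoordpath} then supplies uniform coordinate estimates for $z^p_j(\gamma(\tau))$ in terms of $|\tau - p|$.

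The upper bound will follow from the inner inclusion $\Xi_T(\eps/C) \subset \bdt{q}{\eps}$ of Theorem~\ref{thm:bbox}: absorbing the drift shift $\xi = \max(0, z^q_\ell(q_2)) \in [0,\maxtime]$ reduces the inclusion $q_2 \in \Xi_T(\eps/C)$ to fitting the transverse coordinates of $q_2$ into $\bbox{\eps/C}$, and the binding coordinate dictates the exponent $m$. Explicitly, $m = k$ when the curve direction is transverse to $\drift$ at the appropriate weighted level (that is, the subcases $k \neq s$ or $k = s$ with $\drift(p) \notin T_p\Gamma \oplus \Delta^{s-1}(p)$, which we label (a) and (c)). When instead $k = s$ and $\drift|_\Gamma \subset T\Gamma \oplus \Delta^{s-1}$ (case (b)), the weight-$s$ transverse coordinates vanish along $\Gamma$ at first order, and the exponent is determined instead by the leading projection of $\dot\gamma - \mu\drift$ onto some $\Delta^{s'}/\Delta^{s'-1}$ with $s' \leq s-1$.

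For the lower bound I would apply the outer inclusion $\bdt{q_1}{\eps} \subset \Pi_T(C\eps)$ at $q_1$ and isolate a coordinate $z^{q_1}_\beta$ of weight $m$ satisfying $|z^{q_1}_\beta(q_2)| \geq c\,(t_1+t_2)$: in cases (a) and (c) this is a weight-$k$ direction transverse to $\drift$, obtained by completing an adapted frame that contains a representative of $\dot\gamma$ modulo $\Delta^{k-1}$; in case (b) it encodes the leading non-trivial $\Delta^{s-1}$ component of $\dot\gamma - \mu\drift$. By the definition of $\Pi_T$, the coordinate $z_\beta$ is bounded by $\eps^m + C\eps\,\xi^{m/s}$ for some $\xi \in [0,\maxtime]$. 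Shrinking $\maxtime$ so that the cross term is dominated by $\eps^m$ forces $\eps \geq c\,(t_1+t_2)^{1/m}$ and hence $\val^{\costuno}(q_1, q_2) \geq c\,(t_1+t_2)^{1/m}$. Combined with the matching upper bound, this yields $\val^{\costuno}(q,q_2)/\val^{\costuno}(q_1,q_2) \leq (C/c)\,(t_2/(t_1+t_2))^{1/m} \leq C/c$, contradicting the cusp definition.

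The main obstacle is case (b), where $\drift$ is tangent to $\Gamma$ modulo $\Delta^{s-1}$ and the natural weight-$s$ transverse coordinate cannot control either estimate. Resolving it requires descending to a lower-weight coordinate that captures the non-trivial $\Delta^{s-1}$ correction to $\dot\gamma$---which exists precisely under the hypothesis, since otherwise $\Gamma$ would be an integral curve of $\drift$ and $\val^{\costuno}$ would vanish identically along $\Gamma$, making the cusp criterion degenerate---and then verifying that this choice of transverse coordinate can be made uniformly in the base point $p \in \Gamma$ while preserving the matching exponent $1/m$ between the upper and lower bounds.
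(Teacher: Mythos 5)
Your upper-bound strategy matches the paper's, but your lower-bound argument has a genuine gap, and your case (b) differs from the paper's.

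For the lower bound you write: "the coordinate $z_\beta$ is bounded by $\eps^m + C\eps\,\xi^{m/s}$ for some $\xi\in[0,\maxtime]$. Shrinking $\maxtime$ so that the cross term is dominated by $\eps^m$..." This step fails. The maximal control time $\maxtime$ is fixed once and for all, while the cusp criterion forces $\eps = \val^{\costuno}(q_1,q_2)$ to range over arbitrarily small values. For $m\geq 2$ the inequality $\eps\,\maxtime^{m/s}\lesssim \eps^m$ requires $\maxtime \lesssim \eps^{s(m-1)/m}\to 0$, which is impossible for a fixed $\maxtime$. Moreover, when $k>s$, the correct $\Pi_T$ constraint for a weight-$k$ coordinate is $|z_\beta|\leq C\eps\,(C\eps + \xi^{1/s})^{k-1}$, and even with $\xi\lesssim t_1+t_2$ the resulting exponent $\eps\gtrsim (t_1+t_2)^{(s-k+1)/s}$ is useless (negative or zero exponent). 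The paper avoids both problems by invoking Proposition~\ref{prop:normproperty} (not Proposition~\ref{prop:normcoordpath}, which only gives growth estimates): it constructs a continuous coordinate family simultaneously rectifying $\gamma$ to the $z_\alpha$-axis (weight $k$, with $z_\alpha^{t}(\gamma(t+\eta))=\eta$ exactly) and $\drift$ to $\partial_{z_\ell}$. This forces $z_\ell^{q_1}(\gamma(\cdot))\equiv 0$, so $q_2$ can only lie in the $\bbox{C\eps}$ part of $\Pi_T$ — the cross term disappears, and $t_1+t_2 = |z_\alpha^{q_1}(q_2)| \leq (C\eps)^k$ gives the exact lower bound immediately. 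Your proposal of "completing an adapted frame" goes in this direction, but you do not observe that the frame must be chosen to annihilate the $\ell$-coordinate along $\gamma$, which is precisely what saves the argument.

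For case (b) ($k=s$ with $\drift|_\Gamma\subset T\Gamma\oplus\Delta^{s-1}$), the paper dispatches it in one line via Proposition~\ref{prop:opt}, whereas you propose descending to a lower-weight coordinate capturing the $\Delta^{s-1}$ correction of $\dot\gamma-\mu\drift$, and you acknowledge that you have not closed the argument. Note also that in this subcase Proposition~\ref{prop:normproperty} cannot furnish a drift-adapted rectifying family, since its hypothesis $\drift(\gamma(t))\neq\dot\gamma(t)\mod\Delta^{s-1}$ fails, so the clean two-coordinate trick is unavailable; the paper's route through Proposition~\ref{prop:opt} sidesteps this entirely.
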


\begin{proof}
	If $\drift|_{\Gamma}\subset T_p\Gamma\oplus\Delta^{s-1}(p)$, the statement is a consequence of Proposition~\ref{prop:opt}.
	Hence, we assume that $\drift(p)\notin T_p\Gamma\oplus\Delta^{s-1}(p)$ for any $p\in\Gamma$.
    Let $\gamma:[0,\mathfrak T]\to M$ be a path parametrizing $\Gamma$ and consider the continuous coordinate family $\{z^t\}_{t\in[0,\mathfrak T]}$ adapted to $\drift$ given by Proposition~\ref{prop:normproperty}.
    In particular, it holds that  $z^t_*\dot\gamma(\cdot)\equiv\partial_{z_\alpha}$ for some coordinate $z_\alpha$ of weight $k$ and for any $t\in[0,\mathfrak T]$.
    We now fix any $t_0\in (0,\mathfrak T)$ and prove that $\gamma(t_0)$ is not a cusp.
    In fact, letting $\eta>0$ be sufficiently small, by Theorem~\ref{thm:bbox} and the fact that $z_\ell^{t}(\gamma(\cdot))\equiv 0$ we get
    \[
    	\begin{split}
    	    V^\costuno(\gamma(t_0),\gamma(t_0+\eta))&\le C\sum_{j=1}^n |z_j^{t_0}(\gamma(t_0+\eta))|^{\frac 1 {w_j}} =C|z_\alpha^{t_0}\gamma(t_0+\eta)|^{\frac 1 k}\\
    	    &=2C |z_\alpha^{t_0-\eta}(\gamma(t_0+\eta))|^{\frac 1 {k}}\le C V(\gamma(t_0-\eta),\gamma(t_0+\eta)).
    	\end{split}
    \]
    Letting $t_1=t_0-\eta$ and $t_2=t_0+\eta$, this proves that $V^\costuno(\gamma(t_0),\gamma(t_2))\le V^{\costuno}(\gamma(t_1),\gamma(t_2))$.
    By definition, this implies that $\gamma(t_0)$ is not a cusp, completing the proof of the proposition.
\end{proof}

Finally, we can prove the sub-additivity of the curve complexities.

\begin{prop}
	\label{prop:semicont}
    Let $\Gamma'\subset\Gamma\subset M$ be two curves.
    Then, if the endpoints of $\Gamma'$ are not cusps for the cost $\costuno$,
    there exists a constant $C>0$ such that for sufficiently small $\maxtime$ it holds
    \[	
    	\ccostcostuno(\Gamma',\eps) \preccurlyeq \ccostcostuno(\Gamma,\eps),\qquad \cappcostuno(\Gamma',\eps) \preccurlyeq \cappcostuno(\Gamma,\eps).
    \]
\end{prop}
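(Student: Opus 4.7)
The strategy is to take any $\eps$-cost interpolation $u$ of $\Gamma$, truncate its trajectory to the portion lying between the endpoints $x',y'$ of $\Gamma'$, and splice in short \emph{bridge} controls at each end connecting $x'$ (respectively $y'$) to the truncation points. The non-cusp hypothesis is precisely what keeps these bridges cheap, so the total cost of the modified interpolation exceeds that of $u$ by at most $O(\eps)$, giving
\begin{equation*}
	\ccostcostuno(\Gamma',\eps) \le \ccostcostuno(\Gamma,\eps) + C
\end{equation*}
for a constant $C$ independent of $\eps$. The $\preccurlyeq$ then follows from Proposition~\ref{prop:complexplode}, which guarantees $\ccostcostuno(\Gamma,\eps)\to\infty$ provided $\maxtime$ is small enough.

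Concretely, given an $\eps$-cost interpolation $u$ of $\Gamma$ with partition $0=t_0<\cdots<t_N=T$, pick $i_0$ (resp.\ $j_0$) to be the smallest (resp.\ largest) index so that $q_u(t_{i_0})$ lies at or past $x'$ (resp.\ $q_u(t_{j_0})$ at or before $y'$) on $\Gamma$. Assuming $i_0\le j_0$ --- the other case is handled directly --- the non-cusp condition at $x'$, applied with $q_1=q_u(t_{i_0-1})$ and $q_2=q_u(t_{i_0})$, yields
\[
	\val^\costuno(x',q_u(t_{i_0})) \le c_0\,\val^\costuno(q_u(t_{i_0-1}),q_u(t_{i_0})) \le c_0\,\eps,
\]
as soon as $\eps$ lies below the threshold $\eta_0$ attached to the non-cusp point. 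Choose any control $v_1$ from $x'$ to $q_u(t_{i_0})$ with cost $\le(1+\delta)c_0\eps$, and build an analogous bridge $v_2$ near $y'$. The concatenation of $v_1$, $u|_{[t_{i_0},t_{j_0}]}$ and $v_2$ steers $x'$ to $y'$ with cost bounded by $J(u,T)+2(1+\delta)c_0\eps$ and meets $\Gamma'$ at the interior points $q_u(t_{i_0}),\ldots,q_u(t_{j_0})$.

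To promote this control to a bona fide $\eps$-cost interpolation of $\Gamma'$, each bridge must be subdivided into finitely many sub-pieces of cost $\le\eps$ landing on $\Gamma'$. Applying the uniform Ball-Box Theorem~\ref{thm:srballbox} in a continuous family of privileged coordinates at points of $\Gamma'$ near $x'$ (cf.\ Proposition~\ref{prop:normcoordpath}), together with $T_p\Gamma'\subset\distr^\kappa(p)$, shows that the sub-arc of $\Gamma'$ of $\val^\costuno$-diameter $c_0\eps$ emanating from $x'$ has parameter length $O(\eps^\kappa)$ and thus supports a bounded number (depending only on $c_0$ and $\kappa$) of subdivision points at pairwise $\val^\costuno$-distance $\le\eps$. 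This simultaneously provides the required landing points on $\Gamma'$ and controls the total cost of the sub-bridges. The tubular approximation case is entirely parallel: one restricts $u$ to the subinterval on which its trajectory remains in $\tube(\Gamma',\eps)\subset\tube(\Gamma,\eps)$ and builds bridges whose trajectories lie in $\tube(\Gamma',\eps)$ using the same local Ball-Box estimate.

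The principal technical obstacle is the asymmetry of $\val^\costuno$ at the terminal endpoint $y'$: the non-cusp condition in Definition~\ref{defn:cusps} bounds $\val^\costuno(y',q_2)$ for $q_2$ beyond $y'$, whereas the bridge $v_2$ needs a bound on $\val^\costuno(q_u(t_{j_0}),y')$, where $y'$ is the terminal point. This forces either a symmetric strengthening of the non-cusp notion (available under the regularity assumptions used, again via Proposition~\ref{prop:normcoordpath} and Theorem~\ref{thm:bbox}), or a small re-routing past $y'$ to $q_u(t_{j_0+1})$ followed by a backward bridge; either path reduces to the same Ball-Box estimate but demands a separate verification.
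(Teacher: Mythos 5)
Your strategy matches the paper's: take an $\eps$-cost interpolation of $\Gamma$, restrict to the indices whose interpolation points land on $\Gamma'$, and bridge the two ends to $x'$ and $y'$ using the non-cusp hypothesis; the additive $O(1)$ cost is then absorbed because Proposition~\ref{prop:complexplode} makes $\ccostcostuno(\Gamma,\eps)\to\infty$. The paper's proof is very terse and simply writes $\ccostcostuno(\Gamma',\eps)\le \eps^{-1}\costuno(u|_{[t_{i_1},t_{i_2}]})+2c$ without spelling out the two points you raise, which are genuine. First, the bridges must indeed be subdivided into sub-pieces of cost $\le\eps$ with landing points on $\Gamma'$ to yield a bona fide $\eps$-cost interpolation; your Ball-Box argument (boundedly many steps of parameter length $\sim\eps^\kappa$) does this, though you implicitly use that $T_p\Gamma'\subset\Delta^\kappa(p)$ near $x'$ and $y'$, which is not in the statement of the proposition but does hold at the endpoints in the only situation where the result is invoked (inside the proof of Theorem~\ref{thm:curvegeneral}). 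Second, the asymmetry at $y'$ is real: Definition~\ref{defn:cusps} bounds $\val^\costuno(y',q_2)$ with $q_2$ past $y'$, while the bridge needs $\val^\costuno(q_u(t_{i_2}),y')$ with $q_u(t_{i_2})$ before $y'$, and the paper's displayed inequality $\val^\costuno(q_u(t_{i_2}),y')\le\val^\costuno(q_u(t_{i_2}),q_u(t_{i_2+1}))$ is not justified by a triangle inequality. However, your first suggested fix — a ``backward bridge'' from $q_u(t_{j_0+1})$ to $y'$ — does not work, because system (D) has a drift and is not time-reversible; one cannot in general steer against the orientation. Your second suggestion is the right one: the proof of Proposition~\ref{prop:cusps} (Ball-Box in the coordinates $z^{t_0-\eta}$, where $z^{\,\cdot}_\ell(\gamma(\cdot))\equiv 0$ again kills the drift coordinate) gives the mirrored estimate $\val^\costuno(\gamma(t_0-\eta),\gamma(t_0))\le C\,\val^\costuno(\gamma(t_0-\eta),\gamma(t_0+\eta))$, so a two-sided non-cusp bound is available and the bridge at $y'$ goes through.
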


\begin{proof}
	\emph{Cost interpolation complexity.}
	Let $u\in\lcont$ be a control admissible for $\ccostcostuno(\Gamma,\eps)$, and let $0=t_1<\ldots<t_N=T$ be such that $\|u\|_{\xLone([t_{i-1},t_i])}\le \eps$.
	Recall that by Theorem~\ref{thm:cont}, $V^\costuno$ is a continuous function.
	Since for small $\maxtime>0$, for any $\eps>0$ and for any $q_0\in M$ the reachable set $\mathcal R_{\maxtime} (q, \eps)$ is bounded, it holds that $\mathcal R_{\maxtime} (q, \eps) \searrow \{ e^{t\drift}(q_0)\mid t\in[0,\maxtime] \}$ as $\eps\downarrow 0$, in the sense of pointwise convergence of characteristic functions. 
	From this follows that, for $\eps$ and $\maxtime$ sufficiently small, there exist ${i_1}\neq i_2$ such that $q_u(t_{i})\in \Gamma'$ for any $i\in \{i_1,\ldots,i_2\}$ and $q_u(t_{i})\not\in\Gamma'$ for any $i\notin\{i_1,\ldots,i_2\}$.
	Since $x'$ and $y'$ are not cusps, there exists $c>0$ such that, letting $x'$ and $y'$ be the endpoints of $\Gamma'$, it holds $\val^\costuno(x',q_u(t_{i_1}))\le c\val^\costdue(q_u(t_{i_1-1},q_u(t_{i_1}))\le \eps$ and $\val^\costuno(q_u(t_{i_2}),y')\le\val^\costuno(q_u(t_{i_2}),q_u(t_{i_2+1}))\le c\eps$.
	Thus, there exists a constant $C>0$ such that
	\begin{equation*}
		\ccostcostuno(\Gamma',\eps)\le \frac {\costuno(u|_{[t_{i_1},t_{i_2}]})} \eps +2c\le C \frac {\costuno(u|_{[t_{i_1-1},t_{i_2+1}]})} \eps \le C \frac {\costuno(u)} \eps.
	\end{equation*}
	Taking the infimum over all controls $u$, admissible for $\ccostcostuno(\Gamma,\eps)$ completes the proof.

	\emph{Tubular approximation complexity.} 
	Let $u\in\lcont$ be a control admissible for $\cappcostuno(\Gamma,\eps)$.
	Then, letting $q_u$ be its trajectory such that $q_u(0)=x$, there exists two times $t_1$ and $t_2$ such that $q_u(t_1)\in\bsr(x',C\eps)$ and $q_u(t_2)\in\bsr(y',C\eps)$.
	Then, since  $V^\costuno\le \dsrs$ by Theorem~\ref{thm:cont}, the same argument as above applies.
\end{proof}

Thanks to the sub-additivity, we can prove the $\succcurlyeq$ part of Theorem~\ref{thm:curvegeneral} in the case where the curve is always tangent to the same stratum $\distr^k\setminus\distr^{k-1}$.

\begin{prop}
	\label{prop:curvelowerbound}
    Assume, that there exists $s\in\nat$ such that $\drift\subset\distr^s\setminus\distr^{s-1}$.
    Let $\Gamma\subset M$ be a curve such that there exists $k\in\nat$ for which $T_p\Gamma\in\distr^k(p)\setminus\distr^{k-1}(p)$ for any $p\in\Gamma$.
    Then, for sufficiently small time $\maxtime$, it holds
    \[
    	\ccostcostdue(\Gamma,\eps)\succcurlyeq\ccostcostuno(\Gamma,\eps)\succcurlyeq \frac 1 {\eps^k},\qquad \cappcostdue(\Gamma,\eps)\succcurlyeq\cappcostuno(\Gamma,\eps)\succcurlyeq \frac 1 {\eps^k}.
    \]
\end{prop}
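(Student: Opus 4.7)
The inequalities $\ccostcostdue(\Gamma,\eps)\succcurlyeq\ccostcostuno(\Gamma,\eps)$ and $\cappcostdue(\Gamma,\eps)\succcurlyeq\cappcostuno(\Gamma,\eps)$ are immediate from $\costuno\le\costdue$ via Proposition~\ref{prop:srs}(i). The core remaining task is to establish $\ccostcostuno(\Gamma,\eps)\succcurlyeq 1/\eps^k$; the companion bound $\cappcostuno(\Gamma,\eps)\succcurlyeq 1/\eps^k$ will then be extracted from the same estimate by partitioning a tube-trajectory into pieces of cost $\eps$, replacing each intermediate point by its sub-Riemannian projection onto $\Gamma$ (at sub-Riemannian distance $\le\eps$, hence at $\val^\costuno$-cost $\le\eps$ by Theorem~\ref{thm:cont}), to produce a $3\eps$-cost interpolation of $\Gamma$ of comparable total cost.

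The strategy for $\ccostcostuno$ is to combine the subadditivity of Proposition~\ref{prop:semicont} with an inversion of the drift ball-box Theorem~\ref{thm:bbox}, read in a coordinate family that simultaneously rectifies $\dot\gamma$ and $\drift$. Using Proposition~\ref{prop:cusps}, I would shrink $\Gamma$ to a sub-arc $\Gamma'$ with non-cusp endpoints on which Proposition~\ref{prop:normproperty} supplies a continuous family $\{z^t\}_{t\in[0,\mathfrak T]}$ adapted to $\drift$ and satisfying $z^t_*\dot\gamma\equiv\partial_{z_\alpha}$ with $w_\alpha=k$ and $\alpha\ne\ell$. Parametrizing $\Gamma'$ by $\gamma\colon[0,\mathfrak T]\to M$ gives the key identities $z^{\tau}_\alpha(\gamma(\tau+\eta))=\eta$, $z^{\tau}_j(\gamma(\tau+\eta))=0$ for $j\ne\alpha$, and in particular $z^\tau_\ell(\gamma(\tau+\eta))\equiv 0$.

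Now, given an $\eps$-cost interpolation $u$ with partition $0=t_0<\cdots<t_N=T\le\maxtime$ and $q_u(t_i)=\gamma(\tau_i)$, apply Theorem~\ref{thm:bbox} (uniformly via Remark~\ref{rmk:bbdriftuniform}) in the coordinates $z^{\tau_{i-1}}$ at $\gamma(\tau_{i-1})$. Since $z^{\tau_{i-1}}_\ell(\gamma(\tau_i))=0$, the $\xi>0$ branches of $\Pi_{\maxtime}(C\eps)$ (which require $z_\ell\ge\xi>0$) are excluded, forcing $\gamma(\tau_i)\in\bbox{C\eps}$ and hence $|\eta_i|:=|\tau_i-\tau_{i-1}|\le(C\eps)^k$. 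Reading the same inclusion in reverse, $\gamma(\tau_i)\notin\Pi_{\maxtime}(Cr)$ implies $\val^\costuno(\gamma(\tau_{i-1}),\gamma(\tau_i))>r$, which yields
\[
\val^\costuno(\gamma(\tau_{i-1}),\gamma(\tau_i))\ge c\,|\eta_i|^{1/k}.
\]
Since $\costuno(u|_{[t_{i-1},t_i]},t_i-t_{i-1})\ge\val^\costuno(\gamma(\tau_{i-1}),\gamma(\tau_i))$, combining $|\eta_i|^{1/k}\ge|\eta_i|/(C\eps)^{k-1}$ with the telescope $\sum_i|\eta_i|\ge|\sum_i\eta_i|=\mathfrak T$ produces $\costuno(u,T)\ge c\mathfrak T/\eps^{k-1}$, whence $\ccostcostuno(\Gamma',\eps)\ge c\mathfrak T/\eps^k$, and subadditivity lifts this to $\Gamma$.

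The principal obstacle I anticipate is the degenerate regime $k=s$ with $\dot\gamma$ parallel to $\drift$ modulo $\distr^{s-1}$: there Proposition~\ref{prop:normproperty} no longer produces $\alpha\ne\ell$, the identity $z^\tau_\ell\equiv 0$ along $\gamma$ is lost, the $\xi>0$ branches of $\Pi_{\maxtime}$ are no longer excluded, and the ball-box step collapses — mirroring the fact that on a pure drift orbit the complexity genuinely vanishes. Handling this requires a careful application of Proposition~\ref{prop:cusps}'s dichotomy to decompose $\Gamma$ into drift-transverse sub-arcs (on which the argument above applies verbatim) and drift-tangent sub-arcs (on which Proposition~\ref{prop:opt} identifies the null control as optimal), using subadditivity to piece everything together.
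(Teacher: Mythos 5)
Your treatment of the drift-transverse regime is essentially the paper's Case~1 argument (rectifying both $\dot\gamma$ and $\drift$ via Proposition~\ref{prop:normproperty}, reading $\eta_i=\tau_i-\tau_{i-1}$ off the $z_\ell=0$ slice of $\Pi_T(C\eps)$ using Theorem~\ref{thm:bbox}, then telescoping), and your reduction of $\cappcostuno$ to $\ccostcostuno$ by inserting sub-Riemannian detours back to $\Gamma$ at the $\eps$-cost breakpoints is a valid and in fact more explicit route than the paper's terse ``the same arguments apply.''

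The genuine gap is in your sketch for the degenerate regime $k=s$ with $\drift\in\distr^{s-1}\oplus T\Gamma$. You propose to dispatch the drift-tangent sub-arcs via Proposition~\ref{prop:opt}, on the grounds that ``on a pure drift orbit the complexity genuinely vanishes.'' But a curve satisfying $\drift(p)\in\distr^{s-1}(p)\oplus T_p\Gamma$ at every $p$ is \emph{not} a drift orbit: tangency modulo $\distr^{s-1}$ does not mean $\Gamma=\{e^{t\drift}x\}$, and the null control will leave $\Gamma$. Proposition~\ref{prop:opt} concerns endpoints $q_0$ and $e^{t\drift}q_0$ on the \emph{same} drift integral curve and says nothing about interpolating a merely drift-tangent $\Gamma$, whose complexity is still $\asymp\eps^{-s}$. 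Worse, your ``decompose into drift-transverse and drift-tangent sub-arcs'' plan has nothing to decompose in the case where $\drift(p)\in\distr^{s-1}(p)\oplus T_p\Gamma$ holds at \emph{every} $p\in\Gamma$ (the paper's Case~2), so you would conclude a vanishing lower bound, which is false. The correct quantitative argument there is of a different flavor and is the point at which the hypothesis that $\maxtime$ be small becomes essential: one picks a continuous coordinate family adapted to $\drift$ (now with $\alpha=\ell$), notes $(z_\ell^t)_*\dot\gamma\neq 0$, and from Theorem~\ref{thm:bbox} derives the inequality
\[
\mathfrak T \;\le\; C\bigl(\ccostcostuno(\Gamma,\eps)+\eta\bigr)\eps^s + T
\]
with $T\le\maxtime$: the drift can advance you along $\Gamma$ ``for free'' only by a total time $\le\maxtime$, and the remaining length $\mathfrak T-\maxtime>0$ must be paid for at rate $\eps^s$ per piece of cost $\eps$. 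Your proposal does not contain this estimate, nor any substitute for it, so the $k=s$ drift-tangent case is left unproved.
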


\begin{proof}
    By Proposition~\ref{prop:srs}, $\ccostcostdue(\Gamma,\eps)\succcurlyeq\ccostcostuno(\Gamma,\eps)$ and $\cappcostdue(\Gamma,\eps)\succcurlyeq\cappcostuno(\Gamma,\eps)$.
    We will only prove that $\ccostcostuno(\Gamma,\eps)\succcurlyeq \eps^{-k}$, since the same arguments apply to $\cappcostuno(\Gamma,\eps)$.

    Let $\gamma:[0,\mathfrak T]\to M$ be a path parametrizing $\Gamma$.
    We will distinguish three cases.

	\begin{description}
	    \item[Case 1 $\drift(p)\notin \distr^{s-1}(p)\oplus T_p\Gamma$ for any $p\in\Gamma$]
		Fix $\eta>0$ and consider a control $u\in \lcont$, admissible for $\ccost(\Gamma,\eps)$ such that
		\begin{equation}\label{eq:pezzi}
			\frac{\|u\|_{\xLone}} \eps \le \ccost(\Gamma,\eps)+\eta.
		\end{equation}
		Let $u_i=u|_{[t_{i-1},t_i]}$, $i=1,\ldots, N=\left\lceil \frac{\|u\|_{\xLone}}{\eps} \right\rceil$ to be such that $\|u_i\|_{\xLone}=\eps$ for any $1\le i < N$, $\|u_N\|_{\xLone}\le \eps$.
		Moreover, let $s_i$ be the times such that $\gamma(s_i)=q_u(t_i)$.
		
		By \eqref{eq:pezzi}, it holds $N\le \lceil\ccost(\Gamma,\eps)+\eta+1\rceil$.
		However, we can assume w.l.o.g.\ that $N\le \lceil\ccost(\Gamma,\eps)+\eta\rceil$. 
		In fact, $N>\lceil\ccost(\Gamma,\eps)+\eta\rceil$ only if  $\|u_N\|<\eps$.
		In this case we can simply restrict ourselves to compute $\ccost(\tilde\Gamma,\eps)$ where $\tilde\Gamma$ is the segment of $\Gamma$ comprised between $x$ and $q_u(t_{N-1})$. 
		Indeed, by Propositions~\ref{prop:cusps} and \ref{prop:semicont}, it follows that $\ccost(\tilde\Gamma,\eps)\preccurlyeq \ccost(\Gamma,\eps)$.
		
		We now assume that $\eps$ and $\maxtime$ are sufficiently small, in order to satisfy the hypotheses of Theorem~\ref{thm:bbox} at any point of $\Gamma$. 
		Moreover, let $\{z^t\}_{t\in[0,\mathfrak T]}$ be the continuous coordinate family for $\Gamma$ adapted to $\drift$ given by Proposition~\ref{prop:normproperty}.
		Then, it holds 
		\begin{equation}\label{eq:varcompl}
				\mathfrak T = \sum_{i=1}^N (s_i-s_{i-1}) = \sum_{i=1}^N|z_\alpha^{s_{i-1}}(\gamma(s_i))| = \sum_{i=1}^N|z_\alpha^{s_{i-1}}(q_u(t_i))|\le C(\ccost(\Gamma,\eps)+\eta) \eps^k. \\
		\end{equation}
		Here, in the last inequality we applied Theorem~\ref{thm:bbox} and the fact that $z_\ell^{s_i-1}(q_u(t_i))=0$ by Proposition~\ref{prop:normproperty}.
		Finally, letting $\eta\downarrow 0$ in \eqref{eq:varcompl}, we get that for any $\eps$ sufficiently small it holds $\ccost(\Gamma,\eps)\ge C\mathfrak T \, \eps^{-k}$.
		This completes the proof in this case.

		\item[Case 2 $s = k$ and $\drift(p)\in \distr^{s-1}(p)\oplus T_p\Gamma$ for any $p\in\Gamma$]
		Let $\{z^t\}_{t\in[0,\mathfrak T]}$ be a continuous coordinate family for $\gamma$ adapted to $\drift$.
		In this case, since $(z^t_\ell)_*\drift=1$, it holds that $(z^t_\ell)_*\dot\gamma(\cdot)\neq 0$.
		Hence, there exist $C_1,\,C_2>0$ such that for any $t,\xi\in[0,T]$
		\begin{gather}
			\label{eq:zellpos} C_1(t-\xi)\le z_\ell^t(\gamma(\xi)) \le C_2 (t-\xi), \quad \text{if } (z^t_\ell)_*\dot\gamma(\cdot)>0;\\
			\label{eq:zellneg} C_1(t-\xi)\le -z_\ell^t(\gamma(\xi)) \le C_2 (t-\xi), \quad \text{if } (z^t_\ell)_*\dot\gamma(\cdot)<0.
		\end{gather}
		If \eqref{eq:zellneg} holds, then we can proceed as in Case~1 with $\alpha=\ell$. In fact, $|z_\ell^{s_{i-1}}(q_u(t_i)|\le C\eps^s$ by Theorem~\ref{thm:bbox}.
		On the other hand, if \eqref{eq:zellpos} holds, by applying Theorem~\ref{thm:bbox} we get
	    \[
	    	\begin{split}
	    		\mathfrak T &= \sum_{i=1}^N (s_i-s_{i-1}) \le\frac {1}{C_1} \sum_{i=1}^N | z_\ell^{s_{i-1}}(\gamma(s_i))|= \frac {1}{C_1}\sum_{i=1}^N |z_\ell^{s_{i-1}}(q_u(t_i))| \\
	    			&\le \frac {1}{C_1} \sum_{i=1}^N (C\eps^s+t_i-t_{i-1}) \le C\big(\ccostcostuno(\Gamma,\eps)+\eta \big)\eps^{s} + T.
	    	\end{split}
	    \]
	    By taking $\maxtime$ sufficiently small, it holds $T\le \maxtime<\mathfrak T$.
	    Then, letting $\eta\downarrow 0$ this proves that $\ccostcostuno(\Gamma,\eps)\ge ((T-\maxtime)/C ) \eps^{-s}\succcurlyeq \eps^{-s}$.
	    This completes the proof of this case.

	    \item[Case 3 $s = k$ and $\drift(p)\in \distr^{s-1}(p)\oplus T_p\Gamma$ for some $p\in\Gamma$]
	    In this case, there exists an open interval $(t_1,t_2)\subset [0,\mathfrak T]$ such that $\drift(\gamma(t))\neq\dot\gamma(t) \mod\Delta^{s-1}(\gamma(t))$ for any $t\in (t_1,t_2)$.
	    Thus, $\Gamma'=\gamma((t_1,t_2))$, satisfies the assumption of Case 1 and hence $\ccostcostuno(\Gamma',\eps)\succcurlyeq\eps^{-k}$.
	    Moreover, by Proposition~\ref{prop:cusps}, we can assume that $\gamma(t_1)$ and $\gamma(t_2)$ are not cusps.
	    Then, by Proposition~\ref{prop:semicont} we get
	    \[
	    	\frac 1 {\eps^k}\preccurlyeq \ccostcostuno(\Gamma',\eps)\preccurlyeq \ccostcostuno(\Gamma,\eps),
	    \]
	    completing the proof of the proposition.
	\end{description}
	\end{proof}

Finally, we are in a condition to prove the main theorem of this section.

\begin{proof}[Proof of Theorem~\ref{thm:curvegeneral}]
    Since it is clear that $T\Gamma\subset \Delta^\kappa$, the upper bound follows by Proposition~\ref{prop:curveupperbound}.
    Moreover, by Proposition~\ref{prop:srs} it suffices to prove that $\ccostcostuno(\Gamma,\eps)$ and $\cappcostuno(\Gamma,\eps)\succcurlyeq \eps^{-\kappa}$.
    Since the arguments are analogous, we only prove this for $\ccostcostuno$.

    By smoothness of $\Gamma$, the set $A=\{p\in\Gamma \mid T_p\Gamma\in\Delta^\kappa(p)\setminus\Delta^{\kappa-1}(p)\}$ has non-empty interior.
    Let then $\Gamma'\subset A$ be a non-trivial curve such that either $\drift(p)\notin T_p\Gamma'\oplus\Delta^{s-1}(p)$ for any $p\in\Gamma'$ or that $\drift|_{\Gamma'}\subset T\Gamma'\oplus\Delta^{s-1}$.
    Then, since by Proposition~\ref{prop:cusps} we can choose $\Gamma'$ such that it does not contain any cusps, applying Proposition~\ref{prop:semicont} yields that $\ccostcostuno(\Gamma',\eps)\preccurlyeq \ccostcostuno(\Gamma,\eps)$.
    Finally, the result follows from the fact that, by Proposition~\ref{prop:curvelowerbound}, it holds $\ccostcostuno(\Gamma',\eps)\succcurlyeq \eps^{-\kappa}$.
\end{proof}


\section{Complexity of paths} 
\label{sec:complexity_of_paths_under_h_s_}

In this section we will prove the statement on paths of Theorems~\ref{thm:srtimecomplexity} and \ref{thm:driftcomplexity}.

Recall the definition of $\delta$-time interpolation given in Section~\ref{sub:complexities}, and define the following function of a path $\gamma:[0,T]\to M$ and a time-step $\delta>0$
\begin{equation*}
	\ctimeaux(\gamma,\delta) = \delta \inf \big\{ J(u,T) |\: u \text{ is a }\delta\text{-time interpolation of }\gamma \big\}.
\end{equation*}
Controls admissible for the above infimum define trajectories touching $\gamma$ at intervals of time of length at most $\delta$.
Then, function $\ctimeaux(\gamma,\delta)$ measures the minimal average cost on each of these intervals.
It is possible to express the interpolation by time complexity through $\ctimeaux$.
Namely,
\begin{equation}
	\label{eq:interpbytime}
	\ctime(\gamma,\eps) = \inf_{\delta\le\delta_0} \left\{ \frac T \delta \bigg|\: \ctimeaux(\gamma,\delta)\le \eps \right\} = \sup_{\delta\le \delta_0} \left\{ \frac{T}{\delta} \bigg|\: \ctimeaux(\gamma,\delta')\ge\eps \text{ for any } \delta'\ge \delta \right\}.
\end{equation}
From \eqref{eq:interpbytime} follows immediately that, for any $k\in\nat$,
\begin{equation}
	\label{eq:complaux}
	\ctime(\gamma,\eps)\preccurlyeq \eps^{-k}  \iff  \ctimeaux(\gamma,\delta) \preccurlyeq \delta^{\frac{1}{k}} \quad\text{ and }\quad \ctime(\gamma,\eps)\succcurlyeq \eps^{-k}  \iff  \ctimeaux(\gamma,\delta) \succcurlyeq \delta^{\frac{1}{k}}.
\end{equation}
Exploiting this fact, we are able to prove Theorem~\ref{thm:srtimecomplexity}.

\begin{proof}[Proof of Theorem~\ref{thm:srtimecomplexity}]
	\label{pf:srtimecomplexity}
	Let $\{z^t\}_{t\in[0,T]}$ to be the continuous family of coordinates for $\gamma$  given by Proposition~\ref{prop:normproperty}.
	We start by proving that $\ctimeaux(\gamma,\delta)\preccurlyeq\delta^{\frac 1 k}$ which, by \eqref{eq:complaux}, will imply $\srsctime(\gamma,\eps)\preccurlyeq\eps^{-k}$.
	Fix any partition $0=t_0<t_1<\ldots<t_N=T$ such that $\delta/2 \le t_i-t_{i-1}\le \delta$.
	If $\delta$ is sufficiently small, from Theorem~\ref{thm:srballbox} follows that there exists a constant $C>0$ such that for any $i=0,\ldots,N$ in the coordinate system $z^{t_i}$ it holds that $\text{Box}({\gamma(t_i),C\delta^{\frac 1 k}}) \subset \bsr(\gamma(t_i),\delta^{\frac 1 k})$. 
	Hence, since $z_\alpha^{t_{i-1}}(\gamma(t_i))=t_i-t_{i-1}$, that $z_j^{t_{i-1}}(\gamma(t_i))=0$ for any $j\neq \alpha$, and that $N\le \lceil 2T/\delta\rceil\le C T/\delta$, we get
	\begin{equation*}
		\ctimeaux(\gamma,\delta)\le \delta\sum_{i=1}^N \dsrs(\gamma(t_{i-1}),\gamma(t_i)) \le C\delta \sum_{i=1}^N \sum_{j=1}^n |z_j^{t_{i-1}}(\gamma(t_i))|^{\frac 1 {w_j}} =C\delta \sum_{i=1}^N  (t_i-t_{i-1})^{\frac 1 {k}} \leq CT\delta^{\frac 1 k}.
	\end{equation*}
	This proves completes the proof of the first part of the Theorem.
	
	Conversely, to prove that $\ctime(\gamma,\eps)\preccurlyeq \eps^{-k}$ we need to show that $\ctimeaux(\gamma,\delta)\succcurlyeq\delta^{\frac 1 k}$.
	To this aim, let  $\eta>0$ and $u\in \xLone$ be a control admissible for $\ctimeaux(\gamma,\delta)$ such that 
	\[
		\|u\|_{\xLone([t_{i-1},t_i])}\le \frac{ \ctimeaux(\gamma,\delta) } \delta+ \eta.
	\]
	Let $0=t_0<t_1<\ldots<t_N=T$ be times such that $q_u(t_i)=\gamma(t_i)$, $i=0,\ldots,N$, $0<t_i-t_{i-1}\le \delta$.
	Moreover, let $u_i\in \xLone([t_{i-1},t_{i}])$ be the restriction of $u$ between $t_{i-1}$ and $t_i$. 
	Observe that, up to removing some $t_i$'s, we can assume that $t_i-t_{i-1}\in \left(\frac \delta 2, \frac 3 2 \delta  \right]$.
	This implies that $\left\lceil  {2T}/( {3\delta}) \right\rceil\le N\le \left\lceil  {2T}/ \delta \right\rceil$.
		
	To complete the proof it suffices to show that $\|u_i\|_{\xLone([t_{i-1},t_i])}\ge C \delta^{\frac 1 {k}}$. 
	In fact, for any $\eta>0$, this yields
	\[
	\begin{split}
		\frac{\ctimeaux(\gamma,\delta)} \delta &\ge \|u\|_{\lcont}-\eta =\sum_{i=1}^N \|u_i\|_{\xLone([t_{i-1},t_i])} -\eta  \ge C\, \sum_{i=1}^N  \delta ^{ \frac 1 {k} } -\eta 
		\ge C\,   \frac{2T} {3\delta} \delta ^{ \frac 1 {k} } -\eta.
	\end{split}
	\]
	Letting $\eta\downarrow 0$, this will prove that $\ctimeaux(\gamma,\delta)\succcurlyeq \delta^{\frac 1 k}$, completing the proof.
	
	Observe that, by Theorem~\ref{thm:srballbox}, for any $i=1,\ldots,N$ in the coordinate system $z^{t_{i-1}}$ it holds  $\bsr(\gamma(t_{i}),\|u_i\|_{\xLone([t_{i-1},t_i])})\subset \bbox{\gamma(t_{i}),C\|u_i\|_{\xLone([t_{i-1},t_i])}}$.
	Since $z_\alpha^{t_{i-1}}(t_i)=t_i-t_{i-1}$, this implies that
	\[
		\frac \delta 2 \le  t_{i}-t_{i-1} =  |z^{t_{i-1}}_\alpha(\gamma(t_{i}))| \le C \, \|u_i\|_{\xLone([t_{i-1},t_i])}^{k},
	\]
	proving the claim and the theorem.
\end{proof}

The rest of the section will be devoted to the proof of the statement on paths of Theorem~\ref{thm:driftcomplexity}.
Namely, we will prove the following.

\begin{thm}
	\label{thm:pathgeneral}
	Assume that there exists $s\ge 2$ such that $\drift\subset\distr^s\setminus\distr^{s-1}$.
	let $\gamma:[0,T]\to M$ be a path such that $\drift(\gamma(t))\neq \dot\gamma(t) \mod\Delta^{s-1}(\gamma(t))$ for any $t\in [0,T]$ and define
    $\kappa=\max\{ k \colon\: \gamma(t)\in\Delta^k(\gamma(t))\setminus\Delta^{k-1}(\gamma(t)) $ for any $t$ in an open subset of $[0,T]\}$. 
	Then, it holds
	\[
		\ctimecostuno(\gamma,\eps)\asymp \ctimecostdue(\gamma,\eps)\asymp \cneigcostuno(\gamma,\eps) \asymp \cneigcostdue(\gamma,\eps) \asymp \frac 1 {\eps^{\max\{\kappa,s\}}},
	\]
	where the asymptotic equivalences regarding the interpolation by time complexity are true only when $\maxtimectime$, i.e., the maximal time-step in $\ctime(\gamma,\eps)$, is sufficiently small.
\end{thm}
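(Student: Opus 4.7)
\textbf{Proof plan for Theorem~\ref{thm:pathgeneral}.} The strategy is to show the single asymptotic $\cneig^{\costuno}, \ctime^{\costuno} \asymp \eps^{-\max\{\kappa,s\}}$ and then upgrade to $\costdue$. Proposition~\ref{prop:srs}(i) already gives $\cneig^{\costuno}\le\cneig^{\costdue}$ and $\ctime^{\costuno}\le\ctime^{\costdue}$, so one direction is immediate. For the reverse, I would use Proposition~\ref{prop:eq}: on trajectories of cost $\le\eps_0$ (which is automatic for admissible controls in these complexities once $\eps$ or $\delta_0$ is small), $\costdue(u,T)\le C\,\costuno(u,T)$, so $\cneig^{\costdue}\preccurlyeq\cneig^{\costuno}$ and similarly for $\ctime$. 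Hence it suffices to prove the asymptotic for $\costuno$. For $\ctime$ I will work with the auxiliary quantity $\ctimeaux$ and exploit the equivalence~\eqref{eq:complaux}, reducing the statement to $\ctimeaux(\gamma,\delta)\asymp\delta^{1/\max\{\kappa,s\}}$.

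For the \emph{upper bound} $\ctime(\gamma,\eps),\,\cneig(\gamma,\eps)\preccurlyeq\eps^{-\max\{\kappa,s\}}$, I would use a continuous coordinate family $\{z^t\}$ for $\gamma$ adapted to $\drift$ (Section~\ref{sec:normal_forms}) together with the control-affine Ball-Box Theorem~\ref{thm:bbox} (uniform on $\gamma$, by Remark~\ref{rmk:bbdriftuniform}). Partition $[0,T]$ into $N\sim T/\delta$ intervals of length $\sim\delta$. On each piece the displacement from $\gamma(t_{i-1})$ to $\gamma(t_i)$ in the coordinate system $z^{t_{i-1}}$ has $z_\ell$-component $\sim\delta$ (the drift direction) and transversal components of size $\delta^{w_j/\kappa}$ for $w_j>\kappa$ by Proposition~\ref{prop:normcoordpath}. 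The reachable set $\Pi_\delta(\eps)$ from Theorem~\ref{thm:bbox} contains such a displacement as soon as $\eps\ge C\,\delta^{1/\max\{\kappa,s\}}$: the drift accounts for free motion in the $z_\ell$-direction, while the remaining transversal corrections (including possible mismatch in weight-$s$ direction forced by Lemma~\ref{lem:partizione}) are handled by an SR control of cost $\delta^{1/\max\{\kappa,s\}}$. Concatenating, the total cost is $\preccurlyeq N\cdot\delta^{1/\max\{\kappa,s\}}=\delta^{1/\max\{\kappa,s\}-1}$, giving both bounds. For $\cneig$, the same construction yields a trajectory that between interpolation nodes stays inside $\bsr(\gamma(t),\eps)$ by the Ball-Box Theorem~\ref{thm:srballbox} for the small sub-Riemannian system.

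For the \emph{lower bound} $\ctime,\,\cneig\succcurlyeq\eps^{-\max\{\kappa,s\}}$, I would argue separately that each of the two terms $\eps^{-\kappa}$ and $\eps^{-s}$ is attained. The $\eps^{-\kappa}$ part: restricting to the open subinterval where $\dot\gamma\in\Delta^\kappa\setminus\Delta^{\kappa-1}$, the argument of the proof of Theorem~\ref{thm:srtimecomplexity} (adapted by using the adapted continuous coordinate family of Proposition~\ref{prop:normproperty}) shows that per interpolation step of length $\le\delta$ the trajectory must cover displacement $\sim\delta$ in a weight-$\kappa$ direction transverse to $\drift$, forcing cost $\succcurlyeq\delta^{1/\kappa}$. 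The $\eps^{-s}$ part uses the hypothesis $\drift(\gamma(t))\neq\dot\gamma(t)\mod\Delta^{s-1}$: decompose $[0,T]$ as $E_1\cup E_2\cup E_3$ via Lemma~\ref{lem:partizione} and pick a subinterval lying entirely in one of these sets (of positive measure, uniform in $\delta$). On $E_1\cup E_3$ the $z_\ell$-component of $\gamma(t_i)-\gamma(t_{i-1})$ differs from $\delta$ by a quantity $\succcurlyeq\delta$, while on $E_2$ the same holds for the $z_\alpha$-component of weight $s$; in either case, the control-affine Ball-Box~\ref{thm:bbox} forces the SR-part of the control to have $\xLone$-norm $\succcurlyeq\delta^{1/s}$ per step. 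Summing over the $\sim T/\delta$ steps yields $\ctimeaux\succcurlyeq\delta^{1/s}$, i.e., $\ctime\succcurlyeq\eps^{-s}$. The same displacement argument applied at time $t$ for the constraint $q_u(t)\in\bsr(\gamma(t),\eps)$ delivers the corresponding lower bound for $\cneig$.

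The hardest step is the $\eps^{-s}$ half of the lower bound, because it requires quantifying precisely \emph{how} the drift mismatch obstructs the controller: one must convert the qualitative statement of Lemma~\ref{lem:partizione} into a uniform quantitative estimate on how far $\gamma(t_i)$ sits from the free drift orbit of $\gamma(t_{i-1})$ in the appropriate weight-$s$ coordinate, and then use the non-symmetric asymptotics in $\Pi_T(\eps)$ of Theorem~\ref{thm:bbox} (whose bounds along $z_\ell$ are one-sided, see~\eqref{eq:bboxdriftnegative}) to deduce an SR-cost lower bound per step. The one-sidedness is exactly why $\maxtimectime$ must be taken small: otherwise an admissible trajectory could use large free-drift segments to realize displacements that Theorem~\ref{thm:bbox} would otherwise forbid, as illustrated in Remark~\ref{rmk:timeboundmot}. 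Once this quantitative control is obtained, the combination $\ctime\succcurlyeq\eps^{-\kappa}+\eps^{-s}\asymp\eps^{-\max\{\kappa,s\}}$ follows, completing the proof.
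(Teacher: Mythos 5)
Your plan reproduces the paper's high-level strategy (upper bound via a drift-adapted continuous coordinate family and the control-affine Ball-Box Theorem~\ref{thm:bbox}, lower bound via drift mismatch and per-step estimates), but there are two genuine gaps.

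First, the reduction to $\costuno$ via Proposition~\ref{prop:eq} does not work as stated. You claim admissible controls have $\costuno\le\eps_0$, ``automatic once $\eps$ or $\delta_0$ is small,'' but near-optimal controls for $\cneig$ or $\ctimeaux$ have total $\costuno$-cost of order $\eps^{1-\max\{\kappa,s\}}\to\infty$ as $\eps\downarrow0$; only the individual pieces have small cost. Proposition~\ref{prop:eq} is stated for a fixed pair of endpoints and a single control of bounded total cost, so it cannot be applied globally, and a per-piece application would need uniformity along $\gamma$ that the proposition as stated does not provide (one should instead invoke Lemma~\ref{lem:stimatau} piece by piece, whose constants are uniform on compact sets). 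The paper sidesteps this entirely: Proposition~\ref{prop:pathupperbound} proves the upper bound directly for $\costdue$ (via $\val^{\costdue}_{\text{TD}}\le\dsrs$ and the time-dependent reformulation), Proposition~\ref{prop:pathlowerbound} proves the lower bound directly for $\costuno$, and Proposition~\ref{prop:srs}(i) closes the sandwich.

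Second, both of your lower-bound arguments (restricting to the open subinterval where $\dot\gamma\in\Delta^\kappa\setminus\Delta^{\kappa-1}$, and picking a subinterval lying entirely in one of $E_1,E_2,E_3$) yield estimates only on a proper subinterval of $[0,T]$. To conclude $\ctimeaux(\gamma,\delta)\succcurlyeq\delta^{1/\max\{\kappa,s\}}$ and $\cneig(\gamma,\eps)\succcurlyeq\eps^{-\max\{\kappa,s\}}$ for the whole path you need a sub-additivity statement saying that the restricted complexity is $\preccurlyeq$ the full one; this is precisely Proposition~\ref{prop:pathsemicont}, which your plan uses implicitly but never states. Unlike the curve case (Proposition~\ref{prop:semicont}) it does hold without any cusp hypothesis, but it is not free and must be proved. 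Once these two points are repaired, the rest of your sketch --- in particular the role of the one-sided $z_\ell$-asymmetry of $\Pi_T$ in Theorem~\ref{thm:bbox} and why it forces $\maxtimectime$ to be small --- aligns with the paper's Propositions~\ref{prop:pathupperbound} and \ref{prop:pathlowerbound}.
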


Differently to what happened for curves, the $\preccurlyeq$ part does not immediately follow from the estimates of sub-Riemannian complexities, but requires additional care.
It is contained in the following proposition.

\begin{prop}
   	\label{prop:pathupperbound}
   	Assume that there exists $s\in\nat$ such that $\drift\subset\distr^s\setminus\distr^{s-1}$.
   	Let $\gamma:[0,T]\to M$ be a path such that $\dot\gamma(t)\in \Delta^k(\gamma(t))$.
   	Then, it holds
   	\begin{equation}
   		\label{eq:compltimek}
   		\ctimecostuno(\gamma,\eps)\preccurlyeq \ctimecostdue(\gamma,\eps)\preccurlyeq \frac 1 {\eps^{\max\{s,k\}}},
   	   		\qquad 
   	   		\cneigcostuno(\Gamma,\eps)\preccurlyeq \cneigcostdue(\Gamma,\eps)\preccurlyeq \frac 1 {\eps^{\max\{s,k\}}}.
   	\end{equation}
\end{prop}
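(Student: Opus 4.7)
The plan is to first reduce to the $\costdue$ case (by Proposition~\ref{prop:srs}(i), $\ctimecostuno\le\ctimecostdue$ and $\cneigcostuno\le\cneigcostdue$) and then to construct a suitable admissible control via a partition-based scheme. Set $m=\max\{s,k\}$. I would fix a continuous coordinate family $\{z^t\}_{t\in[0,T]}$ for $\gamma$ adapted to $\drift$ (Definition~\ref{def:normadapted}), so that in every chart $z^t$ one has $\drift=\partial_{z_\ell}$ with $w_\ell=s$, and the uniform coordinate bounds of Proposition~\ref{prop:normcoordpath} are available on all of $[0,T]$.

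The heart of the argument would be the following Ball--Box-type estimate: there exist $C,\delta_0>0$ such that
\[
\dsrs\bigl(e^{\delta\drift}\gamma(t),\,\gamma(t+\delta)\bigr)\le C\,\delta^{1/m},
\qquad t\in[0,T-\delta],\ \delta\in(0,\delta_0].
\]
In chart $z^t$, the point $e^{\delta\drift}\gamma(t)$ sits at $\delta\,e_\ell$, and by Proposition~\ref{prop:normcoordpath} the coordinates of $\gamma(t+\delta)$ satisfy $|z_j|\le C\delta$ for $w_j\le k$ and $|z_j|\le C\delta^{w_j/k}$ for $w_j>k$. Feeding the resulting coordinate-wise differences into the pseudo-norm from Theorem~\ref{thm:srballbox} and checking the cases $w_j\le k$ with $j\ne\ell$, $w_j>k$ with $j\ne\ell$, and $j=\ell$ of weight $s$ separately, each contribution is $\le C\delta^{1/m}$. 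This case analysis — essentially the observation that $\delta^{1/m}=\max(\delta^{1/s},\delta^{1/k})$ so that every term can be matched against $\delta^{1/m}$ — is the main obstacle.

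Given the estimate, I would partition $[0,T]$ into $N=\lceil T/\delta\rceil$ pieces of length at most $\delta$ and define a control piecewise. On each $[t_{i-1},t_i]$ take $u\equiv0$ on $[t_{i-1},t_i-\eta]$ (so the trajectory flows along the drift to $e^{(t_i-t_{i-1}-\eta)\drift}\gamma(t_{i-1})$) and, on the short residual window $[t_i-\eta,t_i]$, use the near-minimizer furnished by Proposition~\ref{prop:costole} steering to $\gamma(t_i)$; its $\costuno$-cost is at most the SR distance between its endpoints, which the key estimate bounds by $C\delta^{1/m}$. Summing over $i$ gives $\costdue(u,T)\le T+CT\delta^{1/m-1}$. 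For the interpolation-by-time complexity this yields $\ctimeauxcostdue(\gamma,\delta)\le\delta\,\costdue(u,T)\preccurlyeq\delta^{1/m}$, so by \eqref{eq:complaux} $\ctimecostdue(\gamma,\eps)\preccurlyeq\eps^{-m}$. Taking instead $\delta=c\,\eps^m$ in the same construction yields $\costdue(u,T)\preccurlyeq\eps^{1-m}$ (for $\eps$ small, the hypothesis $m\ge2$ absorbs the additive~$T$), so dividing by $\eps$ gives $\cneigcostdue(\gamma,\eps)\preccurlyeq\eps^{-m}$.

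For $\cneigcostdue$ I would still need to verify the tube condition $q_u(t)\in\bsr(\gamma(t),C\eps)$, after which a precision rescaling finishes the proof. On each drift-flow portion the same coordinate comparison used for the key estimate, applied now to $q_u(t)-\gamma(t)$ in the chart $z^{t_{i-1}}$, yields $\dsrs(q_u(t),\gamma(t))\le C\eps$ uniformly; on each sub-Riemannian portion the triangle inequality together with $\dsrs(\gamma(t_i),\gamma(t))\le C(t_i-t)^{1/k}$ and the crude bound $\eta^{1/s}+\costuno(u|_{\text{piece}})\le C\eps$ on the control-affine displacement in time $\eta$ gives the same estimate.
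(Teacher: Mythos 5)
Your proposal is correct, and it takes a somewhat different route from the paper's proof. The paper builds each elementary control as a near-minimizer of $\costdue$ for the time-dependent sub-Riemannian system (TD) obtained from the variation formula \eqref{eq:split}: it steers $\gamma(t_{i-1})$ to $\tilde\gamma_{t_{i-1}}(t_i)=e^{-(t_i-t_{i-1})\drift}\gamma(t_i)$ in the TD system, which by \eqref{eq:split} steers $\gamma(t_{i-1})$ to $\gamma(t_i)$ in \eqref{cs:d}; the cost is then controlled through $\val^\costdue_{\text{TD}}\le\dsrs$ from the reference [prandi2013, Thm.~8], combined with the coordinate relation \eqref{eq:changeofcoord} and Proposition~\ref{prop:normcoordpath}. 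You instead concatenate the null control (pure drift flow) with a short sub-Riemannian correction furnished by Proposition~\ref{prop:costole}. The per-piece estimate $C\delta^{1/\max\{s,k\}}$ lands in the same place by the same coordinate computation, so the asymptotics are identical. Your route has the slight disadvantage of producing a total cost $T+CT\delta^{1/m-1}$ rather than $\preccurlyeq T\delta^{1/m-1}$ directly, but as you observe this additive $T$ is harmless (and in fact it does not even require $m\ge 2$: already for $m=1$ the bound $\preccurlyeq\eps^{-m}$ survives, so that caveat in your write-up is unnecessary).

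Two minor technical points worth tightening. First, your key estimate $\dsrs(e^{\delta\drift}\gamma(t),\gamma(t+\delta))\le C\delta^{1/m}$ cannot be read off directly by ``feeding the coordinate differences into the pseudo-norm'': Theorem~\ref{thm:srballbox} controls only the distance from the chart center $\gamma(t)$. One should instead apply the triangle inequality through $\gamma(t)$, bounding $\dsrs(\gamma(t),e^{\delta\drift}\gamma(t))\le C\delta^{1/s}$ and $\dsrs(\gamma(t),\gamma(t+\delta))\le C\delta^{1/k}$ separately, then use $\delta^{1/s}+\delta^{1/k}\le 2\delta^{1/m}$. Second, in the $\cneig$ verification on the sub-Riemannian correction window you need a bound of the form $\dsrs(q_u(t_i-\eta),q_u(t))\le C(\eta^{1/s}+\costuno(u|_{\text{piece}}))$; this is exactly what \eqref{eq:bboxdrift} (Theorem~\ref{thm:bbox}) provides via the inclusion $\bdt{q}{\eps}\subset\Pi_T(C\eps)$, so you should invoke it explicitly. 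With those two points made precise, the argument is complete.
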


\begin{proof}
    By $(i)$ in Proposition~\ref{prop:srs}, follows that we only have to prove the upper bound for the complexities relative to the cost $\costdue$.
    We will start by proving \eqref{eq:compltimek} for $\ctimecostdue$.
    In particular, by \eqref{eq:complaux} it will suffices to prove $\ctimeauxcostdue(\gamma,\delta)\preccurlyeq \delta^{\frac 1 k}$

    Let $\{z^t\}_{t\in[0,T]}$ be a continuous coordinate family for $\gamma$ adapted to $\drift$.
    Let $\tilde \gamma_t(\xi)=e^{-(\xi-t)\drift}(\gamma(\xi))$. 
    Then,  since $z^t_*\drift=\partial_{z_\ell}$, it holds
    \begin{equation}
    	\label{eq:changeofcoord}
    	z_\ell^t(\tilde \gamma_t(\xi))=z_\ell^t(\gamma(\xi))-(\xi-t),\qquad z_i^t(\tilde \gamma_t(\xi))=z_i^t(\gamma(\xi)) \quad \text{for any }i\neq \ell.
    \end{equation}

    Fix $\xi>0$ sufficiently small for Proposition~\ref{prop:normcoordpath} to hold and choose a partition $0<t_1<\ldots<t_N=T$ such that $\delta/2\le t_i-t_{i-1} \le \delta$. 
    In particular, $N\le \lceil 2 T/\delta\rceil$.
    We then select a control $u\in\lcont$ such that its trajectory $q_u$ in \eqref{cs:d}, with $q_u(0)=x$, satisfies $q_u(t_i)=\gamma(t_i)$ for any $i=1,\ldots,N$ as follows.
    For each $i$, we choose $u_i\in\xLone([t_{i-1},t_i],\real^m)$ steering  system \eqref{cs:td} from $\gamma(t_{i-1})=\tilde\gamma_{t_{i-1}}(t_{i-1})$ to $\tilde \gamma_{t_{i-1}}(t_i)$.
    Then, by \eqref{eq:split} and the definition of $\tilde\gamma_{t_{i-1}}$, the control $u_i$ steers system \eqref{cs:d} from $\gamma(t_{i-1})$ to $\gamma(t_i)$.
    
    Since by \cite[Theorem~8]{prandi2013} it holds $\val^\costdue_{\text{TD}}\le\dsrs$, by \eqref{eq:changeofcoord}, Proposition~\ref{prop:normcoordpath} and Theorem~\ref{thm:srballbox}, if $\delta$ is sufficiently small we can choose $u_i$ such that there exists $C>0$ for which
    \begin{equation}
    	\label{eq:costduedelta}
    	\begin{split}
        		\costdue(u_i,t_i-t_{i-1})& \le C\sum_{j=1}^n |z_j^{t_{i-1}}(\tilde\gamma_{t_{i-1}}(t_i))|^{\frac 1 {w_j}} \le C\sum_{j=1}^n |z_j^{t_{i-1}}(\gamma(t_i))|^{\frac 1 {w_j}}+\delta^{\frac 1 s} \\
        	    	&\le  C \left(\sum_{w_j\le k} \delta^{\frac 1 {w_j}} +\delta^{\frac 1 s} +\sum_{\substack{w_j>k}} \delta^{\frac {1} {k}} \right) \le C\delta^{\frac 1 {\max\{k,s\}}}.
        \end{split}
    \end{equation}
    Hence, we obtain that
	\begin{equation}
		\label{eq:costdelta}
    	\costdue(u,T)\le N\, \costdue(u_i,t_i-t_{i-1})\le 3C \frac T \delta \delta^{\frac{1}{\max\{k,s\}}}.
	\end{equation}
    Since the control $u$ is admissible for $\ctimeauxcostdue(\gamma,\delta)$, this implies that $\ctimeauxcostdue(\gamma,\delta)\preccurlyeq \delta^{\frac 1 {\max\{k,s\}}}$.
    This proves the first part of the theorem.

    To complete the proof for $\cneig(\gamma,\eps)$, let $\delta = \eps^{\max\{k,s\}}$. 
    Then, by Theorems~\ref{thm:srballbox} and \ref{thm:bbox}, there exists a constant $C>0$ such that $\mathcal R^{\drift}_\delta(\gamma(t),\eps)\subset \bsr(\gamma(t),C\eps)$ for any $t\in[0,T]$.
    In particular, $\dsrs(\gamma(t_i),q_u(t))\le C\eps$ for any $t\in[t_{i-1},t_i]$.
    Moreover, again by Theorem~\ref{thm:srballbox}, Proposition~\ref{prop:normcoordpath}, and the fact that $\dot\gamma(\cdot)\in\Delta^k(\gamma(\cdot))$, this choice of $\delta$ implies also that $\dsrs(\gamma(t_{i-1}),\gamma(t))\le C\eps$ for any $t\in[t_{i-1},t_i]$.
    Hence, for any $t\in[t_{i-1},t_i]$, we get
    \[
    	\dsrs(\gamma(t),q_u(t))\le \dsrs(\gamma(t_{i-1}),q_u(t))+\dsrs(\gamma(t_{i-1}),\gamma(t))\le 2C\eps.
    \]
    Thus, $u$ is admissible for $\cneigcostdue(\gamma,C\eps)$.
    Finally, from \eqref{eq:costdelta} we get that $\cneigcostdue(\gamma,C\eps)\le \eps^{-1}\costdue(u,T)\le 3C T\eps^{-\max\{k,s\}}$, proving that $\cneig(\gamma,\eps)\preccurlyeq \eps^{-\max\{k,s\}}$.
    This completes the proof.
    \end{proof}

Now, we prove the $\succcurlyeq$ part of the statement, in the case where $\dot\gamma$ is always contained in the same stratum $\distr^k\setminus\distr^{k-1}$. 

\begin{prop}\label{prop:pathlowerbound}
	Assume that there exists $s\ge2$ such that $\drift\subset\distr^s\setminus\distr^{s-1}$.
	Let $\gamma:[0,T]\to M$ be a path, such that $\dot\gamma(t) \in \distr^k(\gamma(t))\setminus\distr^{k-1}(\gamma(t))$ for any $t\in[0,T]$.
	Moreover, if $s=k$, assume that $\drift(\gamma(t))\neq \dot\gamma(t) \mod\distr^{s-1}$ for any $t\in [0,T]$.
   	Then, it holds
	\[
		\ctimecostdue(\gamma,\eps)\succcurlyeq \ctimecostuno(\gamma,\eps)\succcurlyeq \frac 1 {\eps^{\max\{s,k\}}},\qquad 
		\cneigcostdue(\gamma,\eps) \succcurlyeq\cneigcostuno(\gamma,\eps) \succcurlyeq \frac 1 {\eps^{\max\{s,k\}}}.
	\]
\end{prop}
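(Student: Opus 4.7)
By Proposition~\ref{prop:srs}(i) it is enough to prove the lower bounds for the $\costuno$ complexities, with target $\eps^{-p}$ where $p=\max\{k,s\}$. The starting point is Proposition~\ref{prop:normproperty}: the hypothesis $\drift(\gamma(t))\neq\dot\gamma(t)\mod\distr^{s-1}$ (needed only when $s=k$, automatic otherwise) furnishes a continuous coordinate family $\{z^t\}_{t\in[0,T]}$ for $\gamma$ which is adapted to $\drift$ and simultaneously rectifies $\gamma$, namely $z^t_*\drift=\partial_{z_\ell}$ with $w_\ell=s$, and $z^t_*\dot\gamma=\partial_{z_\alpha}$ with $w_\alpha=k$ and $\alpha\neq\ell$. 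In these coordinates $z^t_\alpha(\gamma(t+\tau))=\tau$ and $z^t_j(\gamma(t+\tau))=0$ for every $j\neq\alpha$.

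For the interpolation by time complexity I would first invoke the equivalence \eqref{eq:complaux} to reduce the claim to $\ctimeauxcostuno(\gamma,\delta)\succcurlyeq\delta^{1/p}$. Given any admissible $\delta$-time interpolation $u$ with partition $0=t_0<\cdots<t_N=T$ satisfying $q_u(t_i)=\gamma(t_i)$ and $\tau_i:=t_i-t_{i-1}\le\delta$, set $\eps_i:=\costuno(u|_{[t_{i-1},t_i]})$. By the variation formula \eqref{eq:split} the same control steers \eqref{cs:td} from $\gamma(t_{i-1})$ to $e^{-\tau_i\drift}\gamma(t_i)$ with $L^1$-cost $\eps_i$. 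Since the time-dependent vector fields $(e^{-t\drift})_*f_i$ coincide with $f_i$ at $t=0$ and depend smoothly on $t$, a standard Gronwall-plus-Ball-Box perturbation argument produces the key comparison $\dsrs(\gamma(t_{i-1}),e^{-\tau_i\drift}\gamma(t_i))\le C\eps_i$ for bounded time horizon. Reading this sub-Riemannian distance bound through Theorem~\ref{thm:srballbox} in the coordinates $z^{t_{i-1}}$, and using that $z^{t_{i-1}}_\ell(e^{-\tau_i\drift}\gamma(t_i))=-\tau_i$ and $z^{t_{i-1}}_\alpha(e^{-\tau_i\drift}\gamma(t_i))=\tau_i$, gives simultaneously $\tau_i\le(C\eps_i)^s$ and $\tau_i\le(C\eps_i)^k$, hence the per-interval estimate $\eps_i\ge c\tau_i^{1/p}$. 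Since $1/p-1\le 0$ and $\tau_i\le\delta$, summing yields $\costuno(u,T)=\sum_i\eps_i\ge c\delta^{1/p-1}\sum_i\tau_i=cT\delta^{1/p-1}$, so $\ctimeauxcostuno(\gamma,\delta)=\delta\,\costuno(u,T)\succcurlyeq\delta^{1/p}$ as required.

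For the neighboring approximation complexity I would split on the sign of $k-s$. If $k\ge s$, so that $p=k$, Proposition~\ref{prop:srs}(ii) together with Theorem~\ref{thm:curvegeneral} applied to $\Gamma=\gamma([0,T])$ (whose tangent satisfies $T_p\Gamma\in\distr^k\setminus\distr^{k-1}$) directly yields $\cneigcostuno(\gamma,\eps)\ge\cappcostuno(\Gamma,\eps)\succcurlyeq\eps^{-k}$. If $k<s$, so that $p=s$, I would repeat the per-interval argument above on a uniform partition of step $\delta:=(C'\eps)^s$ with $C'$ large: the tubular admissibility $q_u(t_i)\in\bsr(\gamma(t_i),\eps)$ adds a triangle-inequality error of order $\eps$ to the time-dependent step, producing $\dsrs(\gamma(t_{i-1}),e^{-\delta\drift}\gamma(t_i))\le C(\eps+\eps_i)$ and hence $\delta\le(C(\eps+\eps_i))^s$; choosing $C'$ large enough absorbs the $\eps$-term and gives $\eps_i\ge c\delta^{1/s}/2$. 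Summing over the $N\asymp T/\delta$ pieces yields $\costuno(u,T)\succcurlyeq\eps^{-(s-1)}$, whence $\cneigcostuno(\gamma,\eps)\succcurlyeq\eps^{-s}$. The delicate point throughout is the perturbative comparison between the time-dependent and the static sub-Riemannian metrics at bounded time horizon: it is precisely the mechanism by which the drift stratum $s$ enters the exponent, and without it Theorem~\ref{thm:bbox} alone would yield only $\eps_i\ge c\tau_i^{1/k}$, which is too weak when $k<s$.
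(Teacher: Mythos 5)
The approach runs into two genuine gaps. The first and most serious is the ``Gronwall-plus-Ball-Box perturbation argument'' that is supposed to produce the comparison $\dsrs(\gamma(t_{i-1}),e^{-\tau_i\drift}\gamma(t_i))\le C\eps_i$. This is exactly the hard step, and the sketch does not establish it. A Gronwall estimate comparing the trajectory of the time-dependent system \eqref{cs:td} with the trajectory of the frozen system \eqref{cs:srs} under the same control only gives a bound on the \emph{Euclidean} displacement, of order $\tau_i\eps_i$. The displacement direction is essentially $t[\drift,f_j]$, which has nonholonomic order $\le -(s+1)$, so converting that Euclidean bound to a sub-Riemannian one costs a $1/r$ power (Ball-Box): one only gets $\dsrs\lesssim\eps_i+(\tau_i\eps_i)^{1/r}$, which is not $\lesssim\eps_i$ unless one already knows $\tau_i\lesssim\eps_i^{r-1}$ — but that is strictly stronger than the bound $\tau_i\lesssim\eps_i^{\max\{s,k\}}$ you are trying to prove (and generally false). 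The paper avoids this circularity entirely by reading off the position of $\gamma(t_i)$ directly from the drifted Ball-Box estimate (Theorem~\ref{thm:bbox}) together with the time-versus-cost bound of Lemma~\ref{lem:stimatau}: since in the rectified coordinates $z^{t_{i-1}}_\ell(\gamma(t_i))=0\le 0$, \eqref{eq:bboxdriftnegative} gives $\tau_i=|z_\alpha|\le(C\eps_i)^k$, and Lemma~\ref{lem:stimatau} gives $\tau_i\le C\eps_i^s$ outright, with no detour through the static sub-Riemannian distance.

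The second gap is specific to the case $s=k$, which you handle by invoking the simultaneous rectification of Proposition~\ref{prop:normproperty}. Its construction completes $f_\alpha$ (with $f_\alpha(\gamma(t))=\dot\gamma(t)$) and $\drift$ to an adapted frame, which needs $\drift$ and $\dot\gamma$ to be \emph{linearly independent} modulo $\distr^{s-1}$. But the hypothesis of the proposition is only that $\drift(\gamma(t))\neq\dot\gamma(t)\mod\distr^{s-1}$; it permits, e.g., $\drift\equiv 2\dot\gamma\mod\distr^{s-1}$, in which case no adapted frame containing both exists and there is no coordinate system with $z_\ell(\gamma(\cdot))\equiv0$. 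This is precisely why the paper does not use Proposition~\ref{prop:normproperty} in Case 3 ($s=k$) but instead works with a coordinate family adapted only to $\drift$ and appeals to Lemma~\ref{lem:partizione}: the decomposition $[0,T]=E_1\cup E_2\cup E_3$ according to the size of the drift component $\varphi_\ell$ of $\dot\gamma$, combined with the mean-value identity \eqref{eq:zellmean}, supplies a uniform transverse direction on $E_2$ (where $\varphi_\ell\approx1$) and a $z_\ell$-defect of definite sign on $E_1,E_3$. Your sketch has no mechanism to cope with the subcase $\varphi_\ell\approx 1$.

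The reduction to $\costuno$ via Proposition~\ref{prop:srs}(i), the reformulation \eqref{eq:complaux}, and the use of Proposition~\ref{prop:srs}(ii) with Theorem~\ref{thm:curvegeneral} to handle $\cneig$ when $k\ge s$ are all correct and match the paper's structure (the last point is a nice shortcut for that subcase). But the two issues above leave the core of the lower bound unproved when $k<s$ and when $k=s$, and the Gronwall step is a gap even in the easy case $k>s$.
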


\begin{proof}
	By Proposition~\ref{prop:srs}, $\ctimecostuno(\gamma,\eps)\preccurlyeq \ctimecostdue(\gamma,\eps)$ and $\cneigcostuno(\gamma,\eps)\preccurlyeq \cneigcostdue(\gamma,\eps)$.
	Hence, to complete the proof it suffices to prove the asymptotic lower bound for $\ctimecostuno(\gamma,\eps)$ and $\cneigcostuno(\gamma,\eps)$.
	In the following, to lighten the notation, we write $\ctime$ and $\cneig$ instead of $\ctimecostuno$ and $\cneigcostuno$.

	\textbf{Interpolation by time complexity.}
	By \eqref{eq:complaux}, it suffices to prove that $\ctimeaux(\gamma,\delta)\succcurlyeq\delta^{\frac{1}{\max\{k,s\}}}$
	Let  $\eta>0$ and $u\in \lcont$ be a control admissible for $\ctimeaux(\gamma,\delta)$ such that 
	\begin{equation}
		\label{eq:defu}
		\costuno(u,T)=\|u\|_{\lcont}\le \frac{ \ctimeaux(\gamma,\delta) } \delta+ \eta.
	\end{equation}
	Let $N=\lceil T/\delta \rceil$ and $0=t_0<t_1<\ldots<t_N=T$ be times such that $q_u(t_i)=\gamma(t_i)$, $i=0,\ldots,N$, and $0<t_i-t_{i-1}\le \delta$.
	Observe that, up to removing some $t_i$'s, we can always assume $\delta/2\le t_{i}-t_{i-1}\le (3/ 2 )\delta$ and $N\ge \lceil (2 T)/(3\delta) \rceil$. 
	Moreover, let $u_i=u|_{[t_{i-1},t_i]}$. 
	Proceding as in the proof of Theorem~\ref{thm:srtimecomplexity}, p.~\pageref{pf:srtimecomplexity}, we get that 
	in order to show that $\ctimeaux(\gamma,\delta)\succcurlyeq \delta^{\frac 1 {\max\{k,s\}}}$ it suffices to prove
	\begin{equation}
		\label{eq:claimdelta}
		\|u_i\|_{\xLone([t_{i-1},t_i])}\ge C \delta^{\frac 1 {\max\{s,k\}}},\qquad i=1,\ldots, N.
	\end{equation}
	We distinguish three cases.

	\begin{description}
		\item[Case 1 $k>s$]
		Let $\{z^t\}$ be the continuous coordinate family for $\gamma$ adapted to $\drift$ given by Proposition~\ref{prop:normproperty}.
		Then, since $z^t_\ell(\gamma(\cdot))=0$ and $z^t_\alpha(\gamma(\xi))=\xi-t$, by Theorem~\ref{thm:bbox} it holds
		\begin{equation}\label{eq:ciao}
			\frac \delta 2 \le (t_i-t_{i-1}) = |z^{t_{i-1}}_\alpha(\gamma(t_{i}))| 
		 			\le C \|u_i\|_{\xLone([t_{i-1},t_i])}^k.
		\end{equation}
		This proves \eqref{eq:claimdelta}.

		\item[Case 2 $k < s$]
		Also in this case, let $\{z^t\}$ be the continuous coordinate family for $\gamma$ adapted to $\drift$ given by Proposition~\ref{prop:normproperty}.
		Then, by Lemma~\ref{lem:stimatau} we get 
		\begin{equation*}
			\frac \delta 2 \le t_i-t_{i-1}\le C\|u_i\|_{\xLone([t_{i-1},t_i])}^s,
		\end{equation*}	
		which immediately proves \eqref{eq:claimdelta}.

		\item[Case 3 $k=s$]
		Let $\{z^t\}_{t\in[0,T]}$ be a continuous coordinate family for $\gamma$ adapted to $\drift$.
		By the mean value theorem there exists $\xi\in[t_{i-1},t_i]$ such that
		\begin{equation}
			\label{eq:zellmean}
			z_\ell^{t_{i-1}}(\gamma(t_i))=\int_{t_{i-1}}^{t_i} (z^t_\ell)_*\dot\gamma(t)\,dt = \big((z^{t_{i-1}}_\ell)_*\dot\gamma(\xi)\big) (t_i-t_{i-1}).
		\end{equation}	
		Consider the partition $\{E_1,E_2,E_3\}$ of $[0,T]$ given by Lemma~\ref{lem:partizione} and let  $\delta\le \delta_0$.
		Then, depending to which $E_j$ belongs $t_{i-1}$, we proceed differently.
		\begin{enumerate}
			\renewcommand{\labelenumi}{(\alph{enumi})}
			\renewcommand{\theenumi}{(\alph{enumi})}
			\item \emph{$t_{i-1}\in E_1$:}
			By Lemma~\ref{lem:stimatau} and \eqref{eq:zellmean} we get
			\begin{equation*}
			    t_i-t_{i-1} \le C\|u_i\|_{\xLone([t_{i-1},t_i])}^s + z_\ell^{t_{i-1}}(\gamma(t_i))^+ = C\|u_i\|_{\xLone([t_{i-1},t_i])}^s + \big((z^{t_{i-1}}_\ell)_*\dot\gamma(\xi)\big) (t_i-t_{i-1}).
			\end{equation*}
			Then, by \eqref{eq:prope1} of Lemma~\ref{lem:partizione}, we get
			\[
				\|u_i\|_{\xLone([t_{i-1},t_i])} \ge \left(\frac{1-(z^{t_{i-1}}_\ell)_*\dot\gamma(\xi)}C \right)^{\frac 1 s} (t_i-t_{i-1})^{\frac 1 s}\ge \left(\frac \rho C\right)^{\frac 1 s} \delta^{\frac 1 s}.
			\]
			This proves \eqref{eq:claimdelta}.

			\item \emph{$t_{i-1}\in E_2$:}
			By \eqref{eq:prope2} of Lemma~\ref{lem:partizione}, \eqref{eq:zellmean} and Theorem~\ref{thm:bbox} we get
			\begin{equation*}
				m(t_i-t_{i-1}) \le |z^{t_{i-1}}_\alpha(\gamma(t_{i}))| 
		 			\le C \left(\|u_i\|_{\xLone([t_{i-1},t_i])}^s + \|u_i\|_{\xLone([t_{i-1},t_i])} |z_\ell^{t_{i-1}}(\gamma(t_i))| \right).
			\end{equation*}
			Reasoning as in \eqref{eq:costduedelta} yields that we can assume $\|u_i\|_{\xLone([t_{i-1},t_i])}\le C\delta^{\frac 1 {s}}$.
			Then, by \eqref{eq:zellmean} and letting $\delta\le (m/(2+4\rho))^s$, we get
			\begin{equation*}
				\|u_i\|_{\xLone([t_{i-1},t_i])} \ge (m-\delta^{\frac 1 s}(1+2\rho))^{\frac 1 s}(t_i-t_{i-1})^{\frac 1 s}\ge \left(\frac m 2\right)^{\frac 1 s} \delta^{\frac 1 s},		
			\end{equation*}
			proving \eqref{eq:claimdelta}.

			\item \emph{$t_{i-1}\in E_3$:}
			By Theorem~\ref{thm:bbox} it follows that 
			\begin{equation}
				\label{eq:zelldelta}
				|z_\ell^{t_{i-1}}(\gamma(t_i))|\le C\|u_i\|_{\xLone([t_{i-1},t_i])}^s+(t_i-t_{i-1}).
			\end{equation}
			Then, by \eqref{eq:zellmean} and \eqref{eq:zelldelta} we obtain
			\[
				\|u_i\|_{\xLone([t_{i-1},t_i])}\ge \left(\frac{(z^{t_{i-1}}_\ell)_*\dot\gamma(\xi)-1}C \right)^{\frac 1 s} (t_i-t_{i-1})^{\frac 1 s}\ge \left(\frac \rho C\right)^{\frac 1 s} \delta^{\frac 1 s}.
			\]
			The last inequality follows from \eqref{eq:prope3} of Lemma~\ref{lem:partizione}.
			This proves \eqref{eq:claimdelta}.
		\end{enumerate}
	\end{description}

	\textbf{Neighboring approximation complexity.}
	Fix $\eta>0$ and let $u\in\lcont$ be admissible for $\cneig(\gamma,\eps)$ and such that $\|u\|_{\lcont}\le \cneig(\gamma,\eps)+\eta$.
	Let $q_u:[0,T]\to M$ be the trajectory of $u$ with $q_u(0)=\gamma(0)$.
	Let then $N=\lceil \cneig(\gamma,\eps)+\eta\rceil$ and $0=t_0<t_1<\ldots<t_N=T$ be such that $\|u\|_{\xLone([t_{i-1},t_i])}\le \eps$ for any $i=1,\ldots, N$.
	By Proposition~\ref{prop:costole} and the fact that $q_u(t)\in\bsr(\gamma(t),\eps)$ for any $t\in[0,T]$, we can build a new control, still denoted by $u$, such that $q_u(t_i)=\gamma(t_i)$, $i=1,\ldots, N$, and $\|u\|_{\xLone([t_{i-1},t_i])}\le 3\eps$.

	Fixed a $\delta_0>0$, w.l.o.g.\ we can assume that $t_i-t_{i-1}\le\delta_0$.
	In fact, we can split each interval $[t_{i-1},t_i]$ not satisfying this property as $t_{i-1}=\xi_1<\ldots<\xi_M=t_i$, with $\xi_\nu-\xi_{\nu-1}\le\delta_0$.
	Then, as above, it is possible to modify the control $u$ so that $q_u(\xi_\nu)=\gamma(\xi_\nu)$ for any $\nu=1,\ldots,M$.
	Since $M\le\lceil T/\delta_0\rceil$ and $q_u(\cdot)\in\bsr(\gamma(\cdot),\eps)$, we have $\|u\|_{\xLone([\xi_i,\xi_{i-1}])}\le 5\eps$ and the new total number of intervals is $\le (1+\lceil T/\delta_0\rceil) \lceil\cneig(\gamma,\eps)+\eta\rceil\le C (\cneig(\gamma,\eps)+\eta)$.

	We claim that to prove $\cneig(\gamma,\eps)\succcurlyeq \eps^{-\max\{s,k\}}$, it suffices to show that there exists a constant $C>0$, independent of $u$, such that
	\begin{equation}
		\label{eq:cneigsuff}
		t_i-t_{i-1}\le C\eps^{\max\{s,k\}}, \qquad\text{for any } i=1,\ldots,N.
	\end{equation}
	In fact, since $N\le C (\cneig(\gamma,\eps)+\eta)$, this will imply that 
	\[
		T=\sum_{i=1}^N t_i-t_{i-1}\le C(\cneig(\gamma,\eps)+\eta)\eps^{\max\{s,k\}}.
	\]
	Letting $\eta\downarrow 0$, we get that $\cneig(\gamma,\eps)\succcurlyeq \eps^{-\max\{s,k\}}$, proving the claim.

	We now let $\delta_0$ sufficiently small in order to apply Lemma~\ref{lem:partizione}, Theorem~\ref{thm:bbox}, and Lemma~\ref{lem:stimatau}.
	As before, we distinguish three cases.

	\begin{description}
		\item[Case 1 $k>s$]
		Let $\{z^t\}$ be the continuous coordinate family for $\gamma$ adapted to $\drift$ given by Proposition~\ref{prop:normproperty}.
		By Theorem~\ref{thm:bbox}, using the fact that $\gamma(t_i)=q_u(t_i)$ for $i=1,\ldots,N$, we get
		\begin{equation}
			\label{eq:timeneig}
			\begin{split}
				(t_i-t_{i-1}) = |z_\alpha^{t_{i-1}}(\gamma(t_i))| \le C  \eps^k.
			\end{split}	
		\end{equation}
		This proves \eqref{eq:cneigsuff}.

		\item[Case 2 $k < s$]
		Again, let $\{z^t\}$ be the continuous coordinate family for $\gamma$ adapted to $\drift$ given by Proposition~\ref{prop:normproperty}.
		As for the interpolation by time complexity,  by Lemma~\ref{lem:stimatau} and the fact that $q_u(t_i)=\gamma(t_i)$, we get 
		\[
			(t_i-t_{i-1})  \le C \eps^s,
		\]
		thus proving \eqref{eq:cneigsuff}.

		\item[Case 3 $k=s$]
		Let $\{z^t\}_{t\in[0,T]}$ to be a continuous coordinate family for $\gamma$ adapted to $\drift$. 
		Consider the partition $\{E_1,E_2,E_3\}$ of $[0,T]$ given by Lemma~\ref{lem:partizione} and recall \eqref{eq:zellmean}.
		We distinguish three cases.
		\begin{enumerate}
			\renewcommand{\labelenumi}{(\alph{enumi})}
			\renewcommand{\theenumi}{(\alph{enumi})}
			\item \emph{$t_{i-1}\in E_1$:}
			By Lemma~\ref{lem:stimatau} and \eqref{eq:zellmean} we get
			\begin{equation*}
			    t_i-t_{i-1} 
			    	\le C\eps^s + z_\ell^{t_{i-1}}(\gamma(t_i)) = 2C\eps^s + \big((z^{t_{i-1}}_\ell)_*\dot\gamma(\xi)\big) (t_i-t_{i-1}).
			\end{equation*}
			By \eqref{eq:prope1} of Lemma~\ref{lem:partizione}, this implies
			\[
				t_i-t_{i-1} \le \left(\frac{2C}{1-(z^{t_{i-1}}_\ell)_*\dot\gamma(\xi)} \right)\eps^s \le \frac {2C}\rho \eps^s.
			\]
			Hence, \eqref{eq:cneigsuff} is proved.

			\item \emph{$t_{i-1}\in E_2$:}
			By \eqref{eq:prope2} of Lemma~\ref{lem:partizione}, \eqref{eq:zellmean} and Theorem~\ref{thm:bbox} we get
			\begin{equation*}
					m(t_i-t_{i-1}) \le |z^{t_{i-1}}_\alpha(\gamma(t_{i}))| 
						\le C \left(\eps^s + \eps |z_\ell^{t_{i-1}}(\gamma(t_i))| \right)\le C \left(\eps^s + \eps^{s+1} +\eps (t_i-t_{i-1}) \right).
			\end{equation*}
			This, by taking $\eps$ sufficiently small and enlarging $C$, implies \eqref{eq:cneigsuff}.

			\item \emph{$t_{i-1}\in E_3$:}
			By Theorem~\ref{thm:bbox} it follows that 
			\begin{equation}
				\label{eq:zelldeltaneig}
				|z_\ell^{t_{i-1}}(\gamma(t_i))|\le C\eps^s+(t_i-t_{i-1}).
			\end{equation}
			Then, by \eqref{eq:zellmean} and \eqref{eq:zelldeltaneig} we obtain
			\[
				t_i-t_{i-1}\le \frac C {(z^{t_{i-1}}_\ell)_*\dot\gamma(\xi)-1}\eps^s \le \frac C \rho \eps^s.
			\]
			The last inequality follows from \eqref{eq:prope3} of Lemma~\ref{lem:partizione}, and proves \eqref{eq:cneigsuff}.
		\end{enumerate}

	\end{description}
\end{proof}

As for the case of curves, in order to extend Proposition~\ref{prop:pathlowerbound} to paths not always tangent to the same strata, we will need the following sub-additivity property.
Let us remark that due to the definition of the path complexities, we do not need to make any assumption regarding cusps.

\begin{prop}
	\label{prop:pathsemicont}
    Let $\gamma:[0,T]\to M$ be a path and let $t_1,\,t_2\subset [0,T]$.  
    \begin{enumerate}[i.]
    \item If there exists $k\in\nat$ such that $\ctimecostuno(\gamma|_{[t_1,t_2]},\eps)\succcurlyeq \eps^{-k}$, then $\ctimecostuno(\gamma,\eps)\succcurlyeq \eps^{-k}$,
    \item $\cneigcostuno(\gamma|_{[t_1,t_2]},\eps) \preccurlyeq \cneigcostuno(\gamma,\eps)$.
    \end{enumerate}
\end{prop}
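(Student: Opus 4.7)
The plan is to establish both parts via a bridge construction: given a control admissible for a complexity of $\gamma$, I will splice in short sub-Riemannian bridges at the two endpoints $\gamma(t_1)$ and $\gamma(t_2)$ to produce a control admissible for the corresponding complexity of $\gamma|_{[t_1,t_2]}$. These bridges will be supplied by Proposition~\ref{prop:costole}, which provides a control of $\costuno$-cost at most $\dsrs$ in arbitrarily small time.

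For (i), I will work through the auxiliary complexity $\ctimeauxcostuno$ via the equivalence \eqref{eq:complaux}. Fix a near-optimal $\delta$-time interpolation $u$ of $\gamma$. The $\delta$-interpolation property produces touching times $s_1 \in [t_1, t_1 + \delta]$ and $s_2 \in [t_2 - \delta, t_2]$ for $\delta$ small enough. Define $v$ on $[t_1, t_2]$ by concatenating a bridge from $\gamma(t_1)$ to $\gamma(s_1)$ on $[t_1, s_1]$; the segment $u|_{[s_1, s_2]}$; and a bridge from $\gamma(s_2)$ to $\gamma(t_2)$ on $[s_2, t_2]$. By Proposition~\ref{prop:costole} and the sub-Riemannian Ball-Box Theorem applied to the smoothness of $\gamma$, each bridge has $\costuno$-cost at most $C \delta^{1/r}$, where $r$ is the SR step. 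The control $v$ is a $\delta$-time interpolation of $\gamma|_{[t_1,t_2]}$, since all consecutive touching times of $v$ remain within $\delta$ of each other, and $\delta\,\costuno(v,t_2-t_1) \le \delta\,\costuno(u,T) + 2C\delta^{1+1/r}$. Taking the infimum over $u$ yields $\ctimeauxcostuno(\gamma|_{[t_1,t_2]},\delta) \le \ctimeauxcostuno(\gamma,\delta) + 2C\delta^{1+1/r}$. Since $k \ge 1$ forces $1+1/r > 1 \ge 1/k$, the error term is $\smallo(\delta^{1/k})$ as $\delta\downarrow 0$, so the hypothesis $\ctimeauxcostuno(\gamma|_{[t_1,t_2]},\delta) \succcurlyeq \delta^{1/k}$ transfers to $\ctimeauxcostuno(\gamma,\delta) \succcurlyeq \delta^{1/k}$, which by \eqref{eq:complaux} is equivalent to $\ctimecostuno(\gamma,\eps) \succcurlyeq \eps^{-k}$.

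For (ii), the bridges must additionally stay inside the sub-Riemannian tube $\bsr(\gamma(\cdot), O(\eps))$. Given $u$ admissible for $\cneigcostuno(\gamma,\eps)$, I will set $\rho = \eps^r$ and define $v$ on $[t_1, t_2]$ by: on $[t_1, t_1+\rho]$, a bridge from $\gamma(t_1)$ to $q_u(t_1+\rho)$; $u|_{[t_1+\rho, t_2-\rho]}$ in the middle; and on $[t_2-\rho, t_2]$, a bridge from $q_u(t_2-\rho)$ to $\gamma(t_2)$. By Proposition~\ref{prop:costole} and the triangle inequality for $\dsrs$, each bridge has $\costuno$-cost at most $\dsrs(\gamma(t_1), \gamma(t_1+\rho)) + \dsrs(\gamma(t_1+\rho), q_u(t_1+\rho)) \le C\rho^{1/r} + \eps = O(\eps)$, so $\costuno(v,t_2-t_1) \le \costuno(u,T) + O(\eps)$. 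Dividing by $\eps$ and passing to the infimum yields $\cneigcostuno(\gamma|_{[t_1,t_2]},C\eps) \le C^{-1}\cneigcostuno(\gamma,\eps) + O(1)$, which gives the stated weak asymptotic estimate (the constant factor in $\eps$ being absorbed by the polynomial scaling of the complexities).

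The main obstacle is verifying that the bridge trajectories in (ii) actually lie in $\bsr(\gamma(t), O(\eps))$, since Proposition~\ref{prop:costole} controls only the $\costuno$-cost, not the sub-Riemannian geometry of the trajectory. I will resolve this via the control-affine Ball-Box Theorem (Theorem~\ref{thm:bbox}): the bridge trajectory is contained in the reachable set $\bdt{\gamma(t_1)}{O(\eps)}$, which in turn sits inside $\Pi_\rho(O(\eps))$. With the choice $\rho = \eps^r$, the inequality $s \le r$ (which holds since $\drift\in\distr^s\subset\distr^r=TM$) forces every coordinate of this box to satisfy $|z_i|\le C\eps^{w_i}$, placing it inside $\bsr(\gamma(t_1), O(\eps))$ by the sub-Riemannian Ball-Box Theorem. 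Combining with $\gamma(t) \in \bsr(\gamma(t_1), O(\rho^{1/r})) = \bsr(\gamma(t_1), O(\eps))$ for $t\in[t_1, t_1+\rho]$, the triangle inequality in $\dsrs$ places $q_v(t)$ inside $\bsr(\gamma(t), O(\eps))$, closing the argument.
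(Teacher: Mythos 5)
Your proof is correct and follows essentially the same bridging strategy as the paper: splice short connecting controls at the two endpoints and control their cost via the sub-Riemannian Ball-Box Theorem and the estimate $\val^\costuno\le\dsrs$ (Theorem~\ref{thm:cont}, or equivalently Proposition~\ref{prop:costole}). Your treatment of (i) is in fact slightly cleaner logically than the paper's: you make explicit that the additive error term $C\delta^{1+1/r}$ is $\smallo(\delta^{1/k})$, whereas the paper phrases the reduction as proving $\ctimeauxcostuno(\gamma|_{[t_1,t_2]},\delta)\preccurlyeq\ctimeauxcostuno(\gamma,\delta)$, which the displayed inequality there does not literally establish without the same $\smallo$-argument you use.

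The genuine difference is in part (ii). The paper's proof simply declares it ``identical'' to the tubular-approximation case of Proposition~\ref{prop:semicont}, which in turn invokes $\val^\costuno\le\dsrs$ and refers back to the cost-interpolation argument; it never verifies that the spliced-in bridge trajectories remain inside the moving ball $\bsr(\gamma(t),O(\eps))$ during their traversal --- a constraint that for $\cneig$ is pointwise in $t$ and not automatic. You correctly identify this as the real obstacle and close it using the control-affine Ball-Box inclusion $\bdt{\gamma(t_1)}{O(\eps)}\subset\Pi_\rho(O(\eps))$ from Theorem~\ref{thm:bbox}, together with the choice $\rho=\eps^r$ and the observation $s\le r$, reducing $\Pi_\rho(O(\eps))$ to an $O(\eps)$-box. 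This is a legitimate completion of an argument the paper leaves implicit; the price you pay is that, via Theorem~\ref{thm:bbox}, your proof implicitly invokes Hypothesis~\ref{hyp:h0} (some fixed $s$ with $\drift\subset\distr^s\setminus\distr^{s-1}$), which does not appear in the bare statement of the proposition --- though it does hold throughout Section~\ref{sec:complexity_of_paths_under_h_s_}, where the proposition is actually used (in the proof of Theorem~\ref{thm:pathgeneral}), so this is harmless in context.
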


\begin{proof}
	\emph{Time interpolation complexity.}
    By (\ref{eq:complaux}), it suffices to prove that 	$\ctimeauxcostuno(\gamma|_{[t_1,t_2]},\delta) \preccurlyeq \ctimeauxcostuno(\gamma,\delta)$.
	Let $u\in\lcont$ be a control admissible for $\ccostcostuno(\Gamma,\eps)$, and let $0=\xi_1<\ldots<\xi_N=T$ be the times where $q_u(\xi_i)=\gamma(\xi_i)$.
	Let ${i_1}\neq i_2$ such that $t_1\le \xi_i \le t_2$ for any $i\in \{i_1,\ldots,i_2\}$.
	Observe that, by Theorems~\ref{thm:srballbox} and \ref{thm:cont}, we have $\val^\costuno(\gamma(t_1),\gamma(\xi_{i_1}))\le \dsrs(\gamma(t_1),\gamma(\xi_{i_1}))\le C\delta^{\frac 1 r}$ and $\val^\costuno(\gamma(\xi_{i_2}),\gamma(t_2))\le \dsrs(\gamma(\xi_{i_2}),\gamma(t_2))\le C\delta^{\frac 1 r}$, where $\delta$ is sufficiently small, $C$ is independent of $\delta$, and $r$ is the nonholonomic degree of the distribution.
	Thus, assuming w.l.o.g. $C\ge1$,
	\begin{equation*}
		\ctimeauxcostuno(\gamma|_{[t_1,t_2]},\delta)\le \delta {\costuno(u|_{[t_{i_1},t_{i_2}]})}  +2C \delta^{1+\frac 1 r} \le C \delta {\costuno(u)} +C \delta^{1+\frac 1 r}.
	\end{equation*}
	Taking the infimum over all controls $u$ admissible for $\ctimeauxcostuno(\gamma,\delta)$, and recalling that, by Proposition~\ref{prop:pathupperbound}, it holds $\ctimeauxcostuno(\gamma,\delta)\preccurlyeq \delta^{\frac 1 r}$, completes the proof.

	\emph{Neighboring approximation complexity.} 
	In this case, the proof is identical to the one of Proposition~\ref{prop:semicont} for the tubular approximation complexity.
	The sole difference is that here, by definition of $\cneigcostuno$, we do not need to assume the absence of cusps.
\end{proof}

We can now complete the proof of Theorem~\ref{thm:driftcomplexity}, by proving Theorem~\ref{thm:pathgeneral}.

\begin{proof}
    The proof is analogous to the one of Theorem~\ref{thm:curvegeneral}, using Propositions~\ref{prop:pathupperbound}, \ref{prop:pathlowerbound} and \ref{prop:pathsemicont}.
\end{proof}

\bibliographystyle{amsplain}
\bibliography{complexity}

\end{document}